\newtheorem{thm}{Theorem}[section]
\newtheorem{proposition}{Proposition}[section]
\newtheorem{lemma}{Lemma}[section]
\newtheorem{corollary}{Corollary}[section]
\theoremstyle{Definition}
\newtheorem{definition}{Definition}[section]
\newtheorem{exa}{Example}[section]
\newtheorem{claim}{Claim}[section]
\DeclareMathOperator{\dist}{dist}
\DeclareMathOperator{\dive}{div}
\DeclareMathOperator{\tra}{tr}
\DeclareMathOperator{\curl}{curl}
\DeclareMathOperator{\supp}{supp}
\DeclareMathOperator{\capa}{cap}
\DeclareMathOperator{\inte}{int}
\DeclareMathOperator{\ang}{ang}
\def\R{\mathbb R}
\def\C{\mathbb C}
\def \H {\mathcal{H}}
\def \e {\varepsilon}
\def \h {h_{\mu}}
\def \T {T_{\mu}}
\author{Rémy Rodiac}
\title[Regularity of some stationary harmonic functions]{Regularity properties of stationary harmonic functions whose Laplacian is a Radon measure}
\address{D\'epartement de Math\'ematiques, Universit\'e Paris-Est-Cr\'eteil, 61 avenue du G\'en\'eral de Gaulle, 94010 Cr\'eteil Cedex, France}
\email{  remy.rodiac@u-pec.fr}
\date{}
\begin{document}

\begin{abstract}
We study the regularity of Radon measures $\mu$ which satisfy that there exists a function $\h$ in $H^1(\Omega)$, 
stationary harmonic such that $\Delta \h =\mu$ in $\Omega$ (here $\Omega$ is an open set of $\R^2$). Such conditions appear in physical contexts such as the study of 
a limiting vorticity measure associated to a family $(u_\e)_\e$ of solutions of the Ginzburg-Landau system 
without magnetic field. Under these conditions we prove that locally there exists a harmonic function $H$ such 
that the support of the measure is contained in the set of zeros of $H$. Using the local structure of the set of 
zeros of harmonic functions we can thus obtain that locally the support of $\mu$ is a union of smooth simple 
curves.
\end{abstract}
\keywords{Stationary harmonic functions, Radon measure, Ginzburg-Landau system, Euler system, System of point vortices}
\subjclass{Primary 58E50; Secondary 35J20-82D55}
\maketitle
\section{Introduction and main results}

	Stationary harmonic functions arise in many physical problems such as the study of Ginzburg-Landau equations linked to superconductivity or the study of Euler equations in fluid mechanics. They are also related to limiting vorticities of stationary system of point vortices. Let $\Omega$ be a bounded open set in $\R^2$. 

\begin{definition}
A function $h$ in $H^1(\Omega)$ is \textit{stationary harmonic} if $\dive T_h=0$ in $\Omega$ in the sense of distributions, where $T_h$ is the stress-energy tensor associated to the Dirichlet energy, defined by 
\begin{equation} \label{stress-energy}
T_h=\begin{pmatrix}
\frac{1}{2}\left[(\partial_y h) ^2-(\partial_x h^2)\right] & -\partial_x h \partial_y h \\
-\partial_x h \partial_y h & \frac{1}{2}\left[(\partial_x h) ^2-(\partial_y h^2)\right]
\end{pmatrix}.
\end{equation}
Equivalently $h$ is stationary harmonic in $\Omega$ if 
\begin{equation}\label{2bis}
\omega_{h}:= (\partial_x h) ^2 -(\partial_y h)^2 -2i\partial_x h \partial_y h \text{ is holomorphic in } \Omega.
\end{equation}
\end{definition}

%\textbf{Remark:} The quantity $\omega_{h}$ reminds the Hopf differential used in the context of harmonic maps or 
%minimal surfaces. \\

\noindent Equation \eqref{stress-energy} means that $\partial_x (T_h)_{i1} +\partial_y(T_h)_{i2}=0$ for  $i=1,2$ in the sense of distributions.
 Let us denote by  $H^{-1}(\Omega)$ the dual of the Sobolev space $H_0^{1}(\Omega)$. The aim of this paper is to describe the local regularity of Radon measures $\mu$ which satisfy the following conditions:

\begin{equation}\label{1}
\mu \in H^{-1}(\Omega),
\end{equation} 

\noindent there exists a function $\h$ such that 

\begin{equation}\label{3}
\Delta \h = \mu \ \text{in} \ \Omega,
\end{equation}

\noindent and 
\begin{equation}\label{2}
\h \text{ is stationary harmonic}.
\end{equation}

Note that if $\h$ is a solution of \eqref{3} then $\h \in H^1(\Omega)$ and then condition \eqref{2} is well-defined. Indeed we can see that there exists a solution of \eqref{3} in $H^1_0(\Omega)$ using the Lax-Milgram theorem. Then all the solutions are in $H^1(\Omega)$ since the difference between two solutions is harmonic in $\Omega$ end hence belongs to $H^1(\Omega)$. \\
% This equation arises in the study of limiting vorticities for the 
%Ginzburg-Landau equations without magnetic field in superconductivity and in the study of limiting vorticities 
%for system of point vortices. It is also related to the vorticity of stationary solution of Euler system in fluids 
%mechanics. 

We will discuss the physical motivations of this problem in the next section. Now we wish to examine in slightly more details the condition \eqref{2} and some of its direct consequences.
% Denoting by $\T$ the stress-energy tensor of $\h$, and expanding the relation \eqref{distributionsense} with the Leibniz rule, we can formally write that 
%$$\Delta \h \nabla \h =0 \ \ \text{or } \mu \nabla \h=0.$$
%
%However the Leibniz rule does not necessarily apply because for example 
%$|\partial_x \h|^2$ is in $L^1$ but it does not make sense to say that $\partial_x |\partial_x \h|^2=2\partial_x 
%\h \partial^2_{xx}\h$ since $\partial^2_{xx}\h$ is only a distribution and we can not multiply a distribution 
%by a $L^1$ function. \\
%Note that since $\mu$ belongs to $H^{-1}(\Omega)$ the elliptic regularity theory implies that $\h$ is in $H^1(\Omega)$. Then the Dirichlet energy of $\h$ given by $E(\h)= \frac{1}{2}\int_\Omega |\nabla \h |²dx$ is well defined and we can see that equation \eqref{2} means that $h_\mu$ is critical with respect to inner variations of the domains. It can be shown that the condition \eqref{2} is equivalent to 
%\begin{definition}
%A function $h$ is \textbf{stationary harmonic} if $\dive T_h=0$ in the sense of distributions, where 
%$$T_h=\begin{pmatrix}
%\frac{1}{2}(\partial_y h ^2-\partial_x h^2) & -\partial_x h \partial_y h \\
%-\partial_x h \partial_y h & \frac{1}{2}(\partial_x h ^2-\partial_y h^2)
%\end{pmatrix} $$
%Equivalently $h$ is stationary harmonic in $\Omega$ if $\omega_{h}:= \partial_x h ^2 -\partial_y h^2 -2i\partial_x h \partial_y h $ is holomorphic in $\Omega$.
%\end{definition}
One can show that if $h$ is harmonic ($\Delta h =0$) then $h$ is stationary harmonic but the converse is not true in general. It 
is true if $h$ is regular. Indeed using the same techniques as in \cite{SandierSerfaty} chapter 13 we can prove 
that if $\mu$ is in $L^p$ for some $p >1$ then a solution of \eqref{1}, \eqref{3},\eqref{2} is harmonic, \textit{i.e.}, 
$\mu=0$. For the proof of these facts and other properties of stationary harmonic functions we refer to the Appendix.\\

\noindent Another direct consequence of condition \eqref{2} is that $\nabla \h \in L^\infty_\text{loc}$ and then $\h$ is 
locally lipschitz continuous. This is due to the fact that $|\nabla \h|^2 =|\omega_{\h}|^2$ and $\omega_{\h}$ is 
holomorphic in $\Omega$. In particular $\h$ and $|\nabla \h |$ are continuous. The fact that $\omega_{\h}$ is holomorphic also gives us the following:

\begin{proposition}
Let $\h$ which satisfies that $\omega_{\h}=(\partial_x \h) ^2 -(\partial_y \h)^2 -2i\partial_x \h \partial_y \h$ is holomorphic. Then the zeros of $\omega_{\h}$  are isolated in $\Omega$. If $\Omega$ is compact there is a finite number of such critical points. 
\end{proposition}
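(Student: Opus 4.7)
The plan is to invoke the identity theorem for holomorphic functions. Since $\omega_{h_\mu}$ is holomorphic on $\Omega$, on any connected component one gets the classical dichotomy: either $\omega_{h_\mu}\equiv 0$ or its zero set is a discrete subset of that component. The proposition is essentially this dichotomy together with an exhaustion of the degenerate case.

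First I would dispose of the possibility $\omega_{h_\mu}\equiv 0$. Writing
\[
\omega_{h_\mu}= (\partial_x h_\mu)^2-(\partial_y h_\mu)^2 - 2i\,\partial_x h_\mu\,\partial_y h_\mu
\]
and separating real and imaginary parts yields $\partial_x h_\mu\,\partial_y h_\mu = 0$ together with $(\partial_x h_\mu)^2 = (\partial_y h_\mu)^2$ everywhere in $\Omega$. At any point, at least one of the partials vanishes by the second relation, and then the first identity forces the other partial to vanish as well. Hence $\nabla h_\mu \equiv 0$ and $h_\mu$ is locally constant. This trivial situation is implicitly excluded (otherwise the statement on the critical set is vacuous), so in the nontrivial case the zeros of $\omega_{h_\mu}$ are automatically isolated.

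For the second assertion I would argue by Bolzano--Weierstrass: any infinite subset of a compact set has an accumulation point, and an accumulation point of zeros of a non-identically-vanishing holomorphic function cannot lie in the interior of its domain, again by the identity theorem. Thus an infinite critical set would force a contradiction. The main subtlety I anticipate is the wording ``$\Omega$ compact'': strictly, a nonempty open subset of $\R^2$ cannot itself be compact. I would read the hypothesis as $\overline{\Omega}$ being compact and $\omega_{h_\mu}$ being holomorphic on a neighborhood of $\overline{\Omega}$, which also precludes accumulation of zeros on $\partial\Omega$; finiteness then follows immediately. Resolving this interpretive point is really the only obstacle, since the purely complex-analytic input is standard.
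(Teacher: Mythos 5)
Your proof is correct and follows exactly the standard argument the paper relies on (the paper states this proposition without writing out a proof, deferring to the holomorphy of $\omega_{\h}$): the identity theorem gives isolation of zeros on each component where $\omega_{\h}\not\equiv 0$, the degenerate case $\omega_{\h}\equiv 0$ forces $\nabla \h\equiv 0$ (indeed $|\omega_{\h}|=|\nabla \h|^2$, which gives your real/imaginary-part computation in one line), and compactness plus Bolzano--Weierstrass yields finiteness. Your reading of the loosely worded compactness hypothesis as $\overline{\Omega}$ compact with holomorphy on a neighborhood is the intended one.
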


 In the present paper we are interested in describing the properties of Radon measures $\mu$ which satisfy hypothesis \eqref{1}, \eqref{3}, \eqref{2}. Let us recall that the support of a measure $\mu$ is the complement of the largest open set $A$ such that $\mu(A)=0$.  Our first result describes the local regularity of the measure $\mu$ in the neighborhood of point $z_0$ which belongs to the support of $\mu$ and such that $\omega_{\h}(z_0) \neq 0$. Note that we can always assume that $\h(z_0)=0$ because adding a constant to $h$ does not change the hypothesis \eqref{1}, \eqref{3}, \eqref{2}. Note also that near a point $z_0$ which does not belong to the support of $\mu$ the function $\h$ is a harmonic function.

\begin{thm}\label{theorem1}
Let $z_0 \in \supp \mu $, with $(\h,\mu)$ which satisfy assumptions \eqref{1}, \eqref{3}, \eqref{2} and such that $\omega_{\h}(z_0)\neq 0$. We assume that $\h(z_0)=0$. Then there exist a neighborhood $V$ of $z_0$ and a harmonic function $H$ in $V$ such that 
\begin{equation}
\h =|H|, \ \text{in} \ V \ \text{or}  \  \h =-|H|, \ \text{in} \ V 
\end{equation}
\begin{equation}
\supp \mu_{\lfloor V} =\{z \in V ; H(z)=0 \}.
\end{equation}
Furthermore we have that $\nabla H(z_0) \neq 0$ and the set $\{z \in V ;H(z)=0\}$ is a smooth simple curve diffeomorphic to a straight line.
\end{thm}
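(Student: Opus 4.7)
The plan is to construct a harmonic comparison function $H$ locally from the holomorphic data $\omega_\h$ and then to show that $\h$ must factor as $\psi\circ H$ for a one-variable function $\psi$ with $\psi'\in\{-1,+1\}$; the measure hypothesis will then force $\psi(t)=\pm|t|$. Since $\omega_\h$ is holomorphic with $\omega_\h(z_0)\neq 0$, I choose a simply connected neighborhood $U$ of $z_0$, a holomorphic square root $f$ of $\omega_\h$ on $U$, and a holomorphic primitive $G$ of $f$ with $G(z_0)=0$. Setting $H:=\operatorname{Re}(G)$ produces a harmonic function on $U$ with $H(z_0)=0$, $\nabla H(z_0)\neq 0$, and $\omega_H=(G')^2=f^2=\omega_\h$. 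In particular $|\nabla\h|=|\nabla H|$ pointwise, and the identity $(\partial_x\h-i\partial_y\h)^2=(\partial_x H-i\partial_y H)^2$ yields $\nabla\h=\pm\nabla H$ almost everywhere in $U$, where the sign may a priori depend on the point.

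After shrinking $U$ so that $\nabla H$ is nowhere zero, the level sets $\{H=t\}$ foliate $U$ by smooth curves, and the above identity forces the tangential derivative of $\h$ along each level curve to vanish almost everywhere. A slicing/Fubini argument performed in the conformal coordinates provided by $G$, combined with the continuity of $\h$, then shows that $\h$ is constant on every level set of $H$. Therefore $\h=\psi\circ H$ on $U$ for some continuous $\psi\colon H(U)\to\mathbb R$ with $\psi(0)=0$, and the relation $|\nabla\h|=|\nabla H|$ gives $\psi'\in\{-1,+1\}$ almost everywhere, so $\psi$ is $1$-Lipschitz.

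The assumption that $\mu$ is a Radon measure now enters in an essential way. Using $\Delta H=0$ and computing in distributions (or changing variables conformally through $G$), one obtains $\mu=\Delta\h=\psi''(H)\,|\nabla H|^2$. Since $|\nabla H|^2$ is smooth and strictly positive on $U$, this forces $\psi''$ to be a Radon measure on $H(U)$; as $\psi'\in\{-1,+1\}$ already lies in $L^\infty$, this is equivalent to $\psi'\in BV_{\mathrm{loc}}(H(U))$, and a $\{-1,+1\}$-valued $BV_{\mathrm{loc}}$ function can have only finitely many jumps on any compact subinterval. I can therefore pick a neighborhood $V\subset U$ of $z_0$ small enough that $H(V)$ is contained in some interval $(-\eta,\eta)$ on which $\psi'$ has at most one jump.

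Finally, if $\psi'$ has no jump on $(-\eta,\eta)$, then $\psi$ is linear there and $\h=\pm H$ is harmonic on $V$, contradicting $z_0\in\supp\mu$; hence $\psi'$ must jump exactly once on $(-\eta,\eta)$, and the relation $\supp\mu=\{z:H(z)\in\supp\psi''\}$ together with $H(z_0)=0$ forces this jump to be at $0$. Combined with $\psi(0)=0$ and $\psi'\in\{\pm 1\}$, this leaves only $\psi(t)=|t|$ or $\psi(t)=-|t|$, and so $\h=|H|$ or $\h=-|H|$ on $V$. The implicit function theorem applied to $H$ with non-vanishing gradient then shows that $\{H=0\}\cap V$ is a smooth simple curve diffeomorphic to an interval, and the standard computation of $\Delta|H|$ in a neighborhood of its smooth zero set gives $\supp\mu\cap V=\{H=0\}\cap V$. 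The main obstacle is the factorization step $\h=\psi\circ H$: the sign in $\nabla\h=\pm\nabla H$ is only measurable and could in principle switch on a very irregular set, so extracting a global dependence of $\h$ on $H$ alone relies on the smooth foliation by the level sets of $H$ together with the continuity of $\h$.
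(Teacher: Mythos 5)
Your proof is correct, but it takes a genuinely different route from the paper's. Both arguments begin identically: a holomorphic square root of $\omega_{\h}$, its primitive $G$, and $H=\text{Re}\, G$, giving $\nabla \h=\theta\,\nabla H$ for a measurable sign $\theta:U\to\{\pm1\}$. The paper then studies $\theta$ as a two-variable object: it proves $\theta\in BV(B_R)$ (from $\partial_x\theta+i\partial_y\theta=\mu/g$ and the Radon-measure hypothesis), decomposes the essential boundary of $\{\theta=1\}$ into rectifiable Jordan curves via the Ambrosio--Caselles--Morel--Masnou structure theorem, shows each such curve is a level curve of $H$ using the generalized Gauss--Green formula, and invokes the coarea formula together with the maximum principle to conclude that only finitely many of these curves survive near $z_0$, so that after shrinking only $\{H=0\}$ remains. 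You instead pass to the conformal coordinates $w=G(z)$, in which $\partial_v(\h\circ G^{-1})=0$ a.e.; the ACL characterization of $H^1$ together with the continuity of $\h$ then yields the factorization $\h=\psi\circ H$ with $\psi$ $1$-Lipschitz, and the Radon-measure hypothesis collapses to the one-dimensional statement that $\psi'$ is a $\{\pm1\}$-valued $BV_{\mathrm{loc}}$ function, hence has finitely many jumps. The step you flag as the main obstacle — the sign possibly switching on an irregular set — is indeed disposed of by this Fubini/slicing argument, so there is no gap. Your route is shorter and avoids the fine structure theory of sets of finite perimeter entirely; the trade-off is that it is specific to the non-degenerate case, since the factorization $\h=\psi\circ H$ breaks down when $\nabla H(z_0)=0$ and the level sets of $H$ no longer foliate a neighborhood of $z_0$ (the paper's example $h=\theta(\varphi)\,r^2\cos(2\varphi)$ shows the sign can depend on the angular sector and not only on the value of $H$), which is precisely why the paper sets up the $BV$/finite-perimeter machinery that carries over to Theorems \ref{theorem2} and \ref{theorem3}.
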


\begin{figure}[ht!]
\begin{center}
\scalebox{0.5}{\input{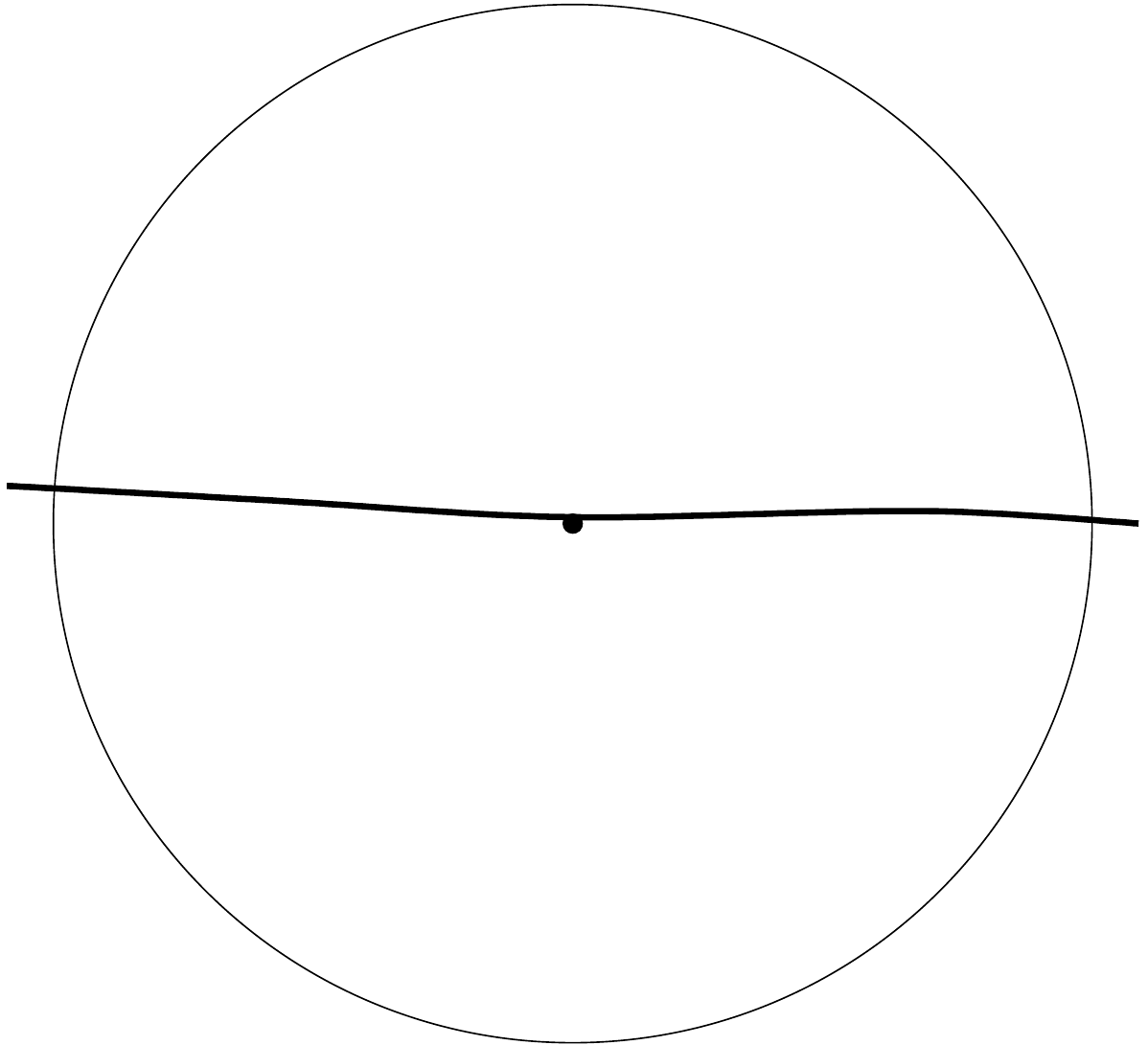_t}}
\end{center}
\caption{Near a regular point $\supp \mu$ is a smooth curve.}
\end{figure}

Near a point $z_0$ such that $\omega_{\h}(z_0)=0$ the behavior of $\h$ and the geometry of the support of $\mu$ is a little bit more complicated. Nevertheless if $z_0$ is a zero of even order of $\omega_{\h}$ the situation is similar.

\begin{thm}\label{theorem2}
Let $ z_0 \in \supp\mu$, with $(\h,\mu)$ which satisfy assumptions \eqref{1}, \eqref{3}, \eqref{2}, and such that $z_0$ is a zero of even order of $\omega_{\h}$. We assume that $\h(z_0)=0$.  Then there exist a neighborhood $V$ of $z_0$, a harmonic function $H$ in $V$ and a function $\theta: V \rightarrow \{ \pm 1 \}$ such that 
\begin{equation}
\h(z) =\theta (z) H(z) \ \ \text{in} \ V.
\end{equation}
The function $\theta H$ is continuous and $\nabla H(z_0) =0$. Besides the support of $\mu_{\lfloor V}$ is a union of smooth curves included in $ \{z \in V ; H(z)=0 \}$ which end at $z_0$.
%\begin{equation}
%\supp( \mu_{\lfloor V}) \subseteq \{z \in V ; H(z)=0 \},
%\end{equation} 
\end{thm}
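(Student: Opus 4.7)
The plan is to construct a candidate harmonic function $H$ as a holomorphic square root of $\omega_{\h}$, to show that $\h^2 = H^2$ on a sufficiently small disk $V$ around $z_0$, and then to read off the structure of $\supp \mu$ from the zero set of $H$. Concretely, since $\omega_{\h}$ is holomorphic with a zero of even order $2k \geq 2$ at $z_0$, on a small disk $V$ one factors $\omega_{\h} = F^2$ with $F$ holomorphic vanishing to order $k$ at $z_0$. Let $G$ be the primitive of $F$ with $G(z_0) = 0$; then $G$ has a zero of order $m := k + 1 \geq 2$ at $z_0$, and $H := \mathrm{Re}\,G$ is harmonic on $V$ with $H(z_0) = 0$, $\nabla H(z_0) = 0$, and $(\partial_x H - i\partial_y H)^2 = F^2 = \omega_{\h}$; in particular $|\nabla H|^2 = |\nabla \h|^2$ pointwise on $V$.

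The heart of the proof is the identity $\Phi := \h^2 - H^2 \equiv 0$ on $V$. Distributionally, $\Delta \Phi = 2(|\nabla \h|^2 - |\nabla H|^2) + 2\h\mu = 2\h\mu$. Applying Theorem \ref{theorem1} at each $z_1 \in \supp \mu \cap (V \setminus \{z_0\})$ after the harmless shift $\h \mapsto \h - \h(z_1)$ (which leaves $\omega_{\h}$ unchanged) shows that $\h$ is locally constant on $\supp \mu \cap V$, hence constant on each connected component. Taking $V$ small enough that every such component has $z_0$ in its closure, continuity of $\h$ and $\h(z_0) = 0$ force $\h \equiv 0$ on $\supp \mu \cap V$, so $\h\mu = 0$ and $\Phi$ is harmonic on $V$. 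To identify $\Phi$ as zero, pick $z_1$ in the open dense set $V \setminus \supp \mu$ (nonempty, else $V \subset \supp \mu$ and $\h \equiv 0$ would force $\mu = 0$, contradicting $z_0 \in \supp \mu$); on a small ball $W_1 \ni z_1$, $\h$ is classically harmonic, so $\partial_x \h - i\partial_y \h$ is a holomorphic square root of $F^2$ and equals $\epsilon F$ for a constant $\epsilon \in \{\pm 1\}$, giving $\h = \epsilon H + d$ on $W_1$ for some $d \in \R$. Then $\Phi = 2\epsilon d H + d^2$ on $W_1$; since both sides are harmonic on the connected set $V$, the identity extends to all of $V$, and evaluation at $z_0$ yields $d^2 = 0$, so $\Phi \equiv 0$.

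With $\h^2 = H^2$ in hand, set $\theta(z) := \h(z)/H(z)$ where $H(z) \neq 0$ and $\theta(z) := 1$ elsewhere; then $\theta \in \{\pm 1\}$, $\h = \theta H$ on $V$, and $\theta H = \h$ is continuous. For the local geometry of $\{H = 0\}$, $G$ has a zero of order $m$ at $z_0$, so a holomorphic change of coordinate $\zeta$ yields $G = \zeta^m$ and $H = \mathrm{Re}(\zeta^m) = \rho^m \cos(m\alpha)$ in polar coordinates $\zeta = \rho e^{i\alpha}$, whose zero set is the union of $2m$ smooth arcs meeting transversally at $z_0$. Finally, $\supp \mu \cap V \subset \{H = 0\}$, and Theorem \ref{theorem1} guarantees that $\supp \mu$ is locally a smooth curve at every point of $V \setminus \{z_0\}$, so each of the $2m$ punctured arcs is either entirely contained in $\supp \mu$ or entirely disjoint from it; hence $\supp \mu \cap V$ is a union of some of these $2m$ smooth arcs, each ending at $z_0$. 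The main obstacle is the identification $\h \equiv 0$ on $\supp \mu \cap V$ — the identity $\h^2 = H^2$ cannot hold without it — which requires shrinking $V$ enough to exclude any component of $\supp \mu$ that intersects $V$ without reaching $z_0$.
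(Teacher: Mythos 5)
Your construction of $H$ and your endgame (harmonicity of $\Phi=\h^2-H^2$, unique continuation from a ball off $\supp\mu$, and reading $\supp\mu$ off the $2m$ arcs of $\{H=0\}$) are fine, and this route is genuinely different from the paper's. The computation $\Delta\Phi=2\h\mu$ is also legitimate, since $\h$ is locally Lipschitz and $\mu\in H^{-1}$, so $\h\varphi$ is an admissible test function. But there is a real gap exactly at the point you yourself flag as ``the main obstacle'': you assert that $V$ can be shrunk so that every connected component of $\supp\mu\cap(V\setminus\{z_0\})$ has $z_0$ in its closure, and you give no argument for this. Theorem \ref{theorem1} only yields that $\h$ is \emph{locally} constant on $\supp\mu$ away from $z_0$ and that, locally, $\supp\mu$ sits inside level sets of $H$; a priori there could be infinitely many components --- arcs or loops at levels $c_j\neq 0$ with $c_j\to 0$ --- accumulating at $z_0$ without reaching it, on which $\h$ takes nonzero constant values. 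In that scenario $\h\mu\neq 0$, $\Phi$ is not harmonic, and the identity $\h^2=H^2$ fails, so the whole argument collapses. No choice of $V$ is ``small enough'' unless one first proves this accumulation cannot happen.

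Ruling out that scenario is the actual content of the paper's proof, and it is not cheap: the paper shows the sign function $\theta$ with $\nabla\h=\theta\nabla H$ is in $BV$ near $z_0$ (delicate precisely because $\nabla H(z_0)=0$, so one first gets $BV_{\mathrm{loc}}$ on $B_R\setminus\{z_0\}$ and works in annuli), decomposes the essential boundary of $\{\theta=1\}$ into rectifiable Jordan curves via the Ambrosio--Caselles--Masnou--Morel structure theorem, identifies these with connected components of level curves of $H$, uses the maximum principle to force each such curve to exit every small ball, and invokes the coarea formula to bound their number. Only then does one know that $\supp\mu$ near $z_0$ is a \emph{finite} union of level curves of $H$, so that shrinking $V$ leaves only those through $z_0$. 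With that finiteness in hand, your $\Phi\equiv 0$ device is an attractive alternative to the paper's direct integration of $\nabla(\h-\theta H)=0$; without it, or some substitute for it, the proof does not close.
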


A key ingredient in the proof of the previous theorem is the local structure of the set of zeros of harmonic functions (see \textit{e.g.} \cite{HartmanWintner} or \cite{fonctionharmonique}).
 
\begin{thm}[\cite{fonctionharmonique}]\label{courbes harmoniques}
Let $H$ be a harmonic function defined on an open set $D \subset \R²$. We let $Z_0(H):=\{z\in D ; H(z)=0\}$. Suppose $z_0\in D$, $H(z_0)=0$ and $H$ is not identically zero. Then there exist a unique integer $n=n(H,z_0)\geq 1$, a neighborhood $U(z_0)$ of $z_0$ in $D$ and $n$ analytic curves 
$$\gamma_k:]-1,1[ \rightarrow U(z_0), \ \ \ \ \ (k=1,2,...,n) $$
such that $\gamma_k(0)=z_0$ and: 
\begin{itemize}
\item[1)] $Z_0(H) \cap U(z_0)= \displaystyle{\cup_{k=1}^n \gamma_k}$ (where $\gamma_k$ denotes the set $\{\gamma_k(t) ; t \in]-1,1[\})$
\item[2)] $\ang(\gamma_k, \gamma_{k+1})=\frac{\pi}{n}$, $k=1,..n$, where $\gamma_{n+1}$ denotes $\gamma_1$ and $\ang(\gamma_k, \gamma_{k+1})$ is the angle between $\gamma_k$ and $\gamma_{k+1}$ at $z_0$.
\item[3)] There exists an analytic diffeomorphism $\phi: U(z_0) \rightarrow B(0,1)$ such that 
$$\phi \circ \gamma_k(t)=t \exp(i\theta_k)$$
where $t\in ]-1,1[$, $k=1,...,n, \theta_k=\frac{\pi}{2n}+\frac{(k-1)\pi}{n}$.
\end{itemize} 
This means that $\Gamma_k=\phi(\gamma_k)$ are $n$ symmetrically placed diameters of $B(0,1)$.
\end{thm}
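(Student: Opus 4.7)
The plan is to reduce the statement to a fact about holomorphic functions via a harmonic conjugate. First I would restrict to a small open disk $U \subset D$ centered at $z_0$. Since $U$ is simply connected, $H$ admits a harmonic conjugate $K$ on $U$, and $f := H + iK$ is holomorphic on $U$. Subtracting the imaginary constant $iK(z_0)$ produces a holomorphic function still with real part $H$ but vanishing at $z_0$; so I may assume $f(z_0)=0$. Because $H \not\equiv 0$, also $f \not\equiv 0$, hence $f$ vanishes to some finite order $n \geq 1$ at $z_0$. This $n$ is the integer announced in the theorem, and its uniqueness is immediate from its characterization as the order of vanishing of $f$ at $z_0$ (the harmonic conjugate is unique up to a real constant, so $f$ is then uniquely pinned down by $f(z_0)=0$).

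Second I would put $f$ into normal form. Writing $f(z) = (z-z_0)^n g(z)$ with $g$ holomorphic near $z_0$ and $g(z_0) \neq 0$, a holomorphic $n$-th root $h$ of $g$ is well defined in a small neighborhood of $z_0$ (set $h := \exp((\log g)/n)$ for some branch of $\log$), with $h(z_0) \neq 0$. Setting $\phi(z) := (z-z_0) h(z)$ then yields $\phi(z_0)=0$, $\phi'(z_0) = h(z_0) \neq 0$ and $f = \phi^n$. The holomorphic inverse function theorem now produces a neighborhood $U(z_0)$ of $z_0$ on which $\phi$ is a biholomorphism onto $B(0,1)$, after shrinking $U(z_0)$ and rescaling $\phi$ if necessary.

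Finally I would read off the zero set of $H$ in the $w$-plane. Since $H \circ \phi^{-1}(w) = \mathrm{Re}(w^n) = \rho^n \cos(n\psi)$ for $w = \rho e^{i\psi}$, the zero set of $H \circ \phi^{-1}$ inside $B(0,1)$ coincides with the union of the $n$ diameters $\Gamma_k$ determined by $\psi \equiv \theta_k \pmod{\pi}$ with $\theta_k = \pi/(2n) + (k-1)\pi/n$. Setting $\gamma_k := \phi^{-1}(\Gamma_k)$ produces the $n$ analytic curves required by items 1) and 3). The angle condition 2) follows because $\phi'(z_0) \neq 0$ makes $\phi$ conformal at $z_0$, so the angles between the $\gamma_k$ at $z_0$ equal the angles between the $\Gamma_k$ at the origin, which are $\pi/n$ by construction. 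The only step that deserves real care is the holomorphic $n$-th root construction together with the verification that $\phi$ is a genuine biholomorphism onto $B(0,1)$, but this is a routine consequence of $g(z_0) \neq 0$ and of the holomorphic inverse function theorem; everything else is a transparent translation of the polar-coordinate computation for $\mathrm{Re}(w^n)$ through the conformal change of variables.
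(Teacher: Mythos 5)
Your argument is correct, and it is the standard proof of this result. Note, however, that the paper does not prove Theorem~\ref{courbes harmoniques} at all: it is quoted from the reference \cite{fonctionharmonique} (see also \cite{HartmanWintner}), so there is no in-paper proof to compare against; your harmonic-conjugate/normal-form argument ($f=H+iK$, $f=\phi^n$ with $\phi$ a local biholomorphism, then reading off the zero set of $\mathrm{Re}(w^n)$ as $n$ symmetrically placed diameters) is essentially the classical one. The only cosmetic point is that after rescaling $\phi$ so that its image is exactly $B(0,1)$ one gets $H\circ\phi^{-1}(w)=c\,\mathrm{Re}(w^n)$ for a positive constant $c$ rather than $\mathrm{Re}(w^n)$ itself, which of course does not affect the zero set or the angles.
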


%
%From Theorems \ref{theorem2} and \ref{courbes harmoniques} we obtain
%
%\begin{corollary}
%With $\mu$ as in Theorem \ref{theorem2} the support of $\mu$ is a reunion of smooth curves which intersect at $z_0$.
%\end{corollary}

\textbf{Remark:} Note that in Theorem \ref{theorem2} it can happen that the support of $\mu$ is strictly contained in the set $\{z \in v ;H(z)=0\}$. In this case we can not have $\h=|H|$. This is illustrated by the following example: we set $h(re^{i\varphi})= \theta (\varphi) r^2\cos(2\varphi)$, for $r\in [0,1]$, $\varphi \in [0,2\pi[$ and

$$\theta(\varphi)= \begin{cases} -1, \  \text{if}  \ \frac{\pi}{4}\leq \varphi \leq \frac{3\pi}{4} \\
+1, \ \text{otherwise}.
\end{cases}$$
This function $h$ satisfies \eqref{1}, \eqref{3}, \eqref{2}. In particular one can check that  $\Delta h=\mu$ with $\supp(\mu)=D_1 \cup D_2$ where $D_1=\{ z=re^{i\varphi}, 0\leq r \leq 1 \ \text{and} \ \varphi=\frac{\pi}{4}\}$, $D_2=\{ z=re^{i\varphi}, 0\leq r \leq 1 \ \text{and} \ \varphi=\frac{3\pi}{4}\}$\\

\begin{figure}[ht!]
\begin{center}
\scalebox{0.5}{\input{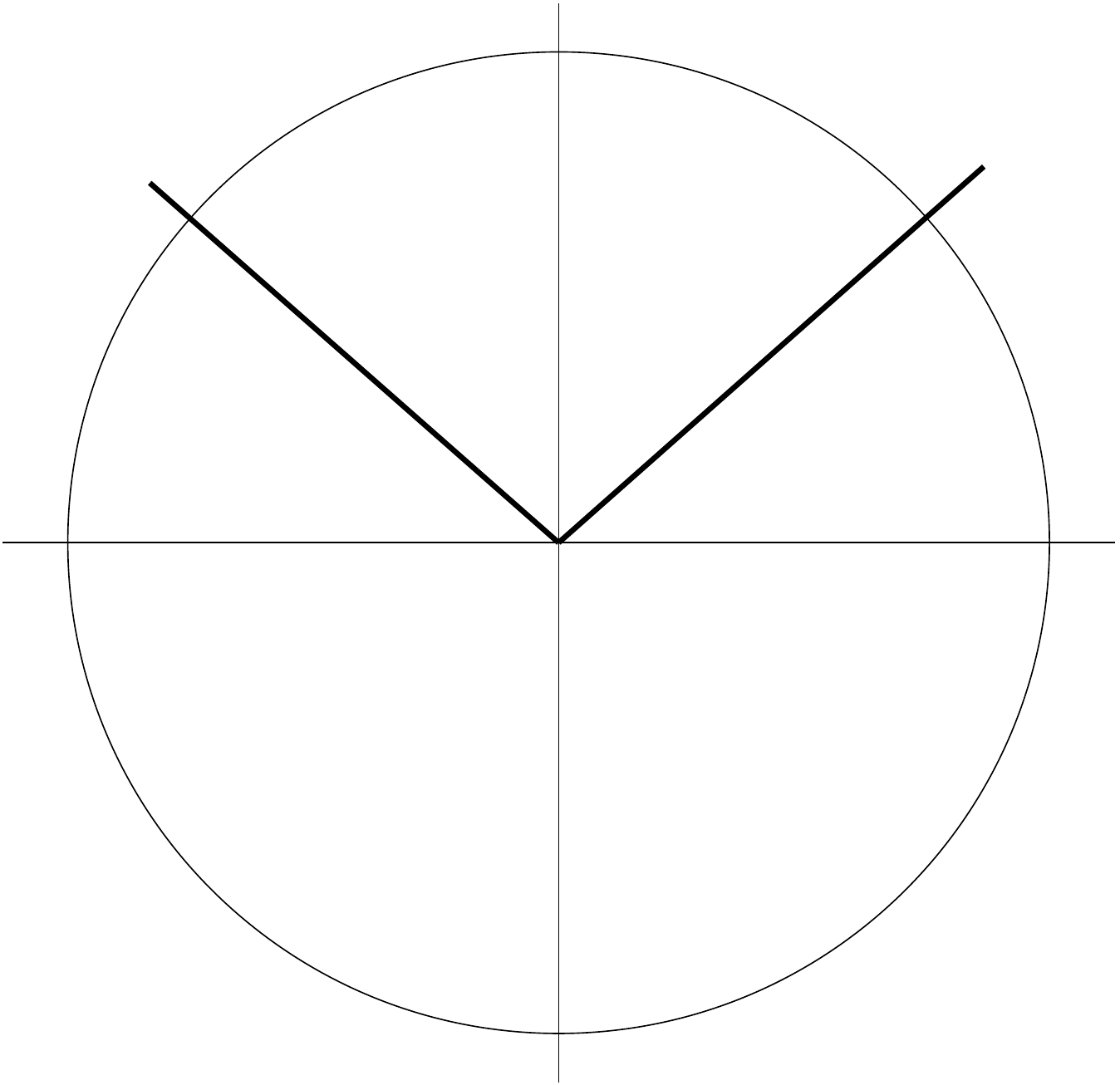_t}}
\end{center}
\caption{An example of the geometry of $\supp \mu$ near a critical point of $\h$.}
\end{figure}

When $z_0 \in \supp(\mu)$  is a zero of odd order of $\omega_{\h}$ we must use multivalued harmonic function. 

\begin{thm}\label{theorem3}
Let $ z_0 \in \supp\mu$ with $(\h,\mu)$ which satisfy assumptions \eqref{1}, \eqref{3}, \eqref{2}, and such that $z_0$ is a zero of odd order of $\omega_{\h}$. We assume that $\h(z_0)=0$. Then there exist a neighborhood $V$ of $z_0$, a multivalued harmonic function $H_1$ in $V$ such that $H:=|H_1|$ is a single-valued function and a function $\theta: V \rightarrow \{ \pm 1 \}$ such that 

\begin{equation}
\h(x) =\theta(x) H(x) \ \ \text{in} \ V,
\end{equation}
the function $\theta H$ being continuous and $\nabla H (z_0)=0$. Besides the support of $\mu_{\lfloor V}$ is a union of smooth curves included in $ \{z \in V ; H(z)=0 \}$ which end at $z_0$.

Furthermore the function $H_1$ is such that: there exist an unique integer $n \geq 1$, a small number $r >0$ and a biholomorphism $\Phi :B(0,r) \rightarrow V$ such that $\Phi(0)=z_0$ and 
\begin{equation}
H_1 \circ \Phi (z)= \text{Re} (z^{n+\frac{1}{2}}), \ \ \text{for} \ z \in B(0,r)
\end{equation}
\end{thm}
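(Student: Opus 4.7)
The plan is to adapt the strategy of Theorem~\ref{theorem2} to the multivalued setting: since $\omega_{\h}$ has a zero of odd order at $z_0$, a holomorphic square root of $\omega_{\h}$ is only defined on a double cover of a punctured neighbourhood of $z_0$, and this forces the harmonic primitive $H_1$ to be multivalued.

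First I would construct $H_1$ and the biholomorphism $\Phi$. Writing $\omega_{\h}(z)=(z-z_0)^{2n-1}g(z)$ with $g$ holomorphic and $g(z_0)\neq 0$ (the exponent $2n-1$ being the odd order of the zero, which defines the unique integer $n\geq 1$), I integrate the two-valued holomorphic function $\tfrac{1}{2}\sqrt{\omega_{\h}}$ on $V\setminus\{z_0\}$. This produces a multivalued holomorphic primitive $F_1$ whose two branches differ by a sign, since the monodromy of $z^{n-1/2}$ around $0$ equals $-1$. The square $F_1^{\,2}$ is then single-valued and extends holomorphically across $z_0$ with a zero of order $2n+1$. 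Applying the standard holomorphic normal form to $F_1^{\,2}$ (any holomorphic function with a zero of order $k$ is a $k$-th power in suitable coordinates) yields a biholomorphism $\Phi:B(0,r)\to V$ with $\Phi(0)=z_0$ and, after fixing a branch, $F_1\circ\Phi(w)=w^{n+1/2}$. Setting $H_1:=\text{Re}\,F_1$ and $H:=|H_1|$ immediately gives the formula of the theorem, the single-valuedness and continuity of $H$, and $\nabla H(z_0)=0$.

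Next I would prove $\h=\theta H$ on $V$. Fix a simply connected open $U\subset V\setminus\{z_0\}$ on which a branch of $F_1$ has been chosen, so that $H_1$ is single-valued and harmonic on $U$. From $(2\partial_z\h)^2=\omega_{\h}=(F_1')^2$, together with the continuity and non-vanishing of $\omega_{\h}$ on $U$ and $\nabla\h\in L^\infty_{\text{loc}}$, the distribution $2\partial_z\h$ admits a continuous representative on $U$ and must satisfy $2\partial_z\h=\varepsilon F_1'$ for a locally constant $\varepsilon\in\{\pm 1\}$. Hence $\h-\varepsilon H_1$ is antiholomorphic and real on each connected component of $U\setminus\supp\mu$, so it is constant; continuity at $z_0$ combined with $\h(z_0)=0$ and the vanishing of both branches of $H_1$ at $z_0$ forces this constant to equal $0$ on components whose closure contains $z_0$. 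Piecing these local identities together produces a function $\theta:V\to\{\pm 1\}$ such that $\h=\theta H$ on $V$, with $\theta H$ continuous because it coincides with $\h$.

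Finally, on every open set where $\theta$ is constant, $\theta H$ agrees with a branch of $\pm H_1$ and is therefore harmonic, so $\mu=0$ there. Consequently $\supp\mu\subset\{H=0\}=\{H_1=0\}$, which in the coordinates given by $\Phi$ is $\{\text{Re}(w^{n+1/2})=0\}$: the union of the $2n+1$ radial segments with $\arg w\in\{\pi(2k+1)/(2n+1):k=0,\ldots,2n\}$. This gives the smooth curves ending at $z_0$, and Theorem~\ref{courbes harmoniques} can be invoked on any simply connected subset of $V\setminus\{z_0\}$ after choosing a single-valued branch of $H_1$. The main obstacle I anticipate is the bookkeeping around branches: ensuring that the local signs $\varepsilon$ obtained on each simply connected piece fit together into a globally well-defined $\theta$, and that the integration constants in $\h=\pm H_1+\mathrm{const}$ all vanish by continuity at $z_0$, where the two branches of $H_1$ get interchanged.
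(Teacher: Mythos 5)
Your construction of $H_1$ and of the biholomorphism $\Phi$ is essentially the paper's: factor the odd-order zero, take a square root of the unit factor, integrate the resulting multivalued $g(z)=(z-z_0)^{n+\frac12}\varphi_1(z)$ term by term to get $G(z)=(z-z_0)^{n+\frac32}\varphi_2(z)$ with $\varphi_2(z_0)\neq 0$, and extract an $(n+\frac32)$-th root of $\varphi_2$ to normalize $G$ to a pure power (modulo a harmless factor of $\tfrac12$ in your normalization of $\sqrt{\omega_{\h}}$, and an index shift). That part is fine.

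The gap is in the step where you claim that, on a simply connected $U\subset V\setminus\{z_0\}$ where $\omega_{\h}$ is continuous and nonvanishing, the distribution $2\partial_z\h$ ``admits a continuous representative'' and hence equals $\varepsilon F_1'$ for a \emph{locally constant} $\varepsilon\in\{\pm1\}$. Knowing that $(2\partial_z\h)^2$ is continuous and nonzero only controls the modulus $|\nabla\h|$; the sign can jump on an arbitrary measurable set, and it does jump exactly on $\supp\mu$. Indeed, if $2\partial_z\h$ were continuous wherever $\omega_{\h}\neq0$, then $\theta$ would be locally constant there, $\h$ would be harmonic off the discrete zero set of $\omega_{\h}$, and $\mu\in H^{-1}$ would have to vanish, contradicting $z_0\in\supp\mu$. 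Your subsequent argument is also circular: you work ``on each connected component of $U\setminus\supp\mu$'' in order to deduce $\supp\mu\subset\{H=0\}$, but the location of $\supp\mu$ is precisely what is unknown; a priori $\theta$ could jump across level curves $\{H=c\}$ with $c\neq0$ that do not pass through $z_0$ and accumulate near it. Identifying and then eliminating those curves is the technical heart of the paper's proof: one shows $\theta\in BV$ on each of the sectors $W_k$ into which $\{H=0\}$ cuts $W$ (this sector-by-sector treatment is forced because $H=|H_1|$ is only Lipschitz, so the Leibniz-rule computation for $\nabla\theta$ cannot be done globally), applies the Ambrosio--Caselles--Masnou--Morel structure theorem to decompose $\partial_\star W_k^+$ into rectifiable Jordan curves, uses the collinearity of the measure-theoretic normal with $\nabla H$ to identify these curves as connected components of level sets of $H$, invokes the coarea formula together with the finiteness of the Radon measure $\mu$ to conclude there are only finitely many, and only then shrinks the neighborhood so that the surviving curves lie in $\{H=0\}$ and $\nabla(\h-\theta H)=0$ yields $\h=\theta H$. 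Without some substitute for this chain of arguments, your ``locally constant $\varepsilon$'' step does not stand.
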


Thanks to the property satisfied by the function $H_1$ in the previous theorem we can obtain a description of the set of zeros of $H_1$ similar to Theorem \ref{courbes harmoniques}.                                                                                                                                                                                                                 

\begin{thm}\label{courbesharmoniquesmulti}
let $H_1$ be as in the previous Theorem \ref{theorem3}. Then there exist $2n+1$ analytic curves 
$$\gamma_k:]-1,1[ \rightarrow V, \ \ (k=1,2,...,2n+1)$$
such that $\gamma_k(0)=z_0$ and
\begin{itemize}
\item[1)] $\{z \in \R^2 ; H_1(z)=0\} \cap V= \displaystyle{\cup_{k=1}^{2n+1} \gamma_k}$
\item[2)] $\ang(\gamma_k,\gamma_{k+1})=\frac{2\pi}{n+1}$, $k=1,...,2n+1,$ where $\gamma_{2n+n}$ denotes $\gamma_1$ and $\ang(\gamma_k,\gamma_{k+1})$ is the angle between $\gamma_k$ and $\gamma_{k+1}$ at $z_0$.
\item[3)] There exists an analytic diffeomorphism $\phi: V \rightarrow B(0,1)$ such that 
$$\phi \circ \gamma_k(t)=t \exp(i\theta_k)$$
where $t\in ]-1,1[$, $k=1,...,2n+1,$ and  $\theta_k=\frac{\pi}{2n+1}+\frac{2(k-1)\pi}{2n}$.
\end{itemize}
\end{thm}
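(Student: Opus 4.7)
The plan is to pull everything back to $B(0,r)$ via the biholomorphism $\Phi$ of Theorem \ref{theorem3} and read off the zero set of $H_1$ from the explicit model $\mathrm{Re}(z^{n+1/2})$, whose zeros in a disk are elementary to describe. Since $\Phi$ is biholomorphic, hence a conformal analytic diffeomorphism that preserves angles at $0\mapsto z_0$, the three assertions of the theorem will follow at once from the corresponding computation for the model.

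In polar coordinates $z=\rho e^{i\varphi}$ one has $\mathrm{Re}(z^{n+1/2})=\rho^{n+1/2}\cos((n+\tfrac12)\varphi)$, so the zeros in $B(0,r)$ are the origin together with those points where $(n+\tfrac12)\varphi\equiv \pi/2\pmod{\pi}$, i.e.\ $\varphi=(2\ell+1)\pi/(2n+1)$. Since $\gcd(2,2n+1)=1$, this yields exactly $2n+1$ distinct directions modulo $2\pi$; setting
\[
\theta_k:=\frac{(2k-1)\pi}{2n+1},\qquad k=1,\dots,2n+1,
\]
we obtain $2n+1$ open rays $\Gamma_k:=\{\rho e^{i\theta_k}:0<\rho<r\}$ whose consecutive angular gap is $\theta_{k+1}-\theta_k=\frac{2\pi}{2n+1}$. (I am reading the formulas $\frac{2\pi}{n+1}$ and $\theta_k=\frac{\pi}{2n+1}+\frac{2(k-1)\pi}{2n}$ in the statement as typos for $\frac{2\pi}{2n+1}$ and $\frac{(2k-1)\pi}{2n+1}$; nothing in the argument changes.)

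To conclude, set $\phi(w):=\Phi^{-1}(w)/r$, an analytic diffeomorphism $V\to B(0,1)$, and define $\gamma_k(t):=\Phi(rt\,e^{i\theta_k})$ so that $\phi\circ\gamma_k(t)=t\,e^{i\theta_k}$; this is assertion (3), and each $\gamma_k$ is analytic because $\Phi$ is. For (1), the identity $H_1\circ\Phi=\mathrm{Re}(z^{n+1/2})$ (well-defined as a zero-set statement even though $H_1$ is multivalued, since the two branches differ only by sign) implies that the zero locus of $H_1$ in $V$ is the $\Phi$-image of the zero locus of the model, hence $\{z_0\}\cup\bigcup_k\Phi(\Gamma_k)=\bigcup_{k=1}^{2n+1}\gamma_k$. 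For (2), conformality of $\Phi$ at $0$ preserves the angle between $\Gamma_k$ and $\Gamma_{k+1}$ at $0$, which is $\frac{2\pi}{2n+1}$, so the same holds for $\gamma_k,\gamma_{k+1}$ at $z_0$.

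The only real subtlety, which I would flag as the main obstacle, is the parametrization interval: because $2n+1$ is odd, antipodal rotation $\theta_k\mapsto\theta_k+\pi$ does \emph{not} permute the set $\{\theta_k\}$, so the $2n+1$ rays do \emph{not} combine into diameters as they do in Theorem \ref{courbes harmoniques}. The natural domain of each $\gamma_k$ is therefore $[0,1)$ rather than $]-1,1[$; the statement of the theorem must be read with this in mind (or one can formally extend by stipulating $\gamma_k(t)=z_0$ for $t\le 0$, at the price of losing analyticity there).
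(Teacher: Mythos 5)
Your proof is correct and follows exactly the route the paper intends: the paper in fact states this theorem without writing out a separate proof, relying on the normal form $H_1\circ\Phi=\mathrm{Re}(z^{n+1/2})$ from Theorem \ref{theorem3} and the conformality of $\Phi$ in precisely the way you do. You are also right to flag the constants in the statement as typos ($\frac{2\pi}{n+1}$ and $\theta_k=\frac{\pi}{2n+1}+\frac{2(k-1)\pi}{2n}$ should be $\frac{2\pi}{2n+1}$ and $\frac{(2k-1)\pi}{2n+1}$), and your observation that the $2n+1$ rays cannot pair into diameters because $2n+1$ is odd, so that each $\gamma_k$ is naturally parametrized on $[0,1[$ rather than $]-1,1[$, identifies a genuine defect of the statement rather than of your argument.
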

%We then have obtained:
%\begin{corollary}
%With $\mu$ as in Theorem \ref{theorem3} the support of $\mu$ is a reunion of smooth curves which intersect at $z_0$.
%\end{corollary}

\begin{figure}[ht!]
\begin{center}
\scalebox{0.5}{\input{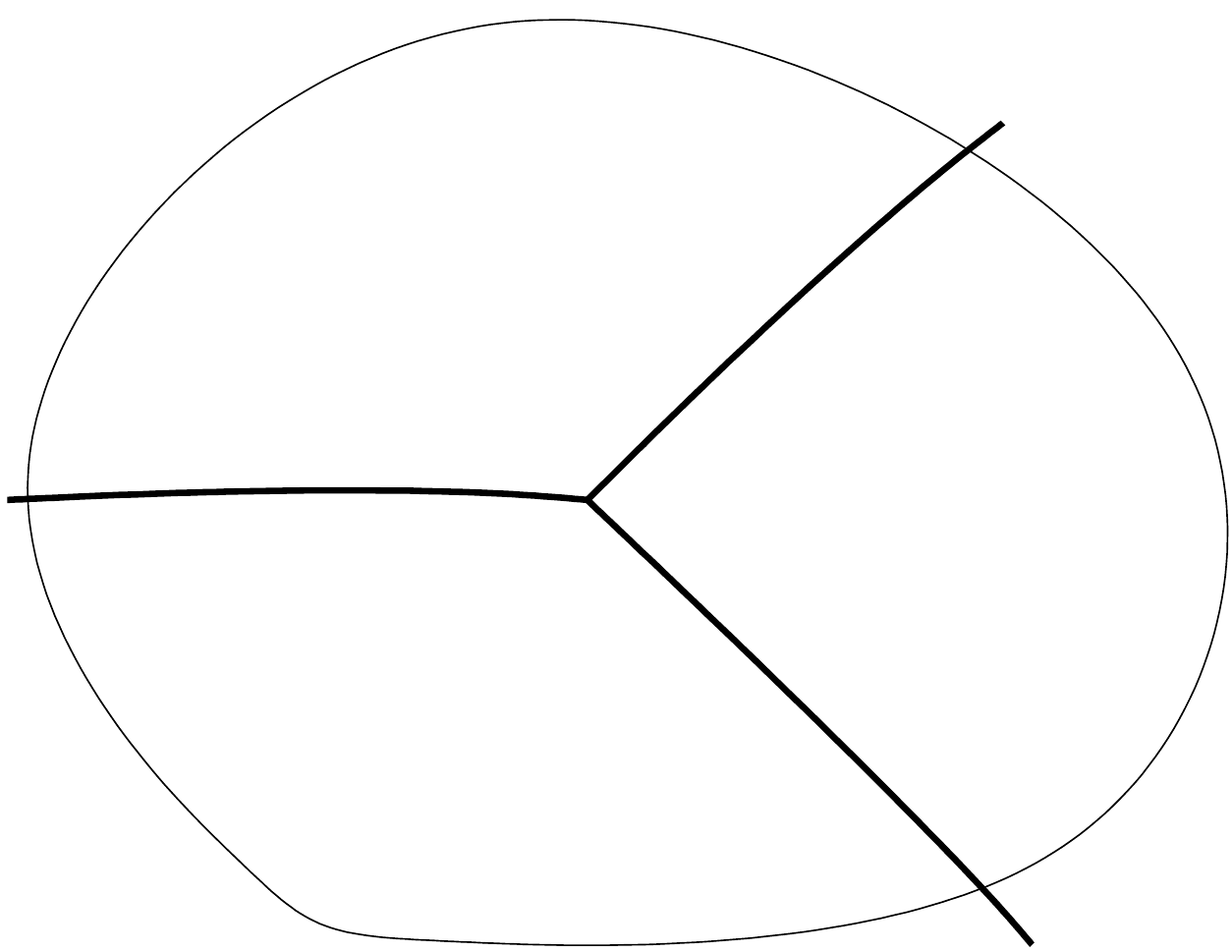_t}}
\end{center}
\caption{Illustration of Theorem \ref{theorem3}.}
\end{figure}

In order to conclude this introduction we would like to comment on the hypothesis \eqref{1}, \eqref{3}, \eqref{2}.
First note that the fact that $\h$ is in $H^1$ (or equivalently that $\mu \in H^{-1}$) is essential to assume \eqref{2} since we take the divergence of the tensor $T_\mu$ in the sense of distributions we must have that its coefficients are in $L^1_{loc}$. Then we want to give an example which shows that \eqref{2} does not necessarily imply that $\mu$ is a Radon measure. The example is the following: one can take $h$ defined on $[0,1]$ such that $h(0)=0$ and 
$$h'(x)= \begin{cases} +1, \ \text{if} \ x \in ]\frac{1}{n+1},\frac{1}{n}[ \ \text{with} \ n \ \text{even} \\
-1, \  \text{if} \ x \in ]\frac{1}{n+1},\frac{1}{n}[ \ \text{with} \ n \ \text{odd}.
\end{cases} $$
We then have that $h \in H^1([0,1])$, and $h$ satisfies $\omega_h=|h'(x)|^2=1$ is holomorphic. But $\Delta h= \sum_{n=2}^{+\infty} \delta_{\frac{1}{n}}$ is not a Radon measure.\\

The paper is organized as follows: In Section 2 we explain the physical motivations for studying this problem.  Section 3 is devoted to the description of the measure $\mu$ near a point $z_0$ such that $\omega_{\h}(z_0) \neq 0$. In Section 4 we discuss the case of a zero of even order of $\omega_{\h}$ and in Section 5 the case of a zero of odd order of $\omega_{\h}$.

\section{Physical motivations of the problem}
\subsection{Connections to Ginzburg-Landau vortices without magnetic field.}
The conditions \eqref{1}, \eqref{3}, \eqref{2} are motivated by the problem of describing limiting vorticities for the critical points $(u_\e)_\e $ of the Ginzburg-Landau energy without magnetic field

\begin{equation}
E_\e(u)=\frac{1}{2}\int_\Omega |\nabla u|^2dx +\frac{1}{4\e^2} \int_\Omega (1-|u|^2)^2dx.
\end{equation}
\noindent Here $u$ is a complex-valued function called the \textit{order parameter} and its isolated zeros are called \textit{vortices}. The Ginzburg-Landau theory is a model for describing the superconductivity. The Ginzburg-Landau system without magnetic field was studied by Béthuel-Brézis-Hélein in \cite{BBH}. Later on Sandier-Serfaty in \cite{SandierSerfaty} studied the Ginzburg-Landau system with magnetic field which is a more physically relevant model. The vortices are important features of the model. They correspond to small regions in the superconducting sample where the superconductivity is destroyed. Let $\Omega$ be a bounded domain in $\R^2$. We consider a family $(u_\e)_{\e>0}$ of solutions of 
\begin{equation}\label{G.L equation}
-\Delta u_\e =\frac{u_\e}{\e^2}(1-|u_\e|^2) \ \ \text{in} \ \Omega.
\end{equation}
We assume that $|u_\e| \leq 1$ in $\Omega$ and 

\begin{equation}
E_\e(u_\e) < C_0 \e^{\alpha -1}, \ \alpha > \frac{2}{3}
\end{equation}
for every $\e>0$. We let $j_\e=\langle iu_\e, \nabla u_\e \rangle $ where $\langle .,.\rangle$ denotes the inner product in $\C$ identified with $\R^2$.  We also let $\mu_\e= \curl j_\e$. Here $j_\e$ describes superconducting currents and $\mu_\e$ is the vorticity of these currents. A direct calculation shows that $\dive j_\e=0$ hence we can write $j_\e= \nabla^\perp h_\e$ for some function $h_\e$.  Furthermore this function satisfies the following equation 

\begin{equation}\label{G.L measure}
\left\{
\begin{array}{rcll}
\Delta h_\e & = &\mu_\e \ \ \text{in} \ \Omega \\
\partial_\nu h_\e & =& \langle j_\e , \tau \rangle \ \ \text{on}  \ \partial \Omega.
\end{array}
\right.
\end{equation}

\noindent Here $\nu$ is the outward pointing normal to $\partial \Omega$ and $\tau= \nu^\perp$. By the solution to \eqref{G.L measure} we mean the solution with zero average in $\Omega$. We split $h_\e$ into two pieces: let us define $h_\e^0$ and $h_\e^1$ by
\begin{equation}
\left\{
\begin{array}{rcll}
-\Delta h_\e^1 &=&\mu_\e \ \ \text{in} \ \Omega, \\
h_\e^1 & =& 0 \ \ \text{on} \ \partial \Omega.
\end{array},
\ \ h_\e^0=h_\e-h_\e^1. \nonumber
\right.
\end{equation}

We recall the following result which describes the behavior of the vorticity measure as $\e$ goes to $0$ (see \cite{SanSer1} and \cite{SandierSerfaty}). 

\begin{thm}[Theorem 13.2 in \cite{SandierSerfaty}]\label{SanSer}
\begin{itemize}
\item[A)] Let $\{u_\e\}_{\e>0}$ be solutions of \eqref{G.L equation}. Then for any $\e>0$, there exists a measure $\nu_\e$ of the form $2\pi\sum_id_i^\e \delta_{a_i^\e}$ where the sum is finite, $a_i^\e \in \Omega$ and $d_i^\e \in \mathbb{Z}$ for every $i$, such that, letting $n_\e=\sum_i|d_i^\e|$,
\begin{equation}
n_\e \leq C \frac{E_\e(u_\e, \mathcal{B_\e})}{|\log\e|},
\end{equation}
where $\mathcal{B_\e}$ is a union of balls of total radius less than $C\e^{2/3}$, and such that 
\begin{equation}\label{13-17}
\|\mu_\e -\nu_\e\|_{W^{-1,p}(\Omega)}\|\mu_\e-\nu_\e\|_{(C^0(\Omega))^*} \rightarrow 0,
\end{equation}
for some $p\in (1,2)$.
\item[B)] Let $\{\nu_\e\}_\e$ be any measures of the form $2\pi\sum_id_i^\e \delta_{a_i^\e}$ satisfying \eqref{13-17}, let $n_\e=\sum_i |d_i^\e|$, and let $\{M_\e\}_\e$ be positive real numbers such that $\{h_\e^0/M_\e\}_\e$ converges in $L^1_{loc}(\Omega)$ to a function $H_0$. Then $H_0$ is harmonic and, possibly after extraction, one of the following holds.
\begin{itemize}
\item[0)] $n_\e=0$ for every $\e$ small enough and then $\mu_\e$ tends to $0$ in $W^{-1,p}(\Omega)$.
\item[1)] $n_\e=o(M_\e)$ is nonzero for $\e$ small enough, and then $\mu_\e/n_\e$ converges in $W^{-1,p}(\Omega)$ to a measure $\mu$ such that 
$$\mu \nabla H_0=0,$$
hence the support of $\mu$ is contained in the set of critical points of $H_0$.
\item[2)]$M_\e \sim \lambda n_\e$, with $\lambda>0$, and then $\mu_\e/M_\e$ converges in $W^{-1,p}(\Omega)$ to a measure $\mu$, and $h_\e/M_\e$ converges in $W^{1,p}_{loc}(\Omega)$ to a solution of $\Delta \h=\mu$ in $\Omega$. Moreover the symmetric $2$-tensor $T_\mu$ with coefficients $T_{ij}$ given by 
\begin{equation}\label{13-18}
T_{ij}=-\partial_i\h\partial_j\h+\frac{1}{2}|\nabla \h|^2\delta_{ij}
\end{equation}
is divergence-free in finite part (see Definition \ref{divergencefree} below).
\item[3)] $M_\e=0(n_\e)$, and then $\mu_\e/n_\e$ converges in $W^{-1,p}(\Omega)$ to a measure $\mu$, and $h_\e/n_\e$ converges in $W^{1,p}_{loc}(\Omega)$ to the solution of 
\begin{equation}
\left \{
\begin{array}{rcll}
\Delta \h &=&\mu \ \ \text{in} \ \Omega \\
\h&=&0 \ \ \text{on} \ \partial \Omega.

\end{array}
\right.
\end{equation}
Moreover the symmetric $2$-tensor $T_\mu$ with coefficients $T_{ij}$ given by \eqref{13-18} is divergence-free in finite part.
\end{itemize}
\end{itemize}
In cases 2) and 3), if $\mu \in H^{-1}(\Omega)$ then solutions of $\Delta \h= \mu$ are in $H^{1}_{loc}(\Omega)$. Thus $T_\mu$ is in $L^1_{loc}(\Omega)$ and we have that $\dive (T_\mu)=0$ in the sense of distributions. In other words $\h$ is stationary harmonic.
\end{thm}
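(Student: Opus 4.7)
The bulk of the statement is quoted directly from Theorem~13.2 of \cite{SandierSerfaty}, so the only thing to establish here is the trailing addendum: assuming $\mu \in H^{-1}(\Omega)$ in cases 2) or 3), every solution $\h$ of $\Delta \h = \mu$ lies in $H^1_{\mathrm{loc}}(\Omega)$, the tensor $T_\mu$ defined by \eqref{13-18} is in $L^1_{\mathrm{loc}}(\Omega)$, and $\dive T_\mu = 0$ in $\mathcal{D}'(\Omega)$. My plan has three ingredients, one per claim.

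First, I would upgrade the regularity of $\h$ from $W^{1,p}_{\mathrm{loc}}$ (which is all that cases 2)--3) guarantee a priori) to $H^1_{\mathrm{loc}}$. Since $\mu \in H^{-1}(\Omega)$ is a continuous linear functional on $H^1_0(\Omega)$, the Lax--Milgram theorem, already invoked in the introduction, produces $u \in H^1_0(\Omega)$ with $\Delta u = \mu$ in $\Omega$. Setting $v := \h - u$, one has $\Delta v = 0$ in $\mathcal{D}'(\Omega)$ (note $v \in L^1_{\mathrm{loc}}$ since both $\h$ and $u$ are); by Weyl's lemma $v$ is smooth, and in particular in $H^1_{\mathrm{loc}}(\Omega)$. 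Hence $\h = u + v \in H^1_{\mathrm{loc}}(\Omega)$. In case 3) the conclusion is anyway automatic, because the solution selected by the boundary condition $\h=0$ on $\partial\Omega$ is precisely the Lax--Milgram solution in $H^1_0(\Omega)$.

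Next, since the entries of $T_\mu$ in \eqref{13-18} are homogeneous polynomials of degree $2$ in $\partial_x \h$ and $\partial_y \h$, and $\nabla \h \in L^2_{\mathrm{loc}}(\Omega)$ by the previous step, each coefficient of $T_\mu$ belongs to $L^1_{\mathrm{loc}}(\Omega)$. In particular the pairing $\int_\Omega \sum_{i,j} T_{ij}\,\partial_j X_i \,dx$ is unambiguously defined for every $X \in C_c^\infty(\Omega;\R^2)$ and agrees with the action of $-\dive T_\mu$ on $X$ in the distributional sense. It then remains to reconcile this distributional pairing with the notion of divergence-free in finite part (Definition \ref{divergencefree}), which is the weaker conclusion supplied by \cite{SandierSerfaty} in cases 2)--3). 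The finite-part definition is engineered to absorb possible singular concentrations of $|\nabla \h|^2$ at the locally finite set of points where $\h$ might fail to be $H^1$: it pairs $T_\mu$ with test fields after excising small balls of radius $\rho$ around those concentrations and passing to $\rho \to 0$. Once $T_\mu$ is genuinely in $L^1_{\mathrm{loc}}$, as established above, the excised contributions tend to zero by dominated convergence, and the finite-part pairing coincides in the limit with the ordinary one. Therefore the identity ``$\dive T_\mu = 0$ in finite part'' upgrades to $\dive T_\mu = 0$ in $\mathcal{D}'(\Omega)$, which is exactly condition \eqref{2}.

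The only genuinely non-trivial step is the last reconciliation: one has to unfold Definition \ref{divergencefree}, identify the excision procedure, and verify that the boundary and volume contributions produced by the cut-off vanish against the $L^1_{\mathrm{loc}}$ bound on $T_\mu$ as the radius tends to zero. The first two steps are soft consequences of $\mu \in H^{-1}$ and standard elliptic regularity; it is worth noting that the assumption $\mu \in H^{-1}(\Omega)$ is perfectly consistent with the absence of concentrations, since in dimension two $H^1$ does not embed in $C^0$ and hence no Dirac mass lies in $H^{-1}$.
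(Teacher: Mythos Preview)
Your proposal is essentially correct and mirrors what the paper does. The paper itself offers no proof of this theorem: the body is quoted verbatim from \cite{SandierSerfaty}, and the addendum is justified exactly along your lines --- Lax--Milgram plus the observation that the difference of two solutions is harmonic (stated in the introduction), followed by the upgrade ``divergence-free in finite part $+$ $L^1_{\mathrm{loc}}$ $\Rightarrow$ divergence-free in $\mathcal{D}'$\,'', which the paper records as Proposition~13.1 of \cite{SandierSerfaty} rather than proving.

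One small correction to your description of Definition~\ref{divergencefree}: the excised sets $E_\delta$ are general sets of vanishing $1$-capacity, not necessarily balls around a locally finite set of points, and the integration in condition~3 is over $\Omega\setminus F_\delta$ with $F_\delta=\zeta^{-1}(\zeta(E_\delta))$, which is more delicate than simply shrinking a radius. Your dominated-convergence sketch is the right intuition, but passing from small capacity of $E_\delta$ to smallness of the contribution over $F_\delta$ is exactly the content of Proposition~13.1 in \cite{SandierSerfaty}; the paper cites it rather than reproving it, and you would be well advised to do the same.
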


Hence we can see that the limiting vorticity in cases 2), 3), with the additional hypothesis that $\mu \in H^{-1}(\Omega)$ satisfies condition \eqref{3}, \eqref{2}. Understanding the limiting measure $\mu$ will in turn give qualitative information on the behavior of vortices. 

\noindent We now recall the definition of the notion of divergence-free in finite part taken from \cite{SandierSerfaty}.

\begin{definition}\label{divergencefree} 
Assume $X$ is a vector field in $\Omega$. We say that $X$ is \textit{divergence-free in finite part} if there exists a family of sets $\{E_\delta\}_{\delta>0}$ such that
\begin{itemize}
\item[1.]For any compact $K \subset \Omega$, we have $\lim_{\delta \rightarrow 0} \capa_1(K\cap E_\delta)=0.$
\item[2.] For every $\delta >0$, $X \in L^1(\Omega \setminus E_\delta).$
\item[3.] For every $\zeta \in C^\infty_c(\Omega)$,
$$\int_{\Omega \setminus F_\delta} X\cdot \nabla \zeta =0$$
where $F_\delta=\zeta^{-1}(\zeta(E_\delta)).$
If $T$ is a 2-tensor with coefficients $\{ T_{ij} \}_{1\leq i,j\leq 2}$, we say that $T$ is divergence free in finite part if the vectors $T_i=(T_{i1},T_{i2})$ are, for $i=1,2$.
\end{itemize}
\end{definition}
\noindent In this definition we denoted by $\capa_1$ the $1$-capacity of a set $E \subset \R^2$ and we recall from Evans-Gariepy \cite{EvansGariepy} that the $p$-capacity  ($1\leq p <2$) of a set $E$ is defined as 

$$\capa_p(E)= \inf\{\int_{\R^2} |\nabla \varphi |^p; \varphi \in L^{p^*}(\R^2), \nabla \varphi \in L^p(\R^2), A \subset int(\varphi \geq 1) \},$$
where $int(A)$ denotes the interior of $A$ and $p^*=\frac{2p}{2-p}$.
We would like mention that in \cite{NamLe}, the author studied limiting vorticity measures associated to the Ginzburg-Landau system with magnetic field. This leads to conditions analog to \eqref{1}, \eqref{3}, \eqref{2}. He investigated these conditions under the additional assumption that the measure $\mu$ is supported by a simple smooth curve. He then proved, among other things, that in that in this case $\mu$ has a fixed sign.
\subsection{Connections to the Euler System}
It turns out that conditions \eqref{3}, \eqref{2} are also related to the Euler equations for incompressible flow in fluid mechanics. They can be written as follows:

\begin{equation}\label{Euler}
\left \{
\begin{array}{rclll}
\partial_t v+ (v\cdot \nabla) v +\nabla p &=&0 \ \text{in} \ \Omega  \\
\dive(v)&=&0 \ \text{in} \ \Omega
\end{array}
\right.
\end{equation}
where $\Omega$ is an open set of $\R^2$. In this system $p$ is called the pressure and it is an unknown of the system. Here $v\cdot \nabla v :=v_1\partial_x v + v_2\partial_y v$, and $v$ is the velocity of the fluid. The system is stationary if it does not involve in time, \textit{i.e.}, if  $\partial_tv =0$ in $\Omega$. A quantity of particular interest in fluid mechanics is the vorticity of the fluid defined by 
\begin{equation}
\mu=\curl v. 
 \end{equation}
 We must be more specific to define the notion of solutions of the Euler system. Indeed we want to give a meaning to \eqref{Euler} for vector-fields which are only in $L^2(\Omega)$. First note that thanks to the condition $\dive(v)=0$ we can rewrite the stationary Euler system in the following form:

\begin{equation}\label{Euler2}
\left\{
\begin{array}{rcll}
\dive(v\otimes v) +\nabla p &=& 0 \\
\dive(v) &=& 0.
\end{array}
\right.
\end{equation}
where $(v\otimes v)$ is a $2\times 2$ matrix given by $(v\otimes v)_{i,j}= v_iv_j$, for $1 \geq i,j \geq 2$. The divergence of a matrix is the sum of the divergence of the row. Let us denote by $\langle A,  B \rangle := \tra (A^t B)$ the inner product between two matrices.

\begin{definition}
Let $\Omega$ be an open set in $\R²$. We say that  $v \in L^2(\Omega,\R^2)$ is a \textit{weak solution} of \eqref{Euler2} if there exists $p \in L^1(\Omega)$ such that
\begin{equation}
\int_\Omega \langle v \otimes v, D\varphi \rangle +\int_\Omega p\dive(\varphi) =0 ,\ \ \forall \ \varphi \in\mathcal{C}_c^\infty(\Omega,\R^2). 
\end{equation}
\end{definition}

\begin{proposition}\label{linkwithEuler}
Let $\h$ satisfy \eqref{1}, \eqref{3}, \eqref{2}. We set $v= \nabla^\perp \h$. Then $v$ is a weak solution of the stationary Euler system with vorticity equal to $\mu$.
\end{proposition}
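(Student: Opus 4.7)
The plan is to exhibit the pressure $p$ explicitly by comparing $v\otimes v$ with the stress-energy tensor $T_{\h}$ from \eqref{stress-energy}. First observe that since $\h \in H^1(\Omega)$, the field $v := \nabla^\perp \h = (-\partial_y \h, \partial_x \h)$ automatically belongs to $L^2(\Omega,\R^2)$, satisfies $\dive(v)=0$ distributionally, and has vorticity $\curl(v) = \partial_x^2 \h + \partial_y^2 \h = \Delta \h = \mu$. The incompressibility condition and the vorticity identity are thus immediate; what remains is to produce a pressure $p \in L^1(\Omega)$ realizing the weak momentum equation.

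A direct entry-by-entry computation yields the matrix identity
\begin{equation*}
v\otimes v = T_{\h} + \frac{1}{2}|\nabla \h|^2 \, I_2,
\end{equation*}
where $I_2$ denotes the $2\times 2$ identity. Indeed, the off-diagonal coefficients on both sides equal $-\partial_x \h\,\partial_y \h$, while each diagonal entry of $v\otimes v$ exceeds the corresponding entry of $T_{\h}$ by exactly $\frac{1}{2}|\nabla \h|^2$. This algebraic identity dictates the choice $p := -\frac{1}{2}|\nabla \h|^2$, which lies in $L^1(\Omega)$ because $\nabla \h \in L^2(\Omega)$.

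For every test field $\varphi \in \mathcal{C}^\infty_c(\Omega,\R^2)$, one has $\bigl\langle \frac{1}{2}|\nabla \h|^2 I_2, D\varphi\bigr\rangle = \frac{1}{2}|\nabla \h|^2\, \dive(\varphi)$, so the trace contributions cancel and
\begin{equation*}
\int_\Omega \langle v\otimes v, D\varphi\rangle + \int_\Omega p\, \dive(\varphi) = \int_\Omega \langle T_{\h}, D\varphi\rangle.
\end{equation*}
The right-hand side is exactly $-\langle \dive(T_{\h}), \varphi\rangle$ in the distributional sense, which vanishes by the stationary harmonic assumption \eqref{2}. The pairing is entirely licit because the coefficients of $T_{\h}$ lie in $L^1_{\text{loc}}(\Omega)$ (in fact in $L^\infty_{\text{loc}}$, since $\nabla \h \in L^\infty_{\text{loc}}$ as noted in the paper) and $\varphi$ has compact support. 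No genuine obstacle arises: the proof reduces to the algebraic identity $v\otimes v - T_{\h} = \frac{1}{2}|\nabla \h|^2 I_2$ combined with the definition of stationary harmonicity.
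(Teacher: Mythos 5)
Your proof is correct and follows essentially the same route as the paper: both compare $v\otimes v$ with the stress-energy tensor $T_{\h}$, invoke $\dive T_{\h}=0$ tested against $D\varphi$, and absorb the remaining trace part $\frac{1}{2}|\nabla \h|^2 I_2$ into the pressure (the paper merely carries out the computation entry by entry rather than through the single matrix identity you state). The only discrepancy is the sign of the pressure: your $p=-\frac{1}{2}|\nabla \h|^2$ is the one consistent with the weak formulation $\int_\Omega \langle v\otimes v, D\varphi\rangle + \int_\Omega p\,\dive(\varphi)=0$ as defined, while the paper records $p=+\frac{1}{2}|\nabla \h|^2$ after a small sign slip in its last display; this does not affect the conclusion.
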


\begin{proof}
Let $\varphi=(\varphi_1,\varphi_2) \in \mathcal{C}_c^\infty (\Omega,\R^2)$, recall that $\nabla ^\perp h=(-\partial_yh,\partial_xh)$.
\begin{eqnarray}
\int_\Omega \langle \nabla^\perp h \otimes \nabla ^\perp h, D\varphi \rangle = \int_\Omega (\partial_yh)^2\partial_x\varphi_1-(\partial_yh\partial_xh)\left[\partial_y\varphi_1 +
\partial_x\varphi_2\right]+(\partial_xh)^2\partial_y\varphi_2. \nonumber
\end{eqnarray}
However because of the condition \eqref{2} we have 
\begin{eqnarray}
\int_\Omega \frac{1}{2}(\partial_yh^2-\partial_xh^2)\partial_x\varphi_1-(\partial_xh\partial_yh)\partial_y\varphi_1&=&0 \nonumber \\ 
\int_\Omega (-\partial_xh\partial_yh)\partial_x\varphi_2+\frac{1}{2}(\partial_xh^2-\partial_yh^2)\partial_y\varphi_2&=&0 .\nonumber 
\end{eqnarray}

\noindent Hence we can rewrite 

\begin{eqnarray}
\int_\Omega \langle \nabla^\perp h \otimes \nabla ^\perp h, D\varphi \rangle =\int_\Omega \frac{1}{2}(\partial_xh^2+\partial_yh^2)\partial_x\varphi_1 +\int_\Omega \frac{1}{2}(\partial_x h^2+\partial_yh^2)\partial_y\varphi_2. \nonumber
\end{eqnarray}
We then set $p=\frac{1}{2}|\nabla h|^2 \in L^1(\Omega)$ and we obtain that for all $\varphi \in \mathcal{C}_c^\infty(\Omega,\R^2)$ we have 

\begin{equation}
\int_\Omega \langle \nabla^\perp h \otimes \nabla ^\perp h, D\varphi \rangle=-\int_\Omega p\dive(\varphi). \nonumber
\end{equation}
Thus $v=\nabla^\perp h$ is a weak solution of stationary Euler system, with pressure $p=\frac{1}{2}|\nabla h|^2$ and with vorticity equal to $\curl \nabla ^\perp h= \Delta h =\mu$.
\end{proof}

%%\begin{proof}
%Let $v=\nabla^\perp h$ then a direct computation shows that $\dive(v)=0$, we must check that $v\cdot \nabla v=v_1\partial_x v +v_2 \partial_y v$ can be written as a gradient.
%We have 
%\begin{eqnarray}
%(\nabla^\perp h \cdot \nabla) \nabla ^\perp h &=& -\partial_yh\partial_x \nabla^\perp h +\partial_xh\partial_h\nabla ^\perp h \nonumber \\
%&=& -\partial_yh(-\partial^2_{xy}h,\partial^2_xh)+\partial_xh(-\partial_y^2h,\partial^'yx}h) \nonumber \\
%&=& (\partial_yh\partial_{xy}^2h-\partial_xh\partial_{y}^2h,-\partial_yh\partial_{x}^2 h+\partial_xh\partial_{yx}^2h). \nonumber
%\end{eqnarray}

%e then use the hypothesis that $u$ is stationary harmonic which means that 
%$(\triangle h)\partial_xh=(\triangle h)\partial_y h =0$ so we find that 

%\begin{eqnarray}
%(\nabla^\perp h \cdot \nabla) \nabla ^\perp h &=& (\partial_yh\partial_{xy}^2 h +\partial_x h \partial_{xx}²h,\partial_xh \partial_{yx}^2h+\partial_yh\partial_{yy}^2h) \nonumber \\
%&=& \frac{1}{2}(\partial_x[(\partial_x h)^2+(\partial_y h)^2], \partial_y[(\partial_x h)^2+(\partial_y h)^2]) \nonumber \\
%&=& \nabla \frac{1}{2} |\nabla h|^2 \nonumber
%\end{eqnarray}
%\end{proof}

The previous Proposition \ref{linkwithEuler} combined with Theorems \ref{theorem1}, \ref{theorem2}, 
\ref{theorem3}, implies that if $v$ is a weak solution of the Euler system \eqref{Euler2} such that 
$v=\nabla^\perp h$ with $h$ which satisfies hypothesis \eqref{1}, \eqref{3}, \eqref{2} then $v$ is a 
\textit{vortex sheet} solution of \eqref{Euler2}. The vortex sheet problem consists in finding a solution $(v,p)$ 
of \eqref{Euler} such that the initial data $v_{|t=0}=v_0$ satisfies that $\dive(v_0)=0$ and $\omega_0=\curl v_0= 
\delta_{\Sigma}$ with $\Sigma$ a compact smooth curve in $\R^2$. The existence of global solution of vortex sheet 
solutions of the Euler equation is due to J.M Delort in \cite{JMDelort}. Note that in his paper an important assumption for the proof of the existence of global solution of vortex sheet solutions is that the initial data $\omega_0=\curl v_0=\mu$ is a positive (or negative) measure. However in cases of theorems \ref{theorem2}, \ref{theorem3}, it can happen that $\mu$ has no sign. For example setting $h(r,\varphi)=\theta(\varphi)r^2\cos(2\varphi)$ with $\theta(\varphi)=+1$, if $-\frac{3\pi}{4}\leq \varphi\leq \frac{\pi}{4}$ and $\theta(\varphi)=-1$ if $\varphi \in [-\pi,\pi] \setminus [-\frac{3\pi}{4},\frac{\pi}{4}]$. Then one can check that $\Delta h$ is a measure with no fixed sign.\\

Let us mention that not all stationary solutions of the Euler system \eqref{Euler2} can be written as $v=\nabla^\perp h$ with $h$ which satisfies \eqref{1},\eqref{3},\eqref{2}. For example we take $v=(-y,x)$ for $x,y \in B(0,1)$. Then we can check that $v$ is a solution of \eqref{Euler2} with $p=\frac{1}{2}(x^2+y^2)$. We can write $v=\nabla^\perp h$ with $h=\frac{1}{2}(x^2+y^2)$. But $h$ satisfies that $\omega_h= (x-iy)^2$ and it is not holomorphic. Hence \eqref{2} is not satisfied. Note that in this case $\Delta h =1$ in $B(0,1)$. Such a solution is called a \textit{vortex patch}.

\subsection{Connections to system of point vortices}

A system of $N$-point vortices in evolution is described by the following system of ordinary differential equations 
\begin{equation}
\frac{dz_i}{dt}(t)=\nabla^ \perp \left[ \sum_{j=1,j\neq i}^N d_j \ln |z-z_j(t)| \right] (z_i(t)), \ \forall i=1,...,N.
\end{equation}
with $\nabla^\perp =(-\partial_y,\partial_x)$ and $d_j \in \mathbb{N}$. The points $z_i(t)$ are called vortices and $d_i$ are the \textit{degrees} of vortices. The system is \textit{stationary} if the vortices do not evolve in time, one then has
\begin{equation}\label{syspointvortices}
\sum_{j=1,j\neq i}^N d_j\frac{z_i-z_j}{|z_i-z_j|^2}=0, \ \ \ \forall i=1,...,N.
\end{equation}

A natural question is the following: \textit{What are the limiting vorticities of a stationary system of point vortices when the number of points tends to infinity?}

Let us reformulate precisely this question. Let $\Omega$ be a bounded domain, we are interested in Radon measure $\mu$ which satisfies the following conditions:

\begin{equation}\label{9}
\forall \e >0, \exists  N^\e \in \mathbb{N}, \ (z^\e_i)_{1\leq  i \leq N_\e} \in \Omega, \ d^\e_i \in \mathbb{Z} \ \text{ s.t.} \|\mu -2\pi  \sum_{i=1}^{N^{\e}} d^\e_i \delta _{z_i^\e} \|_{(\mathcal{C}^0(\Omega))^*} <\e 
\end{equation}
\begin{equation}\label{10}
\ \big((z^\e_i)_{1\leq  i \leq N_\e}, (d^\e_i)_{1\leq i \leq N^\e} \big) \text{ define a stationary system of point vortices.}
\end{equation}

The limiting vorticities of a stationary system of point vortices are described by a result analog to Theorem \ref{SanSer}:

\begin{thm}\label{pointvortex}
Let $\Omega$ be a bounded domain. Let $\mu$ be a Radon measure in $\Omega$ which satisfies \eqref{9} and \eqref{10}. There exists a function $u \in L^1_{loc}(\Omega)$ such that 
\begin{itemize}
\item[1)] $\Delta u =\mu$ \\
\item[2)] The tensor $$T_u=  \begin{pmatrix}
\frac{1}{2}\left[(\partial_y u) ^2-(\partial_x u) ^2\right] & -\partial_x u \partial_y u \\
-\partial_x u \partial_y u & \frac{1}{2}\left[(\partial_x u) ^2-(\partial_y u)^2\right]
\end{pmatrix} $$ is divergence-free in finite parts. 
Furthermore if $\mu$ is in $H^{-1}(\Omega)$ then $u$ is in $H^1(\Omega)$ and $\dive(T_u)=0$ in the sense of distributions. That is $u$ satisfies the conditions \eqref{1}, \eqref{3}, \eqref{2}.
\end{itemize}
\end{thm}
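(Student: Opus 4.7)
The strategy is to realize $u$ as the $L^1_{loc}$-limit of logarithmic potentials attached to the approximating point-vortex configurations, and to convert the stationarity condition \eqref{syspointvortices} into the divergence-free-in-finite-parts property of $T_u$ through a Pohozaev-type residue computation. For each $\e > 0$ I set
\[
u_\e(z) := \sum_{i=1}^{N^\e} d_i^\e \log|z-z_i^\e|,
\]
so that $\Delta u_\e = 2\pi \sum_i d_i^\e \delta_{z_i^\e}$, and by \eqref{9} this converges to $\mu$ in $(\mathcal{C}^0(\Omega))^*$. Writing $u_\e$ as the Newtonian potential of its Laplacian on a slightly larger domain (plus a harmonic correction) and exploiting that convolution with $\log|\cdot|$ turns weak-$*$ convergence of bounded measures into relative $L^1_{loc}$-compactness, I extract a subsequence with $u_\e \to u$ in $L^1_{loc}(\Omega)$, and $\Delta u = \mu$ passes to the limit distributionally.

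For the divergence-free-in-finite-parts property at fixed $\e$, I note that away from the vortices $u_\e$ is harmonic, hence $\dive T_{u_\e} = 0$ classically. For $\zeta \in \mathcal{C}_c^\infty(\Omega,\R^2)$ and small $r>0$, integration by parts on $\Omega \setminus \bigcup_i B(z_i^\e, r)$ gives
\[
\int_{\Omega \setminus \bigcup_i B(z_i^\e, r)} \langle T_{u_\e}, D\zeta\rangle \, dx = -\sum_i \int_{\partial B(z_i^\e, r)} (T_{u_\e}\,\nu) \cdot \zeta \, ds.
\]
Expanding $\nabla u_\e(z) = d_i^\e (z-z_i^\e)/|z-z_i^\e|^2 + V_i^\e + O(|z-z_i^\e|)$ near $z_i^\e$, with $V_i^\e := \sum_{j\neq i} d_j^\e (z_i^\e-z_j^\e)/|z_i^\e-z_j^\e|^2$, the purely singular contribution integrates to zero on each circle by angular symmetry, the smooth $O(r)$ remainder vanishes as $r\to 0$, and the cross term produces a residue proportional to $d_i^\e\,\zeta(z_i^\e)\cdot V_i^\e$; this vanishes by the stationarity condition \eqref{syspointvortices}. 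Hence $T_{u_\e}$ is divergence-free in finite parts with exceptional family $\{\bigcup_i B(z_i^\e, r)\}_{r>0}$.

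The main obstacle is the passage to the limit: I must produce a single family $\{E_\delta\}_{\delta>0}$, independent of $\e$ after diagonal extraction, witnessing Definition \ref{divergencefree} for $T_u$. Since $|\nabla u_\e|^2$ has non-integrable singularities at each vortex, $E_\delta$ must contain balls around all vortices whose radii decay carefully with $\delta$; the technical heart of the argument is to balance the capacity bound $\capa_1(K \cap E_\delta) \to 0$ against a uniform $L^1$-bound on $T_{u_\e}$ over the complement coming from energy estimates accompanying \eqref{9}. Once this is achieved, elliptic regularity for $u_\e$ (harmonic outside the vortices) upgrades the $L^1_{loc}$ convergence to $W^{1,p}_{loc}$ convergence on $\Omega \setminus E_\delta$, so $T_{u_\e} \to T_u$ there, and the identity of the previous paragraph passes to the limit. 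Finally, if $\mu \in H^{-1}(\Omega)$, Lax--Milgram provides a solution of $\Delta u = \mu$ in $H^1_0(\Omega)$; any two solutions differ by a harmonic function, so $u \in H^1_{loc}(\Omega)$ and $T_u \in L^1_{loc}(\Omega)$ without exception. The divergence-free-in-finite-parts property, combined with $\capa_1(K \cap E_\delta)\to 0$ and absolute continuity of the integral, then upgrades to $\dive T_u = 0$ in the sense of distributions, i.e.\ $u$ satisfies \eqref{1}, \eqref{3}, \eqref{2}.
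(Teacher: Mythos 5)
Your overall architecture (logarithmic potentials for the approximating configurations, a residue computation converting \eqref{syspointvortices} into weak stationarity, then a limit passage) is the same as the paper's, but two steps contain genuine problems.

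First, the fixed-$\e$ residue computation is incorrect as stated. You claim that the purely singular contribution to $\int_{\partial B(z_i^\e,r)}(T_{u_\e}\nu)\cdot\zeta\,ds$ vanishes by angular symmetry. It vanishes only against the constant $\zeta(z_i^\e)$; since the singular part of $T_{u_\e}\nu$ is of size $r^{-2}$ on a circle of length $2\pi r$, the first-order Taylor term of $\zeta$ contributes a \emph{finite nonzero} limit, of the form $\pi (d_i^\e)^2\,\dive\zeta(z_i^\e)$ (a computation with $(T\nu)_1=(d_i^\e)^2 x/|z-z_i^\e|^3$ makes this explicit). This vortex self-interaction does not cancel against the cross term, so $\int_{\Omega\setminus\bigcup_i B(z_i^\e,r)}\langle T_{u_\e},D\zeta\rangle$ does \emph{not} tend to zero, and the family of balls $\{\bigcup_i B(z_i^\e,r)\}_{r>0}$ does not witness Definition \ref{divergencefree} in the naive way you use it. This is precisely why that definition integrates over $\Omega\setminus F_\delta$ with $F_\delta=\zeta^{-1}(\zeta(E_\delta))$ (so that $\zeta$ is constant on each component of $\partial F_\delta$ and only circulations of $T$ appear), and why the paper instead verifies the equivalent Definition \ref{second}: the vanishing of the flux $\int_{\partial B(z_i,\delta)}T_j\cdot\nu$ for every $\delta>0$, with no test function multiplying the integrand. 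There the symmetry argument for the singular part is exact, the harmonic part contributes zero because its stress tensor is genuinely divergence-free, and the cross term equals $c\,d_i\,\nabla H_i(z_i)$ by the mean value property, which vanishes exactly by \eqref{syspointvortices}.

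Second, the limit passage, which you rightly identify as the main obstacle, is left as a programme rather than a proof, and the programme as sketched does not run. Hypothesis \eqref{9} is only weak-$*$ convergence of the vorticity measures; it carries no "accompanying energy estimates," and no uniform $L^1$ bound on $T_{u_\e}$ off shrinking balls follows from it. Moreover $W^{1,p}_{loc}$ convergence with $p<2$ would not give $T_{u_\e}\to T_u$ in $L^1$, since $T_u$ is quadratic in $\nabla u$. The paper's mechanism is different in both the choice of exceptional sets and the source of the estimates: it works with $v_N=u_N-u$ and $\alpha_N=\mu_N-\mu$, uses the embedding $(\mathcal{C}^0(\Omega'))^*\hookrightarrow W^{-1,p}(\Omega')$ for $p<2$ together with elliptic regularity to get $\|v_N\|_{W^{1,p}}\to 0$, takes as exceptional sets the superlevel sets $F_N=\{|v_N|\ge\delta_N\}$ (not balls around the vortices) whose $p$-capacity is controlled by $\|v_N\|_{W^{1,p}}^p/\delta_N^p$, and then a truncation and integration-by-parts argument yields $\|\nabla v_N\|_{L^2(\Omega\setminus F_N)}\to 0$; convergence in $L^2_\delta$ of $u_N$ and $\nabla u_N$ is exactly what is preserved under the Sandier--Serfaty stability result for divergence-free-in-finite-part fields. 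Your concluding paragraph on the case $\mu\in H^{-1}(\Omega)$ is essentially the paper's argument and is fine once the preceding steps are repaired.
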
 

Thanks to the previous theorem we see that studying the conditions \eqref{1}, \eqref{3}, \eqref{2} can be useful to obtain information  about the vorticity of a stationary system of point vortices when the number of vortices tends to infinity. The rest of this subsection is devoted to the definitions needed in the statement of Theorem \ref{pointvortex} and its proof. The definitions and some results are taken from \cite{SandierSerfaty} Chapter 13.

%\begin{definition}\label{divergencefree} [Divergence-free in finite part]
%Assume $X$ is a vector field in $\Omega$. we say that $X$ is divergence-free in finite part if there exists a family of sets $\{E_\delta\}_{\delta>0}$ such that
%\begin{itemize}
%\item[1.]For any compact $K \subset \Omega$, we have $\lim_{\delta \rightarrow 0} \capa_1(K\cap E_\delta)=0.$
%\item[2.] For every $\delta >0$, $X \in L^1(\Omega \setminus E_\delta).$
%\item[3.] For every $\zeta \in C^\infty_c(\Omega)$,
%$$\int_{\Omega \setminus F_\delta} X\cdot \nabla \zeta =0$$
%where $F_\delta=\zeta^{-1}(\zeta(E_\delta)).$
%If $T$ is a 2-tensor with coefficients $\{ T_{ij} \}_{1\leq i,j\leq 2}$, we say that $T$ is divergence free in finite part if the vectors $T_i=(T_{i1},T_{i2})$ are, for $i=1,2$.
%\end{itemize}
%\end{definition}
%In this definition we denoted by $\capa_1$ the $1$-capacity of a set $E \subset \R^2$ and we recall from Evans-Gariepy \cite{EvansGariepy} that the $p$-capacity  ($1\leq p <2$) of a set $E$ is defined as 
%
%$$\capa_p(E)= \inf\{\int_{\R^2} |\nabla \varphi |^p; \varphi \in L^{p^*}(\R^2), \nabla \varphi \in L^p(\R^2), A \subset int(\varphi \geq 1) \},$$
%where $int(A)$ denotes the interior of $A$ and $p^*=\frac{2p}{2-p}$.
In this section we use an equivalent definition of divergence-free in finite part:
 
\begin{definition}\label{second}
Let $X$ be a vector field in $\Omega$, and $z_1,...z_N$ in $\Omega$ such that $X \in C^0(\Omega \setminus\{z_1,...,z_N\})$. We say that $X$ is divergence free in finite part if 
\begin{itemize}
\item[1.] $\dive(X) =0$ in $\mathcal{D}'(\Omega \setminus\{z_1,...,z_N\})$.
\item[2.] $\int_{\partial B(z_i,\delta)} X.\nu_i =0$, $\forall \ i=1,...,N, \ \forall \delta >0$ where $\nu$ denotes the outward unit normal to $\partial B(z_i,\delta)$.
\end{itemize}
\end{definition}

\noindent The equivalence between the two previous definitions can be proved using the coarea formula.
\begin{definition}
We say that $u$ is \textit{weakly stationary harmonic} if $T_u$ is divergence free in finite part.
\end{definition}

\begin{exa} $u(z)= \ln |z|$ is weakly stationary harmonic in $\R^2$.
\end{exa}
\begin{proof}
Let $z=x+iy$. We have that $\ln |z|$ is harmonic in $\R^2\setminus\{0\}$ and smooth in $\R^2\setminus \{0\}$. Then it is stationary harmonic in  $\R^2\setminus\{0\}$, that is $\dive(T_u)=0$ in $\R^2\setminus\{0\}$, with $T_u=\begin{pmatrix}
\partial_xu^2-\partial_yu^2 & 2\partial_xu \partial_y u \\
2\partial_xu \partial_y u & \partial_yu^2-\partial_xu^2
\end{pmatrix}$.
We want to show the second condition in the previous definition. Let $\delta>0$ we have 
$$\partial_xu^2-\partial_yu^2 =\frac{x^2-y^2}{|z|^2} \text{ and } 2\partial_xu \partial_y u =\frac{xy}{|z|^2}.$$
The outward unit normal to $\partial B(0,\delta)$ is $\nu=\frac{z}{|z|}$. Hence, for all $\delta>0$ small:
\begin{eqnarray}
\int_{\partial B(0,\delta)}(\partial_xu^2-\partial_yu^2)\nu_1+ (2\partial_xu \partial_y u)  \nu_2& 
=&\int_{\partial B(0,\delta)} \frac{x (x^2 + y^2)}{ |z| ^3} \nonumber \\
&=& \delta \int_0^{2\pi} \cos(\varphi)d\varphi=0. \nonumber 
\end{eqnarray}

\noindent The integral of the other component of $T_h$ is computed the same way and we also find that it is equal to $0$. Thus $u$ is weaky stationary harmonic.
\end{proof}

We can associate to a system of point vortices \eqref{syspointvortices} the measure $ \sum_{i=1}^N d_i \delta_{z_i}$, where we denoted by $\delta_{z_0}$ the Dirac mass in $z_0$. Let us consider the particular solution of 
\begin{equation}
\Delta u=\frac{2\pi}{M_N} \sum_{i=1}^N d_i \delta_{z_i}
\end{equation}
given by 
\begin{equation}
u(z)= \frac{1}{M_N} \sum_{i=1}^N d_i \ln |z-z_i|, \ \text{where} \ M_N= \sum_{i=1}^N |d_i|.
\end{equation}

\begin{proposition}
The points $(z_i)_{1\leq i \leq N} \in \R^2$ form a stationary system of point vortices if and only if the function 
$u(z)= \frac{1}{M_N} \sum_{i=1}^N d_i \ln |z-z_i|$ is weakly stationary harmonic.
\end{proposition}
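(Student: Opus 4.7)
The plan is to verify the two conditions of Definition \ref{second} for $X = T_u$, using the singular points $z_1, \ldots, z_N$ as the distinguished set. Away from the $z_i$, the function $u$ is a finite sum of logarithms, hence smooth and harmonic, so $T_u \in C^\infty$ there and $\dive(T_u) = 0$ pointwise (being smooth and harmonic is stronger than stationary harmonic). Thus condition 1 of Definition \ref{second} is automatic, and the whole content of the equivalence lies in condition 2, namely
$$\int_{\partial B(z_i,\delta)} T_u \cdot \nu_i \, d\sigma = 0 \quad \text{for every } i \text{ and every } \delta > 0.$$

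The key is a localization near each $z_i$. Near $z_i$ I would write $u(z) = a_i \ln|z-z_i| + v_i(z)$, where $a_i = d_i/M_N$ and
$$v_i(z) = \frac{1}{M_N}\sum_{j \neq i} d_j \ln|z - z_j|$$
is smooth and harmonic in a neighborhood of $z_i$. Because $T_u$ is quadratic in $\nabla u$, the splitting $\nabla u = \nabla(a_i \ln|\cdot - z_i|) + \nabla v_i$ gives $T_u = T^{(1)} + T^{(2)} + T^{(\text{cross})}$ with an obvious meaning. The flux of $T^{(1)}$ on $\partial B(z_i,\delta)$ vanishes for every $\delta$ by the example $u(z) = \ln|z|$ proved just above. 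The flux of $T^{(2)}$ on $\partial B(z_i,\delta)$ vanishes as well: $v_i$ is smooth and harmonic on the whole ball $B(z_i,\delta)$, so $T_{v_i}$ is classically divergence-free there, and the divergence theorem gives $\int_{\partial B(z_i,\delta)} T_{v_i}\cdot \nu_i = \int_{B(z_i,\delta)} \dive(T_{v_i}) = 0$. All the content is therefore in the cross term.

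To compute the flux of $T^{(\text{cross})}$, I parametrize $\partial B(z_i,\delta)$ by $z = z_i + \delta e^{i\theta}$, so that $\nabla(a_i \ln|z-z_i|) = (a_i/\delta)(\cos\theta,\sin\theta)$ and $\nu_i = (\cos\theta,\sin\theta)$. A direct, short bilinear computation of the entries of $T^{(\text{cross})}$ (using the explicit formulas for $T_u$ given in the statement of Theorem \ref{pointvortex}) collapses the $\theta$-dependence coming from $X_1 = (a_i/\delta)\hat r$, and yields the pointwise identity
$$T^{(\text{cross})}\!\cdot\nu_i \;=\; -\frac{a_i}{\delta}\,\nabla v_i(z)\qquad \text{on } \partial B(z_i,\delta).$$
Integrating over $\partial B(z_i,\delta)$ against $d\sigma = \delta\, d\theta$ and using the mean value property for the harmonic functions $\partial_x v_i,\partial_y v_i$ produces
$$\int_{\partial B(z_i,\delta)} T_u \cdot \nu_i\, d\sigma = -2\pi a_i \,\nabla v_i(z_i) = -\frac{2\pi}{M_N}\sum_{j\neq i} d_i d_j\,\frac{z_i-z_j}{|z_i-z_j|^2}.$$

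The conclusion is then immediate: condition 2 of Definition \ref{second} holds at every vortex and every radius $\delta$ if and only if $\sum_{j\neq i} d_j (z_i - z_j)/|z_i - z_j|^2 = 0$ for every $i$ (the factor $d_i/M_N$ being nonzero), which is exactly the stationary point vortex condition \eqref{syspointvortices}. The main obstacle is really the bookkeeping in the cross-term calculation; it is completely elementary but must be done carefully to exhibit the cancellation that leaves only $-(a_i/\delta)\nabla v_i$. A slightly slicker alternative, which I would present as a remark, is to use that $\omega_u$ is meromorphic near $z_i$ with $\omega_u(z) = a_i^2/(z-z_i)^2 + 2a_i g_i(z)/(z-z_i) + g_i(z)^2$ where $g_i = 2\partial_z v_i$, and to identify the two flux components with the real and imaginary parts of $2\pi\,\mathrm{Res}_{z_i}(\omega_u) = 8\pi a_i\,\partial_z v_i(z_i)$; vanishing of the residue is again equivalent to $\nabla v_i(z_i)=0$.
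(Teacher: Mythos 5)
Your proof is correct and follows essentially the same route as the paper: decompose $u$ near each $z_i$ as $a_i\ln|z-z_i|+v_i$ with $v_i$ harmonic, split the flux of $T_u$ into the pure logarithmic part (handled by the example), the smooth harmonic part, and a cross term whose flux reduces to $\nabla v_i(z_i)$ via the mean value property; the paper does the same computation, merely phrasing the last step as dividing by $\delta$ and letting $\delta\to 0$. The only blemish is a harmless typo in the final display (the prefactor should be $2\pi d_i/M_N^2$, not $2\pi/M_N$ times $d_i$), which does not affect the equivalence.
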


\textbf{Remark:} Note that if $u$ is not in $H^1(\Omega)$ then it does not make sense to say that $u$ is stationary harmonic that is why we need the notion of weak stationary harmonicity. 

\begin{proof}
We use Definition \ref{second}. Again away from the points $z_1,...,z_N$, $u$ is harmonic and smooth. Thus it is stationary harmonic. Near $z_1$ we have $$u(z)=\alpha_1 \ln|z-z_1|+H_1(z)$$ where $H_1(z):=\frac{1}{M_N}\sum_{i=2}^N d_i\ln|z-z_i|$ is harmonic near $z_1$ (in a neighborhood of $z_1$ which contains only $z_1$ and no other $z_i$) and $\alpha_1$ is a constant. Without loss of generality we can assume that $\alpha_1=1$ and $z_1=0$. We then have:
\begin{eqnarray}
\partial_xu^2-\partial_yu^2&=& \left(\frac{x}{|z|}+\partial_xH_1(z)\right)^2-\left(\frac{y}{|z|}+\partial_yH_1(z)\right)^2 \nonumber \\
&=&\frac{x^2-y^2}{|z|^2}+(\partial_xH_1 ^2 -\partial_yH_1^2)+2\left(\frac{x}{|z|}\partial_xH_1-\frac{y}{|z|}\partial_yH_1\right) \nonumber
\end{eqnarray}
\begin{eqnarray}
2\partial_x u \partial_y u&=&2\frac{xy}{|z|^2}+2\partial_xH_1\partial_yH_1+2\left(\frac{x}{|z|}\partial_yH_1 +\frac{y}{|z|}\partial_xH_1\right) \nonumber 
\end{eqnarray}

Thus
\begin{eqnarray}
\int_{\partial B(0,\delta)} (\partial_xu^2-\partial_yu^2)\nu_1+ (2\partial_xu \partial_y u)  \nu_2 =
\int_{\partial B(0,\delta)} \frac{x^2-y^2}{|z|^2}\nu_1+2\frac{xy}{|z|^2}\nu_2 \nonumber \\
+\int_{\partial B(0,\delta)}(\partial_xH_1 ^2 -\partial_yH_1 ^2)\nu_1+(2\partial_xH_1\partial_yH_1)\nu_2 \nonumber \\
+ \int_{\partial B(0,\delta)}2\left(\frac{x}{|z|}\partial_xH_1-\frac{y}{|z|}\partial_yH_1\right)\nu_1+2\left(\frac{x}{|z|}\partial_yH_1 +\frac{y}{|z|}\partial_xH_1\right) \nu_2. \nonumber
\end{eqnarray}

The first term in this sum is zero because $\ln|z|$ is weakly stationary harmonic. The second term is also zero because $H$ is harmonic, smooth, and hence stationary harmonic and weakly stationary harmonic. For the third term we can  use the fact that the normal on $\partial B(0,\delta)$ is $\nu=\frac{z-z_1}{|z-z_1|}$ to prove that it is equal to 
$2\int_{\partial B(0,\delta)} \partial_xH_1$. \\

\noindent Hence if $u$ is weakly stationary harmonic this term must be equal to zero for all $\delta$. Then dividing this quantity by $\delta$ and letting $\delta$ go to $0$ we find that $\partial_xH_1(z_1)=0$. With the same method applied to the other component of $T_u$ we obtain 
$$\int_{\partial B(0,\delta)} (2\partial_xu \partial_yu) \nu_1+ (\partial_yu^2-\partial_xu^2)\nu_2 =2\int_{\partial B(0,\delta)} \partial_yH_1.$$ Thus if $u$ is weakly stationary harmonic we find that $\nabla H_1(0)=0$. By repeating this argument near each $z_i$, we obtain that if $u$ is weakly stationary harmonic then $z_1,...,z_N$ form a stationary system of point vortices:
$$\sum_{j=1,j\neq i}^N d_j\frac{z_i-z_j}{|z_i-z_j|^2}=0 \ \forall i=1,...,N.$$ 
\end{proof}

We now prove Theorem \ref{pointvortex}. Let $\mu$ be a Radon measure which satisfies \eqref{9}, \eqref{10}. We set

\begin{equation}\label{limit}
u_{N^\e}:=\frac{1}{M_{N^\e}} \sum_{i=1}^N d_i^\e\ln|z-z^\e_i|
\end{equation}

\noindent with $M_{N^\e}=\sum_{i=1}^{N^\e}|d_i^\e|$. We want to prove that $u_{N^\e}$ converges to a function $u$ when $\e$ goes to $0$ such that $u$ satisfies $\Delta u= \mu$ and $u$ is weakly stationary harmonic. However we need to have a notion of convergence which preserves the notion of weak stationary harmonicity. This is the object of the following definition.

\begin{definition}[\cite{SandierSerfaty}]
We say (with some abuse of notation) that a sequence $(X_n)_n$ in $L^1(\Omega)$ \textit{converges in $L^1_\delta(\Omega)$} to $X$ if $X_n \rightarrow X$ in $L^1_{loc}(\Omega)$ except on a set of arbitrarily small 1-capacity, or precisely if there exists a family of sets $(E_\delta)_{\delta>0}$ such that for any compact $K \subset \Omega,$
\begin{equation}
\lim_{\delta \rightarrow 0} \capa_1(K \cap E_\delta)=0, \ \ \text{and} \ \forall \delta>0 \ \lim_{n\rightarrow +\infty} \int_{K \setminus E_\delta}|X_n-X|=0.
\end{equation}

\noindent We define similarly the convergence in $L^2_\delta$ by replacing $L^1$ by $L^2$ in the above.
\end{definition}

\begin{proposition}[\cite{SandierSerfaty}]
Assume $(X_n)_{n\in \mathbb{N}}$ is a sequence of divergence-free in finite part vector fields which converges to $X$ in $L^1_{\delta}(\Omega)$. Then $X$ is divergence free in finite part.
\end{proposition}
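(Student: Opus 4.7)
My plan is to verify the three conditions of Definition \ref{divergencefree} for $X$, using the family $\{E_\delta\}_{\delta>0}$ witnessing the $L^1_\delta$-convergence $X_n \to X$ (enlarged by a diagonal construction if necessary). Conditions 1 and 2 transfer almost automatically: the capacity estimate $\capa_1(K \cap E_\delta) \to 0$ is built into the convergence hypothesis, and $X \in L^1(\Omega \setminus E_\delta)$ follows from the $L^1(K \setminus E_\delta)$-convergence on each compact $K \subset \Omega$.

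The substance of the proof is condition 3. Fix $\zeta \in C_c^\infty(\Omega)$ and set $F_\delta = \zeta^{-1}(\zeta(E_\delta))$. Because $F_\delta \supseteq E_\delta$, the $L^1_\delta$-convergence yields $\int_{\Omega \setminus F_\delta} X \cdot \nabla \zeta = \lim_n \int_{\Omega \setminus F_\delta} X_n \cdot \nabla \zeta$, so it suffices to show the right-hand side vanishes. Each $X_n$, being divergence-free in finite part, carries its own witness family $\{E^n_{\delta'}\}_{\delta'>0}$ satisfying $\int_{\Omega \setminus F^n_{\delta'}} X_n \cdot \nabla \zeta = 0$ where $F^n_{\delta'} = \zeta^{-1}(\zeta(E^n_{\delta'}))$. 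A direct set decomposition gives
\[
\int_{\Omega \setminus F_\delta} X_n \cdot \nabla \zeta = -\int_{F_\delta \setminus F^n_{\delta'_n}} X_n \cdot \nabla \zeta + \int_{F^n_{\delta'_n} \setminus F_\delta} X_n \cdot \nabla \zeta,
\]
for any choice of $\delta'_n > 0$, and the task becomes controlling both error terms as $n \to \infty$.

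To eliminate the second error term I would perform a diagonal extraction: pick $\delta'_n$ so that $\capa_1(K \cap E^n_{\delta'_n}) \to 0$ and replace $E_\delta$ by $\tilde E_\delta := E_\delta \cup E^{n(\delta)}_{\delta'_{n(\delta)}}$ with $n(\delta)$ suitably large so that $\capa_1(K \cap \tilde E_\delta) \to 0$ still holds. Then $F^{n(\delta)}_{\delta'_{n(\delta)}} \subseteq \tilde F_\delta := \zeta^{-1}(\zeta(\tilde E_\delta))$, so the second error vanishes identically and only the first error survives.

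The main obstacle is the surviving integral $\int_{\tilde F_\delta \setminus F^{n(\delta)}_{\delta'_{n(\delta)}}} X_{n(\delta)} \cdot \nabla \zeta$. This is an integral over a union of entire $\zeta$-level sets passing through $E_\delta$, which need not have small Lebesgue measure even though $\capa_1(K \cap E_\delta) \to 0$. I would tackle it via the coarea formula: slicing by values of $\zeta$, the integral reduces to $\int_{\zeta(E_\delta)} \Psi_{n(\delta)}(t)\,dt$, where $\Psi_n(t) = \int_{\{\zeta = t\}} X_n \cdot \tfrac{\nabla \zeta}{|\nabla \zeta|}\,d\mathcal{H}^1$ is the flux of $X_n$ across the level set. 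Since $\zeta$ is Lipschitz on its compact support, $\capa_1(K \cap E_\delta) \to 0$ forces $\mathcal{L}^1(\zeta(E_\delta)) \to 0$, and the $L^1_\delta$-convergence provides uniform $L^1_{\mathrm{loc}}$ control of the fluxes $\Psi_n$ outside a small $t$-set; this forces the integral to vanish as $\delta \to 0$, completing the verification of condition 3.
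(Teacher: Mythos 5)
First, a structural remark: the paper offers no proof of this proposition --- it is quoted from \cite{SandierSerfaty} and used as a black box --- so your argument can only be judged on its own terms. Your architecture is the right one: reuse the family $\{E_\delta\}$ witnessing the $L^1_\delta$-convergence, enlarge it diagonally by the witness families $\{E_{\delta'}^n\}$ of the $X_n$ so that conditions 1 and 2 of Definition \ref{divergencefree} transfer (modulo the small point that the convergence only yields $X\in L^1(K\setminus E_\delta)$ for compact $K$, while condition 2 asks for $L^1(\Omega\setminus E_\delta)$), pass to the limit in $\int_{\Omega\setminus \tilde F_\delta}X_n\cdot\nabla\zeta$, and slice the surviving error by the coarea formula. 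These are exactly the tools the Sandier--Serfaty framework is built on.

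The gap is in the last step, and it is twofold. (i) What you conclude is that $\int_{\Omega\setminus\tilde F_\delta}X\cdot\nabla\zeta\to 0$ as $\delta\to 0$; condition 3 of Definition \ref{divergencefree} requires this integral to be \emph{exactly zero for every} $\delta>0$, so an asymptotic statement in $\delta$ does not verify the definition. (ii) The mechanism you propose cannot even deliver the asymptotic statement: after slicing, the surviving term is $\int_{\zeta(\tilde E_\delta)\setminus\zeta(E^{n(\delta)}_{\delta'_{n(\delta)}})}\Psi_{n(\delta)}(t)\,dt$, an integral taken \emph{over} the small exceptional $t$-set $\zeta(\tilde E_\delta)$, whereas the $L^1_\delta$-convergence controls $X_n-X$ only \emph{off} $E_\delta$; the level sets $\{\zeta=t\}$ with $t\in\zeta(\tilde E_\delta)$ do pass through $E_\delta$, the index $n(\delta)$ changes with $\delta$, and no equi-integrability of the fluxes $\Psi_n$ is available (each $X_n$ is merely in $L^1$ off its own bad set), so smallness of $\mathcal{L}^1(\zeta(\tilde E_\delta))$ alone gives nothing. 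The missing idea is that this term is in fact \emph{identically zero}: applying condition 3 for $X_n$ to the test functions $\chi\circ\zeta$ for arbitrary smooth $\chi$ and slicing by coarea shows that the flux $\Psi_n(t)$ vanishes for a.e.\ $t\notin\zeta(E^n_{\delta'})$, and your domain of integration excludes precisely $\zeta(E^{n(\delta)}_{\delta'_{n(\delta)}})$. Once one has this pointwise-in-$t$ reformulation (flux of $X_n$ across a.e.\ level set avoiding its bad set is zero), combined with the a.e.-$t$ convergence of fluxes that coarea extracts from $\int_{K\setminus E_\delta}|X_n-X|\,|\nabla\zeta|\to 0$, one obtains that the flux of $X$ across a.e.\ level set of $\zeta$ vanishes, hence $\int_{\Omega\setminus F_\delta}X\cdot\nabla\zeta=0$ exactly, for every $\delta$.
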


\begin{corollary}
Assume that $u_N$  is a sequence of weakly stationary harmonic functions such that $u_N$ converges to $u$ in $L^2_\delta(\Omega)$ and $\nabla u_N$ converges in $L^2_\delta(\Omega)$ then $u$ is weakly stationary harmonic.
\end{corollary}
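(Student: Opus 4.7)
The plan is to reduce everything to the preceding proposition, applied row by row to the stress--energy tensor. Writing
\[
X_i^N := \bigl((T_{u_N})_{i1},(T_{u_N})_{i2}\bigr), \qquad i=1,2,
\]
the weak stationary harmonicity of $u_N$ says precisely that each $X_i^N$ is divergence-free in finite part. Hence if I can verify that $X_i^N \to X_i$ in $L^1_\delta(\Omega)$, where $X_i$ denotes the $i$-th row of $T_u$, the preceding proposition directly yields that each $X_i$ is divergence-free in finite part, and hence that $T_u$ is, i.e.\ that $u$ is weakly stationary harmonic.

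The core task is therefore to upgrade the $L^2_\delta$ convergence of $\nabla u_N$ to the $L^1_\delta$ convergence of the entries of $T_{u_N}$, which are quadratic in $\nabla u_N$. The hypothesis furnishes a family $(E_\delta)_{\delta>0}$ with $\capa_1(K\cap E_\delta)\to 0$ for every compact $K\subset \Omega$, and with $\nabla u_N \to \nabla u$ in $L^2(K\setminus E_\delta)$ for every fixed $\delta>0$. I will reuse this very family to witness the $L^1_\delta$ convergence of the $X_i^N$. On $K\setminus E_\delta$, the factorizations
\[
(\partial_a u_N)^2-(\partial_a u)^2=(\partial_a u_N-\partial_a u)(\partial_a u_N+\partial_a u),
\]
and $\partial_x u_N\partial_y u_N - \partial_x u\partial_y u = (\partial_x u_N-\partial_x u)\partial_y u_N + \partial_x u\,(\partial_y u_N-\partial_y u)$, combined with a Cauchy--Schwarz bound, reduce each entry of $T_{u_N}-T_u$ in $L^1(K\setminus E_\delta)$ to a product of a vanishing $L^2$-norm and a uniformly bounded one (the $L^2(K\setminus E_\delta)$-boundedness of $\nabla u_N\pm\nabla u$ follows from its $L^2$-convergence).

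Once this $L^1_\delta$ convergence is in place, the preceding proposition applied to $X_1^N$ and $X_2^N$ closes the argument. I do not foresee a genuine obstacle; the only bookkeeping point to watch is that a single family $(E_\delta)_{\delta>0}$ must simultaneously witness the convergence of both rows of the tensor, but this is automatic since the family produced by the $L^2_\delta$ hypothesis on $\nabla u_N$ already controls every scalar quadratic combination of components of $\nabla u_N$ on the complements $K\setminus E_\delta$. The conceptual content of the corollary is thus simply that the quadratic tensor $T_u$ is stable under $L^2_\delta$ convergence of gradients, which is precisely the reason the notion $L^2_\delta$ was introduced.
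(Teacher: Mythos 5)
Your argument is correct and is precisely the route the paper intends: the corollary is stated without proof as an immediate consequence of the preceding proposition, the implicit point being exactly your Cauchy--Schwarz step upgrading $L^2_\delta$ convergence of $\nabla u_N$ to $L^1_\delta$ convergence of the quadratic entries of $T_{u_N}$, row by row. Your write-up simply supplies the details the paper leaves to the reader.
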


Thus to prove Theorem \ref{pointvortex} we only need to prove that the functions $u_{N^\e}$ converge in $L^2_\delta(\Omega)$ to a function $u$ such that $\nabla u_{N^\e}$ converge to $\nabla u$ in $L^2_\delta(\Omega)$. 

\begin{proposition}
Let $\Omega$ be a bounded open set in $\R²$. Let $\mu$ be a Radon measure in $\Omega$ such that \eqref{9},\eqref{10} hold. Let $u_{N^\e}=\frac{1}{M_{N^\e}} \sum_{i=1}^N d_i^\e\ln|z-z^\e_i|$ then there exists $u$ such that 
$u_{N^\e}$ converge in $L^2_\delta(\Omega)$ to  $u$ and $\nabla u_{N^\e}$ converge to $\nabla u$ in $L^2_\delta(\Omega)$.
\end{proposition}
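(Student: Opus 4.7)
The plan is to use classical potential-theoretic compactness, in the spirit of \cite{SandierSerfaty} Chapter 13. First I would use \eqref{9} to extract a uniform bound on $M_{N^\e}$: applying the dual norm inequality to a continuous function $\varphi$ with $\|\varphi\|_\infty \leq 1$ and $\varphi(z_i^\e) = \mathrm{sgn}(d_i^\e)$, built by Tietze extension from the finite closed set $\{z_i^\e\}$, yields $2\pi M_{N^\e} \leq |\mu|(\Omega) + \e$. Consequently the Laplacians $\Delta u_{N^\e} = (2\pi / M_{N^\e}) \sum_i d_i^\e \delta_{z_i^\e}$ have total variation uniformly bounded by $2\pi$, and classical elliptic regularity for the Poisson equation with bounded Radon measure datum (\textit{cf.} Boccardo--Gallou\"et) gives a uniform bound on $(u_{N^\e})$ in $W^{1,p}_{\mathrm{loc}}(\Omega)$ for every $p \in [1,2)$.

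By Rellich--Kondrachov, a subsequence of $(u_{N^\e})$ then converges strongly in $L^p_{\mathrm{loc}}$ and weakly in $W^{1,p}_{\mathrm{loc}}$ to some $u$, for every $p < 2$. Passing to the distributional limit in $\Delta u_{N^\e}$ and using \eqref{9} identifies $\Delta u$ as the appropriate rescaling of $\mu$, which shows that the limit is independent of the extracted subsequence, so the whole sequence converges.

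The delicate part is to upgrade from $L^p_{\mathrm{loc}}$ convergence to $L^2_\delta$ convergence of both $u_{N^\e}$ and $\nabla u_{N^\e}$: the gradient of a logarithmic potential at a point mass is not locally $L^2$, so $L^2_{\mathrm{loc}}$-convergence is hopeless. The idea is to excise small $1$-capacity neighborhoods of the vortices. For each fixed $\delta > 0$ and every $\e$ small enough, I would cover the atoms $\{z_i^\e\}$ by open disks whose combined $1$-capacity is at most $\delta$; this is possible because $\capa_1(B(z,r)) \leq Cr$ and the number of atoms, weighted by $|d_i^\e|$, is controlled by the uniform bound $M_{N^\e} \leq C$. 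Taking $E_\delta$ to be a Kuratowski upper limit (with a small thickening) of these covers as $\e \to 0$ produces an $\e$-independent family with $\capa_1(K \cap E_\delta) \to 0$ and with $\supp(\Delta u_{N^\e}) \subset E_\delta$ for all $\e$ sufficiently small. On $\Omega \setminus E_\delta$ the functions $u_{N^\e}$ are harmonic and, by the maximum principle, uniformly bounded; standard interior elliptic regularity then promotes the $L^p$-convergence to $L^2$-convergence of both $u_{N^\e}$ and $\nabla u_{N^\e}$ on $K \setminus E_\delta$.

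The main obstacle is this last step: constructing exceptional sets $E_\delta$ which are $\e$-independent, contain the moving singularities of every $u_{N^\e}$ for $\e$ small, and simultaneously have $1$-capacity going to zero. The construction rests on the uniform bound $M_{N^\e} \leq C$ from the first step, via a Vitali-type covering combined with a diagonal extraction over $\delta$ and $\e$.
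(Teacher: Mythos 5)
Your opening steps are essentially sound and run parallel to the first part of the paper's argument: the measures $\Delta u_{N^\e}=\frac{2\pi}{M_{N^\e}}\sum_i d_i^\e\delta_{z_i^\e}$ have total variation exactly $2\pi$, which gives uniform $W^{1,p}_{\mathrm{loc}}$ bounds for every $p<2$ and a distributional limit $u$. (Two caveats: identifying $\Delta u$ does not identify $u$ -- two subsequential limits could differ by a harmonic function -- so your uniqueness claim is incomplete; the paper instead writes $u_{N^\e}=\ln|\cdot|\ast\mu_{N^\e}$ and passes to the limit in the convolution, which gives convergence of $u_{N^\e}$ itself. Also, your bound $2\pi M_{N^\e}\le|\mu|(\Omega)+\e$ rests on reading \eqref{9} without the $1/M_{N^\e}$ normalization; since $|d_i^\e|\ge 1$ it would force $N^\e$ to be uniformly bounded, which is exactly the case the paper dismisses as ``immediate''. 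The intended and interesting regime, as in the Ginzburg--Landau analogue, is $N^\e\to+\infty$ with the \emph{normalized} measures converging, and then no bound on $M_{N^\e}$ or on the number of atoms is available.)

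The genuine gap is in the step you yourself flag as delicate, and the excision strategy does not close it when $N^\e\to+\infty$. One can indeed arrange, by a diagonal choice of summable radii, an $\e$-independent union of disks $E_\delta$ containing all the atoms with $\capa_1(E_\delta)\le C\delta$; but the radii must then shrink to $0$ as $\e\to 0$, so the boundary values of $u_{N^\e}$ on the excised circles behave like $\frac{|d_i^\e|}{M_{N^\e}}\ln r_i^\e$ and the functions are \emph{not} uniformly bounded on $K\setminus E_\delta$ -- the maximum principle gives nothing. Worse, interior elliptic estimates only control compact subsets of the open set $\Omega\setminus\overline{E_\delta}$, whereas $K\setminus E_\delta$ contains points lying between neighbouring vortices, at distance comparable to the inter-vortex spacing from the atoms; there $|\nabla u_{N^\e}|$ is genuinely of the order of the inverse spacing and diverges. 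The $L^2$ convergence of $\nabla u_{N^\e}$ off the bad set is a quantitative energy estimate, not a soft consequence of harmonicity. The paper's proof (following Proposition 13.2 of Sandier--Serfaty) takes a different route: the exceptional set is attached to the \emph{function}, not to the vortices. Setting $v_N=u_N-u$, $\alpha_N=\mu_N-\mu$ and $\delta_N=\bigl(\|\alpha_N\|_{W^{-1,p}}/(\|\alpha_N\|_{\mathcal{C}^0(\Omega')^*}+1)\bigr)^{1/2}$, one takes $F_N=\{|v_N|\ge\delta_N\}$; the capacitary Chebyshev inequality $\capa_p(F_N)\le C\|v_N\|^p_{W^{1,p}}/\delta_N^p$ together with $\|v_N\|_{W^{1,p}}\le C\|\alpha_N\|_{W^{-1,p}}$ shows $\capa_1(F_N)\to 0$, and testing $\Delta v_N=\alpha_N$ against the truncation of $\varphi v_N$ at height $\delta_N$ yields $\int_{\Omega\setminus F_N}|\nabla v_N|^2\le C\delta_N(\|v_N\|_{W^{1,p}}+\|\alpha_N\|_{\mathcal{C}^0(\Omega')^*})\to 0$, the point being that the measure term is paired against a function bounded by $\delta_N$. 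The sets $E_\delta=\cup_{N>1/\delta}F_N$ are then assembled along a subsequence with $\sum_N\capa_1(F_N)<+\infty$. Without an estimate of this truncation type, your argument does not reach the conclusion.
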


\begin{proof}
We let 
\begin{equation}\label{mu_N}
\mu_{N^\e}:= \frac{2\pi}{M_{N^\e}}\sum_{i=1}^{N^\e} \delta_{z_i^\e}.
\end{equation}
Since $\Omega$ is bounded the measure $\mu_{N^\e}$ has compact support and  we can then write 

\begin{equation}\label{u_N}
u_{N^\e}=\ln|z|\ast \mu_{N^\e}
\end{equation}
\noindent where $\ast$ denotes the convolution product. 
Then for all $\varphi$ in $\mathcal{C}^\infty _c(\R^2)$ we have 
\begin{eqnarray}
\langle u_{N^\e},\varphi\rangle = \langle \ln|z| \ast \mu_{N^\e}, \varphi \rangle 
= \langle \mu_{N^\e},\ln|z| \ast \varphi\rangle. \nonumber
\end{eqnarray}
Now we let $\e$ go to $0$, by hypothesis $\mu_{N^\e}$ converges to $\mu$ in $(\mathcal{C}^0(\Omega))^*$. Hence 
$$\langle u_{N^\e},\varphi \rangle \rightarrow \langle \mu, \ln|x| \ast \varphi \rangle. $$

\noindent This proves that $u_{N^\e}$ converges to some $u$ in the sense of distributions. \\

In the rest of the proof we drop the subscript $\e$ and consider the limit $N\rightarrow +\infty$ (if $N_\e$ stays bounded the proof is immediate). We follow closely the proof of Proposition 13.2 in \cite{SandierSerfaty}. We choose a bounded open set $\Omega'$ such that $\Omega \subset \subset \Omega'$. We can define $\mu_N$, $\mu$, $u_N$ and $u$ in $\Omega'$ (using formulas \eqref{u_N}, \eqref{mu_N} for $u_N$ and $\mu_N$ valid in $\R^2$ and passing to the limit in $\Omega'$). In $\Omega'$ we set 
\begin{equation} v_N=u_N-u, \ \ \ \ \alpha_N=\mu_N-\mu.
\end{equation}
We then have 
\begin{equation}\label{laplace=mesure}
\Delta v_N =\alpha_N \ \ \text{in} \ \Omega'.
\end{equation} 

\noindent It holds that $\displaystyle{\lim_{N \rightarrow +\infty} \|\alpha_N \|_{\mathcal{C}^0(\Omega')^*} =0}$. But since we have $W^{1,q}(\Omega') \hookrightarrow \mathcal{C}^0(\Omega')$ for $q>2$ we also have $\mathcal{C}^0(\Omega')^* \hookrightarrow W^{-1,p}(\Omega')$ for $p<2$. Thus we obtain $$\displaystyle{\lim_{N \rightarrow +\infty} \|\alpha_N \|_{W^{-1,p}(\Omega')}}=0 \ \text{for} \ p<2.$$ Now we let 

\begin{equation}\label{delta et F}
\delta_N= \left( \frac{\|\alpha_N \|_{W^{-1,p}(\Omega')}}{\|\alpha_N\|_{\mathcal{C}^0(\Omega')^*}+1} \right) ^{1/2} , \ \ F_N=\{x\in \Omega ; |v_N| \geq \delta_N \}.
\end{equation}

\noindent We have the following bound on the $p$-capacity of $F_N$ (\textit{cf.} \cite{EvansGariepy} p.158)
\begin{equation}\label{capacity}
\capa_p(F_N) \leq C \frac{\|v_n\|_{W^{1,p}(\Omega)}^p}{\delta_N^p}.
\end{equation}

\noindent We note note that by elliptic regularity theory $\|v_N\|_{W^{1,p}(\Omega)} \leq C \|\alpha_N\|_{W^{-1,p}(\Omega')}$ because of \eqref{laplace=mesure} and because $\Omega \subset \subset \Omega'$. Thus from \eqref{delta et F} and \eqref{capacity} we find that
$$\capa_p(F_N) \leq C \|\alpha_N \|_{W^{-1,p}(\Omega')}^{p/2}(\|\alpha_N\|_{\mathcal{C}^0(\Omega')^*}+1)^{p/2},$$

\noindent and therefore tends to 0 as $N$ goes to infinity. This implies in turn that $$\displaystyle{\lim_{N \rightarrow +\infty} \capa_1(F_N)=0}.$$ Now we use a cut-off function $\varphi \in \mathcal{C}^\infty(\overline{\Omega'})$ such that $|\varphi(x)|\leq 1$ for all $x\in \Omega'$, $\varphi \equiv 1$ in $\Omega$ and $\varphi=0$ on $\partial \Omega'$. We also set 
$$\tilde{F_N}=\{x\in \Omega' ; |\varphi v_N| \geq \delta_N \}. $$
We have that $F_N \subset \tilde{F_N}$ and $\tilde{F_N}\cap \Omega =F_N$ since $\varphi \equiv 1$ in $\Omega$. We  use the following truncated function:

\begin{equation}
\overline{\varphi v_N}= \left\{ \begin{array}{rcll}
\varphi v_N & \text{if} & |\varphi v_N| \leq \delta_N, \\
\delta_N & \text{if} & |\varphi v_N| >\delta_N.
\end{array}
\right.
\end{equation}

\noindent From a property of Sobolev functions (see \textit{e.g.} Lemma 7.7 in \cite{Gilbarg}), we have $\nabla (\overline {\varphi v_N})=0$ almost everywhere in $\tilde{F_N}$. We thus obtain:

\begin{eqnarray}
\int_{\Omega \setminus F_N} |\nabla v_N|^2 & \leq & \int_{\Omega' \setminus \tilde{F_N}}|\nabla (\varphi v_N )|^2 \nonumber \\
& \leq & \int_{\Omega'} \nabla (\varphi v_N) \cdot \nabla (\overline{\varphi v_N)} \nonumber \\
& \leq & \int_{\Omega'} -\Delta (\varphi v_N) \overline{\varphi v_N}. \nonumber 
\end{eqnarray}
The last inequality being true since $\varphi=0$ on $\partial \Omega'$. Using the Leibniz formula we obtain that 
$$ \Delta (\varphi v_N)= \Delta \varphi v_N +2 \nabla \varphi \cdot \nabla v_N +\varphi \Delta v_N. $$
Hence
\begin{eqnarray}
\int_{\Omega \setminus F_N} |\nabla v_N|^2 & \leq & \int_{\Omega '} |\Delta \varphi v_N (\overline{\varphi v_N})| +\int_{\Omega'} 2|\nabla \varphi | |\nabla v_N| |\overline{\varphi v_N}| +\int_{\Omega'} |\overline{\varphi v_N}|d\alpha_N \nonumber
\end{eqnarray}
where we used the fact that $\Delta v_N =\alpha_N$ in $\Omega'$. Now we use H\"older inequality to obtain that

\begin{eqnarray}
\int_{\Omega \setminus F_N} |\nabla v_N|^2 & \leq & C \delta_N \left(\int_{\Omega'} |v_N|^p\right)^{1/p} +C \delta_N \left(\int_{\Omega'} |\nabla v_N|^p\right)^{1/p} +\delta_N  \|\alpha_N\|_{\mathcal{C}^0(\Omega)^*} \nonumber \\
& \leq & C\delta_N \left(\|v_N\|_{W^{1,p}(\Omega')}+\|\alpha_N \|_{\mathcal{C}^0(\Omega')^*}\right). \nonumber
\end{eqnarray}
Thus 
\begin{equation}\label{13.23}
\lim_{N \rightarrow +\infty} \|\nabla v_N \|_{L^2(\Omega \setminus F_N)} =0.
\end{equation}
 We can also see, from the definition of $F_N$ and because $\Omega$ is bounded that 
 \begin{equation}\label{13.24}
\lim_{N \rightarrow +\infty} \| v_N \|_{L^2(\Omega \setminus F_N)} =0.
\end{equation}
 
 We conclude as in \cite{SandierSerfaty}. Since $\lim_{n \rightarrow +\infty} \capa _1 (F_N)=0$, there is a subsequence, still denoted by $\{n\}$, such that $\sum_n \capa _1(F_N) < +\infty$. We define 
 
 $$E_\delta = \bigcup _{N > \frac{1}{\delta}} F_N. $$

Then $\capa _1(E_\delta)$ tends to zero as $\delta$ goes to zero since it is bounded above by the tail of a convergent series. Moreover, for any $\delta >0$ we have $F_N \subset E_\delta$ when $N$ is large enough and therefore \eqref{13.23} and \eqref{13.24} imply that 

$$\lim_{N \rightarrow +\infty } \| v_N \|_{L^2(\Omega \setminus E_\delta)}=\lim_{N \rightarrow +\infty } \| \nabla v_N \|_{L^2(\Omega \setminus E_\delta)}=0.$$
\end{proof}

This proposition proves point 1) and 2) of Theorem \ref{pointvortex}. The next proposition shows that if we add the hypothesis that $\mu$ is in $H^{-1}(\Omega)$, then $u$ weakly stationary harmonic implies $u$ stationary harmonic. 

\begin{proposition}[Proposition 13.1 in \cite{SandierSerfaty}]
Assume that $X$ is divergence-free in finite part in $\Omega$ and that $X$ is in $L^1(\Omega \setminus E)$. Then for every $\zeta\in C^{\infty}_c(\Omega)$,
$$\int_{\Omega\setminus F}X\cdot \nabla \zeta =0,$$
where $F=\zeta^-1(\zeta(E))$. In particular if $X$ is in $L^1(\Omega)$, then $F= \emptyset$ in the above and therefore $\dive X=0$ in $\mathcal{D}'(\Omega)$.
\end{proposition}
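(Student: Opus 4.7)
The strategy is to use the coarea formula to reduce the claimed 2D identity to a 1D identity about the flux of $X$ across level sets of $\zeta$, and then to exploit the freedom of choosing $\zeta$ to promote an integral vanishing to a pointwise vanishing of the flux.

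By hypothesis there is a family $\{E_\delta\}_{\delta > 0}$ from Definition \ref{divergencefree} (or its Definition \ref{second} counterpart). Fix $\zeta \in C_c^\infty(\Omega)$ and let $F_\delta = \zeta^{-1}(\zeta(E_\delta))$. Since $F_\delta$ is saturated with respect to the fibers of $\zeta$ and since $X \in L^1(\Omega \setminus F_\delta)$, the coarea formula rewrites the hypothesis as
\[
0 = \int_{\Omega \setminus F_\delta} X \cdot \nabla \zeta \, dx = \int_{\mathbb{R} \setminus \zeta(E_\delta)} \phi(t)\, dt, \qquad \phi(t) := \int_{\{\zeta = t\}} X \cdot \frac{\nabla \zeta}{|\nabla \zeta|}\, d\mathcal{H}^1.
\]
To promote this integral identity to a pointwise one, I would test against the reparametrizations $\zeta_\epsilon := \zeta + \epsilon\, g(\zeta)$, where $g$ is an arbitrary smooth function with $g(0) = 0$. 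For small $\epsilon$ the map $t \mapsto t + \epsilon g(t)$ is a smooth increasing bijection of $\mathbb{R}$, so $\zeta_\epsilon \in C_c^\infty(\Omega)$ and $\zeta_\epsilon^{-1}(\zeta_\epsilon(E_\delta)) = F_\delta$. Since $\nabla \zeta_\epsilon = (1 + \epsilon g'(\zeta))\nabla \zeta$, applying the hypothesis to $\zeta_\epsilon$ and linearizing in $\epsilon$ yields $\int_{\mathbb{R} \setminus \zeta(E_\delta)} g'(t)\phi(t)\, dt = 0$ for every such $g$. By density this forces $\phi(t) = 0$ for almost every $t \notin \zeta(E_\delta)$.

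Next I would take $\delta \to 0$. The set $N := \{t : \phi(t) \neq 0\}$ is then contained, up to Lebesgue-null sets, in $\zeta(E_\delta)$ for every $\delta > 0$. Using the Sobolev embedding $W^{1,1}(\mathbb{R}^2) \hookrightarrow L^2(\mathbb{R}^2)$ one gets $|E_\delta \cap K| \leq C\,\mathrm{cap}_1(E_\delta \cap K)^2 \to 0$ on every compact $K \subset \Omega$. Provided one can transfer this to $|\zeta(E_\delta \cap K)| \to 0$, one concludes $|N| = 0$, and the coarea formula applied once more gives
\[
\int_{\Omega \setminus F} X \cdot \nabla \zeta\, dx = \int_{\mathbb{R} \setminus \zeta(E)} \phi(t)\, dt = 0,
\]
which is the claimed identity. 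The corollary is then immediate: when $X \in L^1(\Omega)$ we may take $E = \emptyset$, hence $F = \emptyset$, and the above becomes $\int_\Omega X \cdot \nabla \zeta = 0$ for every $\zeta \in C_c^\infty(\Omega)$, i.e., $\mathrm{div}\, X = 0$ in $\mathcal{D}'(\Omega)$.

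The main obstacle is the step $|E_\delta \cap K| \to 0 \implies |\zeta(E_\delta \cap K)| \to 0$. This is not automatic, since a Lipschitz map $\mathbb{R}^2 \to \mathbb{R}$ can send a planar null set to a set of positive one-dimensional Lebesgue measure. The remedy I would pursue is to refine the $\mathrm{cap}_1$-estimate into a bound on a Hausdorff measure $\mathcal{H}^s$ of dimension $s$ slightly greater than one, exploit that Lipschitz images contract all $\mathcal{H}^s$, and extract a suitable subsequence along which the $E_\delta$ can be organized into a nested or summable-capacity family. An alternative, which avoids this subtlety altogether, is to rebuild the family inside the argument, replacing $E_\delta$ by a neighborhood $\zeta^{-1}(N_\eta)$ of $F$ (where $N_\eta$ shrinks to $\zeta(E)$) via a reparametrized test function $\Phi_\eta(\zeta)$ chosen so that $\Phi_\eta$ is locally constant on $N_\eta$; one then passes to the limit $\eta \to 0$ by dominated convergence using $X \in L^1(\Omega \setminus E)$.
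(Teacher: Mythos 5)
The paper itself offers no proof of this statement: it is imported verbatim as Proposition~13.1 of \cite{SandierSerfaty} and used as a black box, so your attempt has to stand on its own. Its architecture is sound, and the reparametrization trick is genuinely nice: replacing $\zeta$ by $\Psi_\epsilon\circ\zeta$ with $\Psi_\epsilon(t)=t+\epsilon g(t)$ leaves $F_\delta=\zeta^{-1}(\zeta(E_\delta))$ unchanged, the resulting identity $0=\int_{\Omega\setminus F_\delta}X\cdot\nabla\zeta+\epsilon\int_{\Omega\setminus F_\delta}g'(\zeta)\,X\cdot\nabla\zeta$ is \emph{exactly} linear in $\epsilon$ (there is no remainder to control), and testing against all $g'$ does force $\phi(t)=0$ for a.e.\ $t\notin\zeta(E_\delta)$. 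Two minor points should be tidied up: the coarea step must be carried out on $\{\nabla\zeta\neq0\}$, which is harmless because $X\cdot\nabla\zeta$ vanishes on the critical set, and one should work with $E_\delta\cap\supp\zeta$, since $\zeta(E_\delta\setminus\supp\zeta)\subset\{0\}$ is Lebesgue-null.

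The genuine gap is exactly where you locate it, but neither of your proposed remedies closes it. You need $|\zeta(E_\delta\cap K)|\to0$ in $\R$, and bounding $\mathcal{H}^s(E_\delta\cap K)$ for some $s>1$ is a dead end: every subset of the real line has zero $\mathcal{H}^s$-measure when $s>1$, so a statement of the form $\mathcal{H}^s(\zeta(E_\delta))\to0$ carries no information about the Lebesgue measure of $\zeta(E_\delta)\subset\R$. What the capacity hypothesis actually gives you is control of the one-dimensional Hausdorff \emph{content}: in $\R^2$ one has $\mathcal{H}^1_\infty(A)\leq C\,\capa_1(A)$ (take an admissible $\varphi$, use the coarea formula $\int|\nabla\varphi|=\int_\R P(\{\varphi>s\})\,ds$ to find a level set of perimeter at most $\int|\nabla\varphi|$ containing $A$, and apply the boxing inequality). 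Hausdorff content is contracted by Lipschitz maps, $\mathcal{H}^1_\infty(\zeta(A))\leq \mathrm{Lip}(\zeta)\,\mathcal{H}^1_\infty(A)$, and on subsets of $\R$ the content $\mathcal{H}^1_\infty$ coincides with Lebesgue outer measure. Chaining these gives $|\zeta(E_\delta\cap K)|\leq C\,\mathrm{Lip}(\zeta)\,\capa_1(E_\delta\cap K)\to0$, which is precisely the missing step; with it $\phi=0$ a.e.\ on $\R$, and your final coarea identity on $\Omega\setminus F$ (legitimate since $E\subset F$ and $X\in L^1(\Omega\setminus E)$) concludes. The ``alternative'' route via $\Phi_\eta(\zeta)$ is too vague to assess and would in any case run into the same need to show that $\zeta(E_\delta)$ has small measure.
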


If $\mu$ is in $H^{-1}(\Omega)$ we have seen in the introduction that $u$ is in $H^1(\Omega)$ and $T_u$ is in $L^1(\Omega)$. Thanks to the previous proposition $u$ weakly stationary harmonic implies $u$ stationary harmonic.

\section{First case: local behavior near a point $z_0$ such that $\omega_{\h}(z_0)=(\partial_x\h-i\partial_y\h)^2(z_0) \neq0$}

Let us recall that we consider a couple $(\mu, h_\mu)$ which satisfies

\begin{equation}\label{C1}
\h \in H^1(\Omega)
\end{equation}

\begin{equation}\label{C2}
\Delta \h = \mu, \ \text{in} \ \Omega
\end{equation}
\noindent where $\mu$ is a Radon measure and
\begin{equation}\label{C3}
\omega_{\h}= (\partial_x \h)^2-(\partial_y \h)^2-2i\partial_x \h\partial_y \h \ \text{is  holomorphic  in} \ \Omega
\end{equation}
In this section we  drop the subscript $\mu$ when there is no possible confusion. We denote by $B_r= B(z_0,r)=\{z \in \C ;|z-z_0| <R\}$ the ball of center $z_0$ and of radius $r$. The starting point of the proof of Theorem \ref{theorem1} is the following:

\begin{lemma}\label{starting point}
Let $h$ which satisfies \eqref{C1}, \eqref{C2}, \eqref{C3}. Let $z_0\in \Omega$ such that $\omega_h(z_0)\neq0$. Then there exist $R>0$, a function $\theta: B_R \rightarrow \{ \pm 1\}$ and a harmonic function $H:B_R \rightarrow \R$ such that 
\begin{equation}\label{ImP}
\partial_xh(z) -i \partial_y h(z)= \theta(z)\left[\partial_xH(z)-i\partial_yH(z)\right], \ \ \forall \ z \in B_R.
\end{equation}
\end{lemma}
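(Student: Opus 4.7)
The plan is to exploit the holomorphy of $\omega_h$ near $z_0$ to factor the algebraic identity $(\partial_x h - i\partial_y h)^2 = \omega_h$ into the form sought in \eqref{ImP}. The strategy has three ingredients: pick a holomorphic square root of $\omega_h$ on a small ball, integrate it to produce the harmonic function $H$, and read off the sign $\theta$ from the two possible square roots.

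First, since by \eqref{C3} the function $\omega_h$ is holomorphic on $\Omega$ and $\omega_h(z_0) \neq 0$, by continuity there is $R>0$ such that $\omega_h$ does not vanish on the simply connected ball $B_R = B(z_0,R)$. On $B_R$ I choose a holomorphic branch of the square root: there exists a holomorphic $g : B_R \to \C \setminus \{0\}$ with $g^2 = \omega_h$.

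Second, since $g$ is holomorphic on the simply connected $B_R$, it admits a holomorphic primitive $G$ with $G' = g$. Writing $G = H + iK$ with $H := \operatorname{Re} G$, the Cauchy--Riemann equations give that $H$ is harmonic and
\[
g = G' = \partial_x H + i\,\partial_x K = \partial_x H - i\,\partial_y H.
\]
Third, I extract the sign. The very definition of $\omega_h$ combined with \eqref{C3} gives, as functions on $B_R$,
\[
(\partial_x h - i\partial_y h)^2 = \omega_h = g^2.
\]
Setting $f := \partial_x h - i\partial_y h$, this reads $(f-g)(f+g) = 0$, so at almost every point of $B_R$ one has $f(z) \in \{g(z), -g(z)\}$. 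Since $g$ is continuous and nowhere zero on $B_R$, defining $\theta(z) := f(z)/g(z)$ produces a measurable function with values in $\{\pm 1\}$ (extended arbitrarily on the null exceptional set), and substitution yields exactly \eqref{ImP}.

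The step that deserves the most care is this last one, because $h \in H^1(\Omega)$ provides $\nabla h$ only as an a.e.-defined object, whereas the statement \eqref{ImP} is phrased for every $z \in B_R$. This is not a real obstacle: the earlier observation that $|\nabla h|^2 = |\omega_h|$ is continuous (indeed smooth and nonvanishing on $B_R$) shows that $h$ is locally Lipschitz and that one can select a pointwise representative of $\partial_x h - i\partial_y h$ that is equal to $\pm g$ at every point of $B_R$; with this representative $\theta = f/g$ is literally $\pm 1$ pointwise, and \eqref{ImP} holds for all $z \in B_R$ as stated.
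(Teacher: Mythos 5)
Your proposal is correct and follows essentially the same route as the paper: take a holomorphic square root $g$ of $\omega_h=(\partial_xh-i\partial_yh)^2$ on a small ball where it does not vanish, integrate $g$ over the simply connected ball and take the real part to obtain the harmonic $H$ with $\partial_xH-i\partial_yH=g$, then define $\theta=(\partial_xh-i\partial_yh)/g\in\{\pm1\}$. Your added remark on choosing the continuous (locally Lipschitz) representative of $\nabla h$ so that \eqref{ImP} holds pointwise is a welcome clarification that the paper leaves implicit.
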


\begin{proof}

It holds that 

\begin{equation}\nonumber
\omega_h=(\partial _x h)^2-(\partial_y h)^2-2i\partial_xh\partial_yh=4(\partial_zh)^2=(\partial_xh-i\partial_yh)^2.
\end{equation}
\noindent Thus $(\partial_xh-i\partial_yh)^2$ is a holomorphic function in $\Omega$. If $z_0$ is such that $\omega_h(z_0) \neq0$ then $f:=(\partial_xh-i\partial_yh)^2$ satisfies that  $f $ is holomorphic in $\Omega$ and f$(z_0)\neq0$. This implies that in a neighborhood $U$ of $z_0$ where $f(z)$ does not vanish there exists a function $g:U \rightarrow \C$ such that  $g^2=f$ in $U$.
We can hence deduce that there exists $\theta:U \rightarrow\{ \pm 1 \}$ such that 
\begin{equation}\label{important}\nonumber
\partial_xh-i\partial_yh=\theta(z)g(z) \ \text{in} \ U.
\end{equation}

\noindent From now on we take $U=B(z_0,R)=:B_R$  for $R$ sufficiently small. We then set $H(z) := \text{Re} \int_{z_0}^z g(s)ds.$ This is well defined since $B_R$ is simply connected. The function $H$ satisfies the following properties:

\begin{itemize}
\item[1)] $H$ vanishes at $z_0$.
\item[2)] $H$ is harmonic in $B_R$ because it is the real part of an holomorphic function.
\item[3)] $2\partial _z H (z)= g(z)$ or equivalently $\partial_x H -i\partial_y H =g$
\end{itemize}
Thus 
\begin{equation}\label{importantbis}
\partial_xh-i\partial_yh=\theta (\partial_xH-i \partial_yH), \ \text{in} \ B_R.
\end{equation}
Besides we have that $\nabla H(z_0) \neq0$ since $|\nabla H (z_0)|^2= |\omega_h(z_0)|^2\neq0$.
\end{proof}
\noindent We set:
\begin{eqnarray}\label{+}
B_R^+ := \{z \in B_R ; \theta(z)=+1 \}  \ \ \
B_R^-:=\{z \in B_R ; \theta(z)=-1 \}. 
\end{eqnarray}

\textbf{Idea of the proof of Theorem \ref{theorem1} :} The strategy of the proof is the following: we first show that the function $\theta$ is in $BV(B_R)$. Hence $B_R^+$ and $B_R^-$ are sets of finite perimeter in $B_R$. It turns out that the support of $\mu_{\lfloor{B_R}}$ is equal to the essential boundary of $B_R^+$ minus the (topological) boundary $\partial B_R$. Then we  use a theorem of structure of sets of finite perimeter in $\R^2$ due to Ambrosio-Caselles-Morel-Masnou in \cite{ACMM} to decompose the essential boundary of $B_R^+$ as a disjoint union of Jordan curves. Because of the relation \eqref{ImP} we are able to show that these Jordan curves are unions of some part of the boundary $\partial B_R$ and of level curves of the harmonic function $H$. Since $\mu$ is a Radon measure we  prove that there can not be an infinite number of level curves of $H$ in the support of $\mu$ near $z_0$ (otherwise $\mu(B_R)=+\infty$). Then we can take a smaller open set $V$ containing $z_0$ such that the support of $\mu_{\lfloor V}$ is the set of zeros of $H$. In $V$ we can use the fact that $\nabla (h-\theta H)=0$ or use the maximum principle to obtain that $h=+|H|$ or $h=-|H|$.

%Here we would like to integrate these two relations but for that we must integrate on connected components of $B_R^+$ and $B_R^-$. Recall that a set $E \subset \Omega$ is a set of finite perimeter in $\Omega$ if its characteristic function $\chi _E$ is in $BV(\Omega)$. We have 
%$$ \chi_{B_R^+} =\frac{1}{2}(1+\theta)$$
%$$  \chi_{B_R^-} =\frac{1}{2}(1+\theta) $$
%hence $B_R^+$ and $B_R^-$ are sets of finite perimeter in $\Omega$. Thus we are somehow interested in the structure of connected components of sets of finite perimeter. We will need several definitions and results from the theory of sets of finite perimeter we recall these notions now and we refer the reader to the books (\cite{EvansGariepy}, or \cite{AmbrosioFuscoPallara} for the proof of these results).

\begin{lemma}\label{lemma1}
Let $h$ which satisfies \eqref{C1}, \eqref{C2}, \eqref{C3}. Let $R>0$ be small enough, $\theta :B_R \rightarrow \{ \pm1\}$ and $H:B_R \rightarrow \R$ such that \eqref{ImP} holds. Then $\theta$ is in $BV(B_R)$. 
\end{lemma}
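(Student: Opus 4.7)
Shrink $R$ so that, by continuity, $|\nabla H|\ge c>0$ on $\overline{B_R}$; this is possible since $\nabla H(z_0)\ne 0$. Let $K$ be a harmonic conjugate of $H$ in the simply connected disc $B_R$ and set $\Psi:=H+iK$. Then $\Psi$ is holomorphic with $\Psi'(z_0)\ne 0$, so after a further shrinking of $R$ the map $\Psi$ is a biholomorphism from $B_R$ onto an open convex set $W\subset\C\simeq\R^2_{(s,t)}$ (for $R$ small, $\Psi(B_R)$ is arbitrarily close to the disc $\Psi(z_0)+\Psi'(z_0)B_R$ and is therefore convex). Write $\Phi:=\Psi^{-1}:W\to B_R$ and put $\tilde h:=h\circ\Phi\in H^1(W)$, $\tilde\theta:=\theta\circ\Phi$.

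Decomposing \eqref{ImP} into real and imaginary parts yields $\nabla h=\theta\,\nabla H$ a.e.\ on $B_R$. Conformality of $\Phi$ together with the Cauchy--Riemann equations for $\Psi=H+iK$ give
\begin{equation*}
\partial_s\Phi=\frac{\nabla H}{|\nabla H|^2}\circ\Phi,\qquad \partial_t\Phi=\frac{\nabla^{\perp}H}{|\nabla H|^2}\circ\Phi.
\end{equation*}
Applying the chain rule to $\tilde h$ one finds $\partial_s\tilde h=\nabla h\cdot\partial_s\Phi=\tilde\theta$ and $\partial_t\tilde h=\theta\,\nabla H\cdot(\nabla^{\perp}H/|\nabla H|^2)\circ\Phi=0$ a.e.\ in $W$. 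Since $W$ is convex, the vanishing of $\partial_t\tilde h$ forces $\tilde h(s,t)=f(s)$ for some $f\in H^1$ of one variable. Therefore $\tilde\theta(s,t)=f'(s)=:\eta(s)\in\{\pm 1\}$ depends only on $s$; equivalently, $\theta(z)=\eta(H(z))$ on $B_R$.

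It remains to prove $\eta\in BV_{\mathrm{loc}}$. Conformal invariance of the Laplacian gives $\Delta\tilde h=\Psi_\ast\mu$ as a Radon measure on $W$: for any $\zeta\in C_c^\infty(W)$, integration by parts and the change of variable $z=\Phi(w)$ yield $\langle\Delta\tilde h,\zeta\rangle=\langle\Delta h,\zeta\circ\Psi\rangle=\int\zeta\,d(\Psi_\ast\mu)$. But $\tilde h=f(s)$ implies $\Delta\tilde h=f''(s)$ as a distribution on $W$. Testing against product functions $\zeta(s,t)=\alpha(s)\beta(t)$ with $\beta\in C_c^\infty$, $\int\beta=1$, we obtain
\begin{equation*}
\langle f'',\alpha\rangle=\int_W\alpha(s)\beta(t)\,d(\Psi_\ast\mu)(s,t),
\end{equation*}
bounded by $\|\alpha\|_\infty\|\beta\|_\infty|\Psi_\ast\mu|(\supp\alpha\times\supp\beta)$. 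Hence $f''=\eta'$ is a Radon measure on the $s$-projection $I$ of $W$, so $\eta\in BV_{\mathrm{loc}}(I)$. Since $\theta=\eta\circ H$ with $H\in C^\infty$ and $|\nabla H|$ bounded, this gives $\theta\in BV(B_R)$.

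The main obstacle is the middle paragraph: deducing the pointwise identity $\nabla h=\theta\nabla H$ from \eqref{ImP}, justifying the chain rule for $\tilde h\in H^1$ under the smooth diffeomorphism $\Phi$, and most importantly exploiting the convexity of $W$ to upgrade $\partial_t\tilde h=0$ (weak) into the rigid factorization $\theta=\eta\circ H$. Once this structural reduction is in place, the BV bound is a routine computation: the hypothesis that $\mu$ is a Radon measure transfers via the conformal pullback to the statement that the second derivative of the one-variable function $f$ is a Radon measure, i.e.\ $\eta\in BV$.
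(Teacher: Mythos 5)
Your argument is correct in substance but follows a genuinely different route from the paper. The paper's proof is a three-line computation: since $g=\partial_xH-i\partial_yH$ is holomorphic and nonvanishing on $B_R$, one writes $\theta=(\partial_xh-i\partial_yh)/g$, differentiates by the Leibniz rule, and uses $\partial_{\bar z}g=0$ to get $\partial_x\theta+i\partial_y\theta=\Delta h/g=\mu/g$ in $\mathcal{D}'(B_R)$; the total variation is then bounded by $\|1/g\|_{L^\infty}\mu(B_R)<+\infty$. Your conformal straightening via $\Psi=H+iK$ is heavier machinery, but it buys a stronger structural conclusion: $\theta=\eta\circ H$ for a one-variable $BV$ function $\eta$, i.e.\ $\theta$ is constant on the level curves of $H$ and its jump set is a union of such curves. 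This essentially anticipates what the paper only establishes later (Lemma \ref{colinear} and Claim \ref{cc}), so your approach packages several subsequent steps into one; the price is that it does not transfer to Sections 4--5, where $\nabla H(z_0)=0$ and no such straightening exists near $z_0$, whereas the paper's quotient argument still applies away from $z_0$.

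Two points in your write-up need tightening. First, convexity of $W=\Psi(B_R)$ alone does not guarantee a rectangle $[a,b]\times\supp\beta\subset W$ over an arbitrary compact subinterval $[a,b]$ of the $s$-projection (think of a thin slanted convex strip, whose vertical slices over distinct $s$ can be disjoint); you should either invoke the fact that $W$ is a small perturbation of a disc, or argue locally in $s$ and patch, which yields $\eta\in BV_{\mathrm{loc}}(I)$. Second, $\eta\in BV_{\mathrm{loc}}(I)$ only gives $\theta\in BV_{\mathrm{loc}}(B_R)$; to get the stated global $BV(B_R)$ you should pass to $B_{R/2}$, whose image under $\Psi$ has projection compactly contained in $I$ (this is harmless since the lemma allows $R$ to be taken smaller), and then run the coarea estimate $\int_{B_{R/2}}|\eta_k'(H)|\,|\nabla H|=\int_I|\eta_k'(s)|\,\mathcal{H}^1(H^{-1}(s)\cap B_{R/2})\,ds\leq CR\int_I|\eta_k'|$ together with lower semicontinuity of the total variation to conclude that $\eta\circ H\in BV$.
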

\begin{proof}
We set $g=\partial_x H-i\partial_y H$. Since $H$ is harmonic it holds that $g$ is holomorphic. Since $z_0$ is not a zero of the function $f=(\partial_xh-i\partial_yh)^2$ we have that $g$ does not vanish in $B_R$. Then we can write
$$\theta(z)=\frac{\partial _x h(z)-i\partial_y h(z)}{g(z)}.$$
We obtain that $\theta$ is in $L^1(B_R)$ since $h$ is in $H^1(\Omega)$ and $g$ is in $\mathcal{C}^\infty (\overline{B_R})$.

\noindent Furthermore we can differentiate $\theta$ in the sense of distributions using the Leibniz rule since $g \in \mathcal{C}^\infty(B_R)$. We obtain 

$$\partial_x\theta =\frac{(\partial^2_{xx}h-i\partial^2_{yx}h)g-\partial_xg(\partial_xh-i\partial_y h)}{g^2}, \ \ \ \partial_y\theta =\frac{(\partial^2_{xy}h-i\partial^2_{yy}h)g-\partial_yg(\partial_xh-i\partial_y h)}{g^2}.$$
Summing these two equalities it comes

$$\partial_x \theta +i\partial_y\theta =\frac{\Delta h}{g}-\frac{(\partial_xg+i\partial_yg)(\partial_xh-i\partial_yh)}{g^2}.$$
But since $g$ is holomorphic in $B_R$ it holds that $\partial_{\bar{z}} g=\frac{1}{2}[\partial_xg+i\partial_yg ]=0.$ Hence $\partial_x \theta +i\partial_y\theta =\frac{\Delta h}{g}.$
Now $\Delta h=\mu$ is a Radon measure and we can write $\partial_x\theta =\text{Re}(\frac{1}{g})\Delta h$ , $\partial_y\theta =\text{Im}(\frac{1}{g})\Delta h$. Let us denote by $\langle.,. \rangle$ the duality bracket for distributions. For all $\varphi \in \mathcal{C}^1_c(B_R,\R^2)$ with $|\varphi| \leq 1$, we have 

\begin{eqnarray}
\int_{B_R}\theta \dive \varphi =-\langle \partial_x \theta,\varphi_1 \rangle -\langle \partial_y \theta,\varphi_2 \rangle = -\int_{B_R}\text{Re}(\frac{1}{g})\varphi_1 d\mu- \int_{B_R}\text{Im}(\frac{1}{g})\varphi_2 d\mu. \nonumber
\end{eqnarray}
Hence we obtain

\begin{equation}\nonumber
|\int_{B_R} \theta \dive \varphi | \leq \| \frac{1}{g}  \|_{L^\infty} \mu (B_R) < +\infty
\end{equation}

\noindent which means that $\theta$ is in $BV(B_R)$ by definition.
\end{proof}

Recall that a set $E \subset \Omega$ is a set of finite perimeter in $\Omega$ if its characteristic function $\chi _E$ is in $BV(\Omega)$. We have 
$$ \chi_{B_R^+} =\frac{1}{2}(1+\theta), \ \ \  \chi_{B_R^-} =\frac{1}{2}(1-\theta). $$
Hence $B_R^+$ and $B_R^-$ are sets of finite perimeter in $B_R$. We need several definitions and results from the theory of sets of finite perimeter we recall these notions now and we refer the reader to the books \cite{EvansGariepy}, or \cite{AmbrosioFuscoPallara} for the proof of these results.

\begin{thm}[\cite{EvansGariepy} p.167]
Let $E$ be a set of locally finite perimeter in $\Omega$, then there exists a Radon measure on $\Omega$ denoted by $\|\partial E \|$ and a $\|\partial E \|$-measurable function $\nu_E: \Omega \rightarrow \R$ such that
\begin{itemize}
\item[1)] $|\nu_E(x)|=1$ $\|\partial E \|$- a.e. , and 
\item[2)] $\int_E \dive \varphi dx =\int_\Omega \varphi \cdot \nu_E \ d \|\partial E \|$ for all $\varphi \in \mathcal{C}^1_c(\Omega,\R^n)$.
\end{itemize}
\end{thm}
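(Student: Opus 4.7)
The plan is to derive the statement directly from the Riesz representation theorem together with the polar decomposition of a vector-valued Radon measure, applied to the distributional gradient of $\chi_E$. By hypothesis, $E$ has locally finite perimeter in $\Omega$, meaning $\chi_E\in BV_{\mathrm{loc}}(\Omega)$, so the distributional gradient $D\chi_E$ extends from $\mathcal{C}_c^\infty(\Omega,\R^n)$ to a continuous linear functional on $\mathcal{C}_c^0(\Omega,\R^n)$ with locally finite operator norm. The Riesz representation theorem then yields an $\R^n$-valued Radon measure realizing this functional.

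First I would set $\|\partial E\|:=|D\chi_E|$, the total variation measure, which is a positive Radon measure on $\Omega$ by construction. Next, the Radon--Nikod\'ym theorem, applied componentwise, produces a $\|\partial E\|$-measurable map $\nu_E:\Omega\to\R^n$ with $D\chi_E=-\nu_E\,d\|\partial E\|$; the sign is the conventional one making $\nu_E$ play the role of an outward unit normal to $E$. Assertion (2) is then simply the distributional definition of $D\chi_E$: for $\varphi\in\mathcal{C}_c^1(\Omega,\R^n)$,
\[\int_E\dive\varphi\,dx=\int_\Omega\chi_E\dive\varphi\,dx=-\langle D\chi_E,\varphi\rangle=\int_\Omega\varphi\cdot\nu_E\,d\|\partial E\|.\]

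The remaining assertion $|\nu_E|=1$ $\|\partial E\|$-a.e.\ follows by comparing the characterization $|D\chi_E|(U)=\sup\bigl\{\int\varphi\cdot dD\chi_E:\varphi\in\mathcal{C}_c^0(U,\R^n),\,|\varphi|\le 1\bigr\}$ with the integral formula above and applying Besicovitch differentiation on small balls centered at $\|\partial E\|$-Lebesgue points of $\nu_E$: any deviation of $|\nu_E|$ from $1$ on a set of positive $\|\partial E\|$-measure would contradict the extremal definition of the total variation. This polar-decomposition step is where the substance lies and is the only genuine obstacle in a self-contained write-up, but it is a standard consequence of the Besicovitch covering theorem. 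The rest is formal, which is why the theorem is quoted from Evans--Gariepy rather than reproved here.
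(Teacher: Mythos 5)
Your outline is correct and is precisely the standard argument given in Evans--Gariepy (Riesz representation for the locally bounded functional $\varphi\mapsto\int_E\dive\varphi\,dx$, the total variation measure as $\|\partial E\|$, Radon--Nikod\'ym for the density $\nu_E$, and Besicovitch differentiation to get $|\nu_E|=1$ $\|\partial E\|$-a.e.). The paper itself quotes this theorem without proof, referring the reader to that reference, so there is nothing to compare beyond noting that your sketch reproduces the cited proof faithfully.
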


We present two notions of ``boundary" of sets of finite perimeter:

\begin{definition}
Let $E$ be a set of locally finite perimeter in $\R^n$ and $x\in \R^n$. We say that $x\in \partial^\star E$, \textit{the reduced boundary} of $E$, if 
\begin{itemize}
\item[i)] $\|\partial E \| (B(x,r))>0$ for all $r>0$, 
\item[ii)] $\displaystyle{ \lim_{r\rightarrow 0 } \frac{1}{|B(x,r)|}\int _{B(x,r)} \nu_E d\|\partial E \|=\nu_E(x)}$, and
\item[iii)] $|\nu_E(x)|=1$.
\end{itemize}
\end{definition}

\begin{definition}
Let $E$ be a Lebesgue measurable set in $\R^n$ and  $x\in \R^n$. We say $x \in \partial_\star E$, \textit{the measure theoretic boundary or essential boundary} of $E$ if
$$\limsup_{r\rightarrow 0} \frac{|B(x,r) \cap E|}{r^n} >0 \ \text{ and }  \ \limsup_{r\rightarrow 0} \frac{|B(x,r) \setminus E|}{r^n} >0. $$
\noindent(Here $|A|$ denotes the n-Lebesgue measure of a set in $\R^n$).
\end{definition}

The structure of the reduced boundary of a set of locally finite perimeter in $\R^n$ is described by the following theorem:

\begin{thm}[\cite{EvansGariepy} p.205]
Assume $E$ has locally finite perimeter in $\R^n$.
\begin{itemize}
\item[i)]Then
$$\partial^\star E =\bigcup_{k=1}^\infty K_k \cup N ,$$
where 
$$\|\partial E\| (N)=0$$
and $K_k$ is a compact subset of a $\mathcal{C}^1$-hypersurface $S_k$ ($k=1,2,...$).
\item[ii)] Furthermore, $\nu_E|{S_k}$ is normal to $S_k$ ($k=1,...$) and
\item[iii)] $\|\partial E\|= \mathcal{H}^{n-1}_{ \lfloor _{\partial^\star E}}$.
\end{itemize}
\end{thm}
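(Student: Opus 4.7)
The plan is to prove De Giorgi's structure theorem via the blow-up method, which is the standard approach in geometric measure theory. First I would establish the key blow-up property: for every $x \in \partial^\star E$, the rescaled sets $E_{x,r} := (E-x)/r$ converge in $L^1_{loc}(\R^n)$ as $r \to 0^+$ to the half-space $H^-(x) := \{y \in \R^n : y \cdot \nu_E(x) \leq 0\}$. By BV-compactness one extracts a subsequence $E_{x,r_k}$ converging in $L^1_{loc}$ to some set $F$ of locally finite perimeter, uses the second defining property of the reduced boundary (the averaged normal converges to $\nu_E(x)$) together with lower semicontinuity of perimeter to show that the generalized normal $\nu_F$ is constant equal to $\nu_E(x)$ on $\partial^\star F$, and then invokes the classification of sets of finite perimeter with constant normal to conclude that $F$ is the half-space orthogonal to $\nu_E(x)$. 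Uniqueness of the limit upgrades subsequential convergence to convergence along all $r \to 0$.

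Second, from the blow-up I would extract quantitative density estimates: there exist absolute constants $c,C>0$ such that for every $x \in \partial^\star E$ and every sufficiently small $r > 0$,
\[
c\, r^{n-1} \leq \|\partial E\|(B(x,r)) \leq C\, r^{n-1},
\]
together with the precise limit $\|\partial E\|(B(x,r))/(\omega_{n-1} r^{n-1}) \to 1$ as $r \to 0$. Combined with a standard differentiation theorem for Radon measures (via Besicovitch covering in $\R^n$) this yields statement iii): the measure $\|\partial E\|$ coincides with $\H^{n-1}\lfloor\partial^\star E$.

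Third, to obtain i) and ii) I would cover $\partial^\star E$ by countably many compact pieces on which the blow-up is uniform and $\nu_E$ is continuous. By a Lusin-type selection applied to the approximate normal, one exhausts $\partial^\star E$, up to a $\|\partial E\|$-null set $N$ controlled by the density lower bound, by a sequence of compact sets $K_k$ on each of which the convergence in the blow-up is uniform in $x$ and $\nu_E|_{K_k}$ is continuous. On each such $K_k$ the blow-up forces $\partial^\star E$ to lie, in small balls around each point, inside an arbitrarily thin slab perpendicular to $\nu_E$; this Lipschitz-type control combined with continuity of $\nu_E$ permits one to apply Whitney's $\mathcal{C}^1$-extension theorem to produce a $\mathcal{C}^1$ hypersurface $S_k$ containing $K_k$ whose unit normal agrees with $\nu_E$ at every point of $K_k$.

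The principal obstacle is this last passage from pointwise blow-up to uniform control on the compact pieces $K_k$: upgrading the covering from a merely Lipschitz to a $\mathcal{C}^1$ one requires showing that $\nu_E$, restricted to a suitably chosen $K_k$, is genuinely continuous in the induced topology, and then verifying with uniform rates that the hypotheses of Whitney's extension theorem are met. Everything else (half-space blow-up, density estimates, measure identification) is relatively routine once the continuity/uniformity step is in place.
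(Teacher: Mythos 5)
This theorem is not proved in the paper at all: it is De Giorgi's structure theorem, quoted from Evans--Gariepy (p.~205) and used as a black box in the analysis of the sets $B_R^{\pm}$. So there is no internal argument to compare against; the relevant benchmark is the proof in the cited reference, and your outline is essentially that proof (blow-up to a half-space at points of the reduced boundary, density estimates, a differentiation theorem for iii), and a Lusin-plus-Whitney argument for i) and ii)).

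One step in your plan is ordered circularly. You cannot extract the upper density estimate $\|\partial E\|(B(x,r))\leq C r^{n-1}$ \emph{from} the blow-up, because it is precisely what the blow-up needs: the rescaled perimeters satisfy $\|\partial E_{x,r}\|(B(0,R))=r^{1-n}\|\partial E\|(B(x,rR))$, and BV-compactness of the family $\{E_{x,r}\}_r$ requires this quantity to be bounded uniformly in $r$ before any subsequential limit can be taken. In Evans--Gariepy the upper and lower density bounds for $\|\partial E\|(B(x,r))$ and for the volumes $|B(x,r)\cap E|$ and $|B(x,r)\setminus E|$ are established first, directly from the defining property of the reduced boundary via the Gauss--Green identity on balls and a differential inequality in $r$; only the sharp limit $\|\partial E\|(B(x,r))/(\omega_{n-1}r^{n-1})\rightarrow 1$ is a consequence of the blow-up. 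With the density lemma moved ahead of the blow-up, the rest of your plan is the standard and correct one: the classification of sets with constant generalized normal gives the half-space limit, Besicovitch differentiation gives iii), and the Lusin-type selection of compact pieces on which $\nu_E$ is continuous and the blow-up is uniform, followed by Whitney's $\mathcal{C}^1$ extension, produces the hypersurfaces $S_k$ of i) and ii). You correctly identify the uniformity needed for the Whitney step as the main technical point of that last stage.
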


We have a relation between the reduced and the essential boundary.

\begin{proposition}
\begin{itemize}
\item[i)] $\partial^\star E \subset \partial_\star E$.
\item[ii)] $\mathcal{H}^{n-1}(\partial_\star E \setminus \partial^\star E)=0$.
\end{itemize}
\end{proposition}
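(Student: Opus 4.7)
The plan is to deduce both parts from density considerations: (i) follows from the half-space blow-up at points of the reduced boundary, and (ii) from a lower density estimate for $\|\partial E\|$ at essential boundary points combined with the structure theorem stated just above.

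For (i), I would invoke De Giorgi's blow-up theorem: at every $x\in\partial^\star E$ the rescalings $(E-x)/r$ converge in $L^1_{\text{loc}}$ to the half-space $H^-_{\nu_E(x)}=\{y:y\cdot\nu_E(x)\leq 0\}$. In density form this reads
$$\lim_{r\to 0}\frac{|B(x,r)\cap E|}{|B(x,r)|}=\lim_{r\to 0}\frac{|B(x,r)\setminus E|}{|B(x,r)|}=\frac{1}{2},$$
so both $\limsup$s in the definition of $\partial_\star E$ are bounded below by $\omega_n/2>0$; hence $x\in\partial_\star E$.

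For (ii), the first step is to show that every essential boundary point carries positive upper $(n-1)$-density of the perimeter measure, namely
$$\Theta^{*,n-1}(\|\partial E\|,x):=\limsup_{r\to 0}\frac{\|\partial E\|(B(x,r))}{\omega_{n-1}r^{n-1}}\geq c>0 \qquad\text{for all } x\in\partial_\star E.$$
Since $r\mapsto|B(x,r)\cap E|/|B(x,r)|$ is continuous, the two $\limsup$ conditions in the definition of $\partial_\star E$ combine to produce a single sequence $r_k\to 0$ along which $|B(x,r_k)\cap E|$ and $|B(x,r_k)\setminus E|$ are simultaneously at least $c_0r_k^n$. The relative isoperimetric inequality applied on $B(x,r_k)$ then gives $\|\partial E\|(B(x,r_k))\geq c\,r_k^{n-1}$, which is the desired density bound.

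The second step is the classical density-to-Hausdorff comparison (a Vitali covering argument): for any Radon measure $\mu$ on $\R^n$ and Borel set $A$, the pointwise bound $\Theta^{*,n-1}(\mu,\cdot)\geq c$ on $A$ forces $\mathcal{H}^{n-1}(A)\leq C\mu(A)$. Applying this with $\mu=\|\partial E\|$, $A=\partial_\star E\setminus\partial^\star E$, and using the structure theorem which identifies $\|\partial E\|$ with $\mathcal{H}^{n-1}_{\lfloor\partial^\star E}$ (so that $\|\partial E\|(A)=0$ because $A\cap\partial^\star E=\emptyset$), closes the argument:
$$\mathcal{H}^{n-1}(\partial_\star E\setminus\partial^\star E)\leq C\,\|\partial E\|(\partial_\star E\setminus\partial^\star E)=0.$$
The main obstacle is the lower density estimate: the $\limsup$-based definition of $\partial_\star E$ only guarantees the two positivity conditions on a priori disjoint subsequences, so one must exploit the continuity of $r\mapsto|B(x,r)\cap E|$ to extract a common subsequence before invoking the relative isoperimetric inequality. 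The remaining ingredients, namely De Giorgi's blow-up, the structure theorem, and the density comparison, are standard and borrowed from Evans--Gariepy.
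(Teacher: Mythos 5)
The paper states this proposition without proof, recalling it as background from Evans--Gariepy, so there is no in-paper argument to compare against. Your proof is correct and is precisely the standard argument from that reference: the blow-up (density $1/2$) at reduced-boundary points for (i), and for (ii) the continuity/intermediate-value trick producing a common sequence of radii, the relative isoperimetric inequality giving the lower bound $\|\partial E\|(B(x,r_k))\geq c\,r_k^{n-1}$, the density-to-$\mathcal{H}^{n-1}$ comparison for Radon measures, and finally the structure theorem's identity $\|\partial E\|=\mathcal{H}^{n-1}_{\lfloor \partial^\star E}$ to conclude that $\|\partial E\|$ gives no mass to $\partial_\star E\setminus\partial^\star E$.
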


We will also use the following theorem

\begin{thm}[Gauss-Green formula \cite{EvansGariepy} p.209] \label{Gauss-Green}
Let $E \subset \R^n$ have locally finite perimeter.
\begin{itemize}
\item[i)] Then $\mathcal{H}^{n-1}(\partial_\star E \cap K) < +\infty$ for each compact set $K \subset \R^n$.

\item[ii)] Furthermore, for $\mathcal{H}^{n-1}$ a.e. $x \in \partial_\star E$, there is a unique measure theoretic unit outer normal $\nu_E(x)$ such that
\begin{equation}\label{Gauss-Green2}
\int_E \dive(\varphi)dx=\int_{\partial_\star E} \varphi \cdot\nu_E d\mathcal{H}^{n-1}
\end{equation}
for all $\varphi \in \mathcal{C}^1_c(\R^n,\R^n)$.
\end{itemize}
\end{thm}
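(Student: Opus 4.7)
The plan is to deduce Theorem \ref{Gauss-Green} as a clean consequence of the three structural results stated immediately above it in the excerpt: (a) for any set $E$ of locally finite perimeter there exists a Radon measure $\|\partial E\|$ on $\Omega$ and a $\|\partial E\|$-measurable unit vector field $\nu_E$ satisfying $\int_E \dive\varphi\,dx = \int_\Omega \varphi\cdot\nu_E\,d\|\partial E\|$; (b) the reduced boundary $\partial^\star E$ decomposes as a countable union of compact subsets of $C^1$-hypersurfaces, with $\|\partial E\| = \mathcal{H}^{n-1}_{\lfloor \partial^\star E}$; (c) $\partial^\star E\subset \partial_\star E$ and $\mathcal{H}^{n-1}(\partial_\star E\setminus \partial^\star E)=0$.

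For part i), fix a compact set $K\subset \R^n$. By (c) one has $\mathcal{H}^{n-1}(\partial_\star E\cap K)=\mathcal{H}^{n-1}(\partial^\star E\cap K)$, and by (b) this equals $\|\partial E\|(K)$, which is finite since $\|\partial E\|$ is a Radon measure and $K$ is compact.

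For part ii), start from the weak divergence identity in (a) and substitute $\|\partial E\| = \mathcal{H}^{n-1}_{\lfloor \partial^\star E}$ from (b) to get
\begin{equation*}
\int_E \dive\varphi\,dx = \int_\Omega \varphi\cdot\nu_E\,d\|\partial E\| = \int_{\partial^\star E}\varphi\cdot\nu_E\,d\mathcal{H}^{n-1}.
\end{equation*}
Since $\mathcal{H}^{n-1}(\partial_\star E \setminus \partial^\star E) = 0$ by (c), the integration domain can be enlarged from $\partial^\star E$ to $\partial_\star E$ without changing the value, producing \eqref{Gauss-Green2}. To upgrade the $\|\partial E\|$-a.e.\ defined $\nu_E$ on $\partial^\star E$ into a genuinely intrinsic \emph{measure theoretic unit outer normal} at $\mathcal{H}^{n-1}$-a.e.\ point of $\partial_\star E$, the natural route is De Giorgi's blow-up theorem: at each $x\in\partial^\star E$ the rescaled sets $(E-x)/r$ converge in $L^1_{loc}$ to the half-space $\{y : y\cdot\nu_E(x)\leq 0\}$, giving a purely geometric characterization of $\nu_E(x)$; uniqueness $\mathcal{H}^{n-1}$-a.e.\ on $\partial_\star E$ then follows from (c).

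The main obstacle is identifying the Radon--Nikodym-type field $\nu_E$ from (a) with the geometric blow-up normal at points of $\partial_\star E$; this is precisely the content of the De Giorgi rectifiability theory that also underpins (b), and I would take it as a black box from \cite{EvansGariepy} or \cite{AmbrosioFuscoPallara}. Modulo this identification, Theorem \ref{Gauss-Green} is essentially a bookkeeping combination of (a), (b), and (c), with no additional analytic content required.
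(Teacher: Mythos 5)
The paper does not prove this statement at all: it is quoted verbatim as background from Evans--Gariepy (p.~209), alongside the structure theorem and the reduced/essential boundary comparison, with the reader referred to \cite{EvansGariepy} and \cite{AmbrosioFuscoPallara} for proofs. Your derivation is correct and is essentially the standard textbook argument: part i) follows from $\mathcal{H}^{n-1}(\partial_\star E\setminus\partial^\star E)=0$ together with $\|\partial E\|=\mathcal{H}^{n-1}_{\lfloor\partial^\star E}$ and the local finiteness of the Radon measure $\|\partial E\|$, and part ii) is the weak divergence identity rewritten over $\partial^\star E$ and then over $\partial_\star E$ up to an $\mathcal{H}^{n-1}$-null set, with the identification of $\nu_E$ as the blow-up normal legitimately delegated to De Giorgi's theorem, which is exactly the dependency structure in the cited source.
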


Since $B_R^+$ is a set of finite perimeter in $B_R$, we denote by $\nu_{B_R^+}$ its measure theoretic (or generalized) outer normal.
\begin{lemma}\label{colinear}
Let $h$ as in Lemma \ref{starting point}. Let $\theta$, $H$ given by Lemma \ref{starting point}. Let $B_R^+=\{z \in B_R; \theta(z)=+1\}$, thanks to Lemma \ref{lemma1}, $B_R^+$ is a set of finite perimeter in $B_R$ and we have:   the generalized normal $\nu_{B_R^+}$ is collinear to $\nabla h$ and $\nabla H$, $\mathcal{H}^{1}_ {\lfloor{\partial_\star B^+_R \setminus \partial B_R}}$ almost everywhere in $B_R$.
\end{lemma}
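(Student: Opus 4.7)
The plan is to exploit the pointwise identity $\nabla h = \theta \nabla H$ on $B_R$, which follows from \eqref{ImP} by separating real and imaginary parts (using that $\theta$ is real-valued), together with the Gauss--Green formula for sets of finite perimeter (Theorem \ref{Gauss-Green}) applied to $B_R^+$. The strategy is to test Gauss--Green against vector fields of the form $\eta \nabla^\perp H$ (which are pointwise orthogonal to $\nabla H$), and to show via the divergence-free structure of $\nabla^\perp h$ and $\nabla^\perp H$ that the resulting left-hand side vanishes, thereby forcing $\nu_{B_R^+} \cdot \nabla^\perp H = 0$ on the essential boundary.

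More concretely, for $\eta \in \mathcal{C}^1_c(B_R)$ set $\varphi := \eta \nabla^\perp H$, where $\nabla^\perp H = (-\partial_y H, \partial_x H)$. Since $H$ is smooth and harmonic, $\dive(\nabla^\perp H) = 0$, hence $\dive(\eta \nabla^\perp H) = \nabla \eta \cdot \nabla^\perp H$, and Theorem \ref{Gauss-Green} yields
$$\int_{B_R^+} \nabla \eta \cdot \nabla^\perp H \, dz = \int_{\partial_\star B_R^+} \eta \, \nabla^\perp H \cdot \nu_{B_R^+} \, d\mathcal{H}^1. \qquad (\star)$$
Because $\eta$ is compactly supported in the open set $B_R$, the right-hand side only sees the part of $\partial_\star B_R^+$ lying inside $B_R$, i.e.\ $\partial_\star B_R^+ \setminus \partial B_R$.

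The main step (and the only non-routine one, though I do not expect a serious obstacle) is to show that the left-hand side of $(\star)$ vanishes. For this I use that both $\nabla^\perp h$ and $\nabla^\perp H$ have zero distributional divergence on $B_R$ (since $\partial_x \partial_y = \partial_y \partial_x$ as distributions); hence for every $\eta \in \mathcal{C}^1_c(B_R)$,
$$\int_{B_R} \nabla^\perp h \cdot \nabla \eta \, dz = 0 \quad \text{and} \quad \int_{B_R} \nabla^\perp H \cdot \nabla \eta \, dz = 0.$$
Substituting $\nabla^\perp h = \theta \nabla^\perp H$ in the first identity and splitting both integrals over $B_R^+ \sqcup B_R^-$ gives
$$\int_{B_R^+} \nabla^\perp H \cdot \nabla \eta - \int_{B_R^-} \nabla^\perp H \cdot \nabla \eta = 0, \qquad \int_{B_R^+} \nabla^\perp H \cdot \nabla \eta + \int_{B_R^-} \nabla^\perp H \cdot \nabla \eta = 0,$$
whose sum is $\int_{B_R^+} \nabla^\perp H \cdot \nabla \eta \, dz = 0$.

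Returning to $(\star)$, one obtains $\int_{\partial_\star B_R^+} \eta \, \nabla^\perp H \cdot \nu_{B_R^+} \, d\mathcal{H}^1 = 0$ for every $\eta \in \mathcal{C}^1_c(B_R)$. Since $\nabla^\perp H$ is continuous on $B_R$, a standard density argument then yields $\nabla^\perp H \cdot \nu_{B_R^+} = 0$ $\mathcal{H}^1$-a.e.\ on $\partial_\star B_R^+ \setminus \partial B_R$. In $\R^2$ this is equivalent to $\nu_{B_R^+}$ being collinear with $\nabla H$; and since $\nabla h = \theta \nabla H$ with $\theta \in \{\pm 1\}$ and $\nabla H \neq 0$ on $B_R$ (by the choice of $R$ so small that $g$ does not vanish there, as in Lemma \ref{starting point}), $\nu_{B_R^+}$ is equally collinear with $\nabla h$, which is the claim.
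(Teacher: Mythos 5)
Your proof is correct and follows essentially the same route as the paper: the key identity is $\nabla h=\theta\nabla H$ combined with the symmetry of the second distributional derivatives of $h$ (equivalently $\dive\nabla^{\perp}h=0$), tested against the field $\eta\,\nabla^{\perp}H$ and converted to a boundary integral by the Gauss--Green formula for sets of finite perimeter. The only (cosmetic) difference is that you add the two bulk identities to isolate $\int_{B_R^+}\dive(\eta\nabla^{\perp}H)=0$ before invoking Gauss--Green once, whereas the paper applies Gauss--Green separately on $B_R^{+}$ and $B_R^{-}$ and uses $\partial_\star B_R^+=\partial_\star B_R^-$ with $\nu_{B_R^+}=-\nu_{B_R^-}$ to reach the same conclusion.
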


\begin{proof}
Let us recall that, because of \eqref{ImP} we have  
\begin{equation}\label{previous2}
\partial_x h -i\partial_y h=\theta (\partial_x H -i\partial_y H).
\end{equation}
  In the sense of distributions we have $\partial_x \partial_y h =\partial_y \partial_x h$. Thus we obtain $\partial_y(\theta \partial_x H)=\partial _x(\theta \partial_yH)$ and
\begin{equation}\label{previous}
\partial_y \theta \partial_x H +\theta \partial_y \partial_x H=\partial_x\theta \partial_y H+\theta \partial_x\partial_yH.
\end{equation}
Now since $H$ is harmonic and hence $\mathcal{C}^\infty(B_R)$ it holds that $\partial^2_{xy}H =\partial^2_{yx}H$. Hence $$\partial_y\theta \partial_x H = \partial_x \theta \partial_y H.$$
Thus for all $\varphi \in \mathcal{C}^\infty_c(B_R,\R)$ we have 
\begin{eqnarray}
\langle \partial_y\theta \partial_x H, \varphi \rangle & = & \langle \partial_x\theta \partial_y H, \varphi  \rangle \nonumber \\
\langle \partial_y\theta, \partial_x H\varphi \rangle & =& \langle \partial _x\theta,\partial_yH\varphi \rangle \nonumber \\
\langle \partial_x\theta,-\partial_y H\varphi \rangle + \langle \partial_y\theta, \partial_x H\varphi \rangle &=& 0 \nonumber \\
\int_ {B_R} \theta \dive \psi &=&0 \nonumber\\
\int_{B_R^+} \dive \psi - \int_{B_R^-}  \dive \psi &=&0. \nonumber
\end{eqnarray}
where in the last equalities we set $\psi:=(-\partial_yH\varphi, \partial_x H\varphi)$. We then use Theorem \ref{Gauss-Green} to obtain

\begin{equation}\nonumber
\int_{\partial_\star B_R^+} \psi \cdot \nu_{B_R^+} d\mathcal{H}^1-\int_{\partial_\star B_R^-} \psi \cdot \nu_{B_R^-} d\mathcal{H}^1=0.
\end{equation}

\noindent But $\partial_\star B_R^+=\partial_\star B_R^-$ and $\nu_{B_R^+}=-\nu_{B_R^-}$ because $B^+_R=B_R \setminus B_R^-$. Hence we obtain

\begin{equation}\nonumber
2\int_{ \partial_\star B_R^+} \psi \cdot \nu_{B_R^+}d\mathcal{H}^1=0.
\end{equation}
Using the fact that $\varphi$ has compact support in $B_R$ and the definition of $\psi$ we find that

\begin{equation}\label{ortho}
\int_{\partial_\star B_R^+ \setminus \partial B_R} (-\partial_yH \nu_{B_R^+}^1+\partial_x H\nu_{B_R^+}^2)\varphi d\mathcal{H}^1=0
\end{equation}
for all $\varphi \in \mathcal{C}_c^1(B_R,\R^2)$, where we denoted $\nu_{B_R^+}=(\nu_{B_R^+}^1, \nu_{B_R^+}^2)$.
We conclude from \eqref{ortho} that 
$$-\partial_yH\nu_{B_R^+}^1+\partial_x H\nu_{B_R^+}^2=0, \ \mathcal{H}^1-\text{a.e. on} \ \partial_\star B_R^+ \setminus \partial B_R.$$ 

\noindent The last equality means that $\nu_{B_R^+}$ is orthogonal to $(-\partial_yH,\partial_xH)$ $\mathcal{H}^1$-a.e on $\partial_\star B_R^+ \setminus \partial B_R$ and hence parallel to $(\partial_xH,\partial_yH)$. We also obtain that $\nu_{B_R^+}$ is collinear to $\nabla h$ because $\nabla h =\theta \nabla H$ in $B_R$.
\end{proof}

We can now describe the support of the measure $\mu_ {\lfloor B_R}$ in terms of the boundary of $B_R^+$.

\begin{lemma}\label{support}
Let $h$ satisfy  the hypothesis \eqref{C1}, \eqref{C2}, \eqref{C3}. Let $\theta$, $H$, $B_R$, $B_R^+$ as in  Lemma \ref{starting point}. Then the support of $\mu_{\lfloor B_R}$ is $\partial _\star B_R ^+ \setminus \partial B_R$ and we have 

$$\mu_{\lfloor B_R}= -2 \nabla H \cdot \nu_{B_R^+} \mathcal{H}^1_{\lfloor \partial_\star B_R ^+ \setminus \partial B_R}.$$ 
\end{lemma}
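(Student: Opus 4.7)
The plan is to combine the pointwise identity $\nabla h=\theta\nabla H$ (read off from the real and imaginary parts of \eqref{ImP}) with the Gauss--Green formula (Theorem \ref{Gauss-Green}) applied to $B_R^+$, and then extract the support statement from Lemma \ref{colinear} together with the non-vanishing of the holomorphic function $g$ produced in Lemma \ref{starting point}. First I would test $\mu=\Delta h$ against an arbitrary $\varphi\in\mathcal{C}_c^\infty(B_R)$. Since $\nabla h\in L^\infty_{loc}$ and equals $\theta\nabla H$ pointwise,
\begin{equation*}
\langle \mu,\varphi\rangle \;=\; -\int_{B_R}\nabla h\cdot\nabla\varphi \;=\; -\int_{B_R}\theta\,\nabla H\cdot\nabla\varphi.
\end{equation*}
Because $H$ is smooth and harmonic, $\nabla H\cdot\nabla\varphi=\dive(\varphi\nabla H)$. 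Writing $\theta=2\chi_{B_R^+}-1$ and using $\int_{B_R}\dive(\varphi\nabla H)=0$ (by compact support of $\varphi$), the right-hand side simplifies to $-2\int_{B_R^+}\dive(\varphi\nabla H)$.

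Next I would apply Theorem \ref{Gauss-Green} to $B_R^+$, which has finite perimeter by Lemma \ref{lemma1}, noting that the portion of $\partial_\star B_R^+$ lying on $\partial B_R$ is outside $\supp\varphi$ and hence drops out. This yields
\begin{equation*}
\langle \mu,\varphi\rangle \;=\; -2\int_{\partial_\star B_R^+\setminus\partial B_R}\varphi\,\nabla H\cdot \nu_{B_R^+}\,d\mathcal{H}^1,
\end{equation*}
which is exactly the claimed representation
\begin{equation*}
\mu_{\lfloor B_R} \;=\; -2\,\nabla H\cdot \nu_{B_R^+}\,\mathcal{H}^1_{\lfloor \partial_\star B_R^+\setminus\partial B_R}.
\end{equation*}

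For the support statement it remains to verify that the density is nowhere zero on $\partial_\star B_R^+\setminus\partial B_R$. By Lemma \ref{colinear}, $\nu_{B_R^+}$ is collinear to $\nabla H$ $\mathcal{H}^1$-a.e.\ there, so $|\nabla H\cdot\nu_{B_R^+}|=|\nabla H|$; and by the construction of Lemma \ref{starting point}, $|\nabla H|=|g|$ with $g$ holomorphic and non-vanishing on $\overline{B_R}$ (for $R$ small enough), hence bounded below by a positive constant. The density is therefore continuous and strictly positive in absolute value, which identifies $\supp(\mu_{\lfloor B_R})$ with $\partial_\star B_R^+\setminus\partial B_R$. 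The main subtlety to handle carefully is that $\theta$ is only $BV$, so one must not differentiate $\nabla h=\theta\nabla H$ naively as a product of a $BV$ function and a smooth one; the argument above stays at the level of the distributional identity $\langle\mu,\varphi\rangle=-\int\nabla h\cdot\nabla\varphi$, uses the smoothness of $H$ to convert $\nabla H\cdot\nabla\varphi$ into a divergence \emph{before} invoking anything about $\theta$, and only then decomposes by $\chi_{B_R^+}$.
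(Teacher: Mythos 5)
Your proof is correct and follows essentially the same route as the paper: pair $\mu=\Delta h$ with a test function, use $\nabla h=\theta\nabla H$ and the harmonicity of $H$, apply the Gauss--Green formula for sets of finite perimeter, and conclude via Lemma \ref{colinear} that the density $\nabla H\cdot\nu_{B_R^+}$ has absolute value $|\nabla H|=|g|>0$. The only cosmetic difference is that you write $\theta=2\chi_{B_R^+}-1$ and apply Gauss--Green once to $B_R^+$, whereas the paper splits the integral over $B_R^+$ and $B_R^-$ and uses $\partial_\star B_R^+=\partial_\star B_R^-$, $\nu_{B_R^+}=-\nu_{B_R^-}$; these are equivalent.
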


\begin{proof}
It holds that 

\begin{eqnarray}
\langle \Delta h,\varphi \rangle &=& -\int_{B_R} \nabla h \cdot \nabla \varphi, \ \ \forall \varphi \in \mathcal{C}^\infty_c (B_R) \ \
(h \in H^1(\Omega)) \nonumber \\
&=& -\int_{B_R^+} \nabla H \cdot \nabla \varphi + \int _{B_R^-} \nabla H \cdot \nabla \varphi \ \ (\nabla h=\pm 1 \nabla H \ \text{in} \ B_R^{\pm }) \nonumber
\end{eqnarray}
Now we use the fact that $H$ is harmonic in $B_R$ ($\Delta H=0$ in $B_R$) and the Gauss-Green formula \ref{Gauss-Green} to obtain
\begin{eqnarray}
\langle \Delta h,\varphi \rangle &=& -\int_{\partial_\star B_R^+ \setminus \partial B_R} \varphi \nabla H \cdot \nu_{B_R^+} d \mathcal{H}^1 +\int_{\partial_\star B_R^- \setminus \partial B_R} \varphi \nabla H \cdot \nu_{B_R^-} d \mathcal{H}^1  \nonumber \\
&=& -2\int_{\partial_\star B_R^+ \setminus \partial B_R} \varphi \nabla H \cdot \nu_{B_R^+} d \mathcal{H}^1 \nonumber
\end{eqnarray}
since $\partial_* B_R^+ =\partial_* B_R^-$ and $\nu_{B_R^+}=-\nu_{B_R^-}$. Now because of the previous Lemma \ref{colinear} we have that $|\nabla H \cdot \nu_{B_R^+}|=|\nabla H| \neq 0$ in $B_R$ (recall that $|\nabla H|=|\nabla h|$ in $B_R$). Hence we can deduce that the support of $\mu_{\lfloor B_R}$ is $\partial_* B_R^+ \setminus \partial B_R$ and the lemma is proved.
\end{proof}

We study in more details $\partial_\star B_R^+ \setminus \partial B_R$. In particular since  $\nabla h$ and $\nu_{B_R^+}$ are parallel on $\partial_\star B_R^+ \setminus \partial B_R$ we expect $H$ to be constant on the connected components of this set. In order to prove this fact we need more definitions and more results from geometric measure theory, these can be found in the article \cite{ACMM}. 

\begin{definition}
A curve $\Gamma \subset \R^2$ is a \textit{Jordan curve} if $\Gamma=\gamma([a,b])$ for some $a,b \in \R$ with $a<b$, and some continuous map $\gamma$, one-to-one on $[a,b)$ and such that $\gamma(a)=\gamma(b)$.
\end{definition}

\begin{definition}
A curve $\Gamma \subset \R^2$ is \textit{rectifiable} if $\mathcal{H}^1(\Gamma) < \infty$.
\end{definition}

\begin{lemma}[Lemma 3 in \cite{ACMM}]\label{Lipschitz}
Let $C \subset \R^n$ be a compact connected set with $\mathcal{H}^1(C) < \infty$. Then for any pair of distinct points $x,y \in C$ there exists a Lipschitz one-to-one map $\gamma:[0,1] \rightarrow C$ such that $\gamma(0)=x$ and $\gamma (1) =y$.
\end{lemma}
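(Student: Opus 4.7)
\medskip
\noindent\textbf{Proof proposal.}
The plan is to split the proof into two main steps: (i) establish that $C$ is \emph{arcwise connected} by rectifiable arcs, so that one can at least find a continuous injective map $\alpha:[0,1]\to C$ with $\alpha(0)=x$ and $\alpha(1)=y$; and (ii) reparametrize such an $\alpha$ by arc length to promote it to a Lipschitz parametrization. The geometric input $\mathcal{H}^1(C)<\infty$ is used in both steps: in (i) it prevents $C$ from being too fractal to support an arc, and in (ii) it provides an honest length function bounded by $\mathcal{H}^1(C)$.

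For step (i) I would argue as follows. Fix a small $\varepsilon>0$ and cover $C$ by finitely many closed balls $B(x_i,\varepsilon)$, $i=1,\dots,N(\varepsilon)$, using compactness. Form the finite graph whose vertices are these balls and whose edges join pairs that intersect; since $C$ is connected, this graph is connected. Picking balls containing $x$ and $y$ respectively, extract a chain and pick one point $x=p_0^\varepsilon,p_1^\varepsilon,\dots,p_{m_\varepsilon}^\varepsilon=y$ in $C$ with consecutive distances at most $2\varepsilon$. Interpolate linearly between them (not in $C$, but in $\R^n$) at uniform speed on $[0,1]$ to get $\alpha_\varepsilon:[0,1]\to\R^n$, Lipschitz with constant $\le m_\varepsilon\cdot 2\varepsilon$. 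The crucial point is that $m_\varepsilon\cdot 2\varepsilon$ stays bounded in terms of $\mathcal{H}^1(C)$: the chain visits pieces of $C$ with $\mathcal{H}^1$ mass at least of the order of $\varepsilon$ each, so $m_\varepsilon\varepsilon\le C\,\mathcal{H}^1(C)$. By Arzelà--Ascoli the $\alpha_\varepsilon$ subconverge uniformly to a Lipschitz curve $\alpha:[0,1]\to \overline{C}=C$ joining $x$ to $y$. A further reduction (removing loops by reparametrization) gives an injective continuous path.

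For step (ii), given an injective continuous $\alpha:[0,1]\to C$, define the length function
\[
L(t):=\mathcal{H}^1\bigl(\alpha([0,t])\bigr),\qquad 0\le t\le 1.
\]
Because $\alpha$ is injective, the set $\alpha([s,t])$ is a continuum of diameter $|\alpha(s)-\alpha(t)|>0$ whenever $s<t$, and any connected set has $\mathcal{H}^1$ at least equal to its diameter; hence $L$ is strictly increasing. Continuity of $L$ at $t$ follows from the monotone convergence of the Radon measure $\mathcal{H}^1\lfloor_{\alpha([0,1])}$ applied to the sets $\alpha([0,t_n])$ (together with the fact that a singleton has zero $\mathcal{H}^1$-measure). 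Finally set
\[
\gamma(s):=\alpha\bigl(L^{-1}(sL(1))\bigr),\qquad s\in[0,1].
\]
Then $\gamma$ is injective with $\gamma(0)=x,\gamma(1)=y$, and for $s<s'$
\[
|\gamma(s)-\gamma(s')|\le \mathrm{diam}\bigl(\gamma([s,s'])\bigr)\le \mathcal{H}^1\bigl(\gamma([s,s'])\bigr)=L(1)(s'-s),
\]
which gives the $L(1)$-Lipschitz bound.

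The main obstacle is step (i): finite $\mathcal{H}^1$-mass and connectedness do not obviously yield a continuous injective path, and one really needs to exploit the quantitative bound $m_\varepsilon\varepsilon\lesssim\mathcal{H}^1(C)$ to compactify the $\varepsilon$-chains. The reparametrization in (ii) is routine once one knows that $L$ is finite, monotone and continuous, which are all direct consequences of the assumption $\mathcal{H}^1(C)<\infty$ and the injectivity of $\alpha$.
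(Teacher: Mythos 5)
The paper does not actually prove this lemma: it is imported verbatim as Lemma~3 of \cite{ACMM} (where it is in turn deduced from the classical result on arcwise connectedness of continua of finite $\mathcal{H}^1$-measure, cf.\ Falconer's \emph{Geometry of Fractal Sets}, Lemma~3.12). So there is no in-paper proof to compare against; judged on its own, your reconstruction follows exactly that standard route and is essentially correct. Two points need tightening. First, the bound $m_\varepsilon\varepsilon\lesssim \mathcal{H}^1(C)$ does not follow from an arbitrary finite cover: if the balls overlap heavily, the ``pieces of mass $\gtrsim\varepsilon$'' are counted with unbounded multiplicity. The standard fix is to take the centers to be a maximal $\varepsilon$-separated subset of $C$ and to pass to a \emph{simple} chain in the intersection graph; then the balls $B(p_i^\varepsilon,\varepsilon/2)$ are pairwise disjoint, each satisfies $\mathcal{H}^1(C\cap B(p_i^\varepsilon,\varepsilon/2))\ge\varepsilon/2$ (because $C$ is connected and not contained in the ball, so it meets every sphere $\partial B(p_i^\varepsilon,s)$, $s<\varepsilon/2$), and summing gives $m_\varepsilon\varepsilon/2\le\mathcal{H}^1(C)$. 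Second, ``removing loops by reparametrization'' is not a reparametrization but an appeal to the classical theorem that a continuous path between two points in a Hausdorff (here, metric) space contains an injective continuous path with the same endpoints; this should be cited or proved, as it is the one genuinely topological ingredient. Granting these, step (ii) is correct as written: injectivity plus compactness makes $\alpha$ a homeomorphism onto its image, $L$ is finite, strictly increasing and continuous by the arguments you give, and the chain $|\gamma(s)-\gamma(s')|\le\operatorname{diam}(\gamma([s,s']))\le\mathcal{H}^1(\gamma([s,s']))=L(1)(s'-s)$ yields the Lipschitz bound, using that the diameter of a connected set is at most its $\mathcal{H}^1$-measure.
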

\noindent A consequence of this lemma is that any rectifiable Jordan curve admits a Lipschitz re-parametrization.

In order to state the next theorem, following \cite{ACMM},  we introduce a \textit{formal} Jordan curve $J_\infty$ whose interior is 
$\R ^n$ and a \textit{formal} Jordan curve $J_0$ whose interior is empty. We denote by $\mathcal{S}$ the set of 
Jordan curves and formal Jordan curves. We then have the following description of the essential boundary of sets 
of finite perimeter in $\R^2$.

\begin{thm}[Corollary 1 in \cite{ACMM}]\label{ACMM}
Let $E$ be a subset of $\R²$ of finite perimeter. Then there is a unique decomposition of $\partial_\star E$ into rectifiable Jordan curves $\{C_i^+, C_k^-: i,k \in \mathbb{N} \}\subset \mathcal{S}$, such that 

\begin{itemize}
\item[i)]Given $\inte(C_i^+), \inte(C_k^+), i\neq k$, they are either disjoint or one is contained in the other; given  $\inte(C_i^-), \inte(C_k^-), i\neq k$, they are either disjoint or one is contained in the other. Each $\inte(C_i^-)$ is contained in one of the $\inte(C_k^+)$.
\item[ii)]$P(E)=\sum_i \mathcal{H}^1(C_i^+) +\sum_k \mathcal{H}^1(C_k^-)$.
\item[iii)] If $\inte(C_i^+) \subset \inte(C_j^+)$, $i \neq j$, then there is some rectifiable Jordan curve $C_k^-$ such that $\inte(C_i^+) \subset \inte (C_k^-) \subset \inte(C_j^+)$. Similarly if $\inte(C_i^-) \subset \inte(C_j^-)$, $i \neq j$, then there is some rectifiable Jordan curve $C_k^+$ such that $\inte(C_i^-) \subset \inte (C_k^+) \subset \inte(C_j^-)$.
\item[iv)] Setting $L_j=\{i ; \ \inte(C_i^- \subseteq \inte(C_j^+)\}$, the sets $Y_j=\inte(C_j^+) \setminus \cup_{i\in L_j} \inte(C_i^-)$ are pairwise disjoint, indecomposable and $E=\cup_{j} Y_j$. 
\end{itemize}
\end{thm}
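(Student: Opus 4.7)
The strategy, following the approach of Ambrosio--Caselles--Masnou--Morel, is to reduce the question to a structure theorem for \emph{indecomposable} sets of finite perimeter in the plane, and then to combine the rectifiability of the reduced boundary with the Jordan curve theorem. Recall that a set $Y$ of finite perimeter is called indecomposable if it cannot be written as a disjoint union $A \cup B$ with $P(Y)=P(A)+P(B)$; it is the $BV$-analogue of a connected component.

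The first step is to apply (or establish) the \emph{decomposition theorem}: any set $E \subset \R^2$ of finite perimeter is a countable, essentially disjoint union $E = \bigcup_j Y_j$ of indecomposable components, with $P(E) = \sum_j P(Y_j)$, and this decomposition is unique up to Lebesgue-null sets. This reduces the statement to the case of a single indecomposable $Y$, and will immediately yield property (iv) once the boundary structure of each $Y_j$ is identified. For indecomposable $Y$, the target is to show that $\partial_\star Y$ decomposes, up to an $\mathcal{H}^1$-null set, as one outer rectifiable Jordan curve $C^+$ with $Y \subset \inte(C^+)$, together with at most countably many pairwise disjoint inner Jordan curves $C_k^- \subset \inte(C^+)$ corresponding to the essential ``holes'' of $Y$, so that $Y = \inte(C^+) \setminus \bigcup_k \inte(C_k^-)$ up to Lebesgue-null sets.

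The Jordan curves are built from the structure theorem for reduced boundaries: $\partial^\star Y$ is $\mathcal{H}^1$-rectifiable with $\mathcal{H}^1(\partial^\star Y)=P(Y) <\infty$. One traces rectifiable arcs by following the direction orthogonal to the measure-theoretic normal $\nu_Y$ and uses Lemma \ref{Lipschitz} to extract Lipschitz parametrizations of the connected components of $\partial_\star Y$. Indecomposability of $Y$ forces these arcs to close into Jordan loops: were a rectifiable arc to terminate without closing, the Jordan curve theorem, applied to a suitable modification of the arc, would produce a nontrivial partition of $Y$ into two finite-perimeter subsets with additive perimeters, contradicting indecomposability. Collecting the outer and inner Jordan curves that arise from each $Y_j$ yields the global family $\{C_i^+, C_k^-\}$. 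Property (ii) then follows from $P(E) = \mathcal{H}^1(\partial^\star E)$ together with the additivity of $\mathcal{H}^1$ on essentially disjoint Jordan curves, and properties (i), (iii) follow from planar topology: two Jordan subcurves of $\partial_\star E$ arising from distinct indecomposable components cannot cross, so their interiors are either disjoint or nested, and the alternation of $+/-$ labels along a nested chain records the fact that each crossing of an essential boundary swaps ``inside'' with ``outside''.

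The main obstacle is the Jordan-closure step in the second paragraph: ruling out ``dangling'' rectifiable arcs in $\partial_\star Y$ and actually assembling the boundary into closed loops that separate the plane in the correct way. This is the heart of \cite{ACMM} and requires a delicate interplay between the coarea formula (applied, e.g., to the distance to a reference point of $\partial_\star Y$), the $C^1$-rectifiable structure of $\partial^\star Y$, and the two-dimensional Jordan curve theorem. The guiding intuition is that indecomposability is a measure-theoretic analogue of topological connectedness, and that in $\R^2$ a compact $\mathcal{H}^1$-rectifiable set which bounds a region of positive measure must be a union of Jordan curves.
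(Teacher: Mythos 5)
There is no in-paper proof to compare against here: the statement is quoted verbatim as Corollary~1 of \cite{ACMM} and used as a black box, with the reader explicitly referred to that paper (and to \cite{EvansGariepy}, \cite{AmbrosioFuscoPallara}) for proofs. So the only question is whether your text stands on its own as a proof, and it does not. What you have written is an accurate high-level roadmap of the Ambrosio--Caselles--Masnou--Morel argument, but both of its pillars are invoked rather than established: the decomposition theorem into indecomposable components (itself a nontrivial compactness/maximality argument, Theorem~1 of \cite{ACMM}) is introduced with ``apply (or establish)'', and the central fact --- that the essential boundary of the relevant indecomposable pieces closes up into rectifiable Jordan curves --- is explicitly labelled ``the main obstacle'' and left unproven. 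A proof plan that names its own missing step is not a proof of that step.

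Two further points where the sketch would need repair even as an outline. First, the ``tracing arcs orthogonal to $\nu_Y$ and closing them by contradiction with indecomposability'' picture is not how the closure is actually achieved: \cite{ACMM} passes through the notion of \emph{saturation} and of \emph{simple} sets (indecomposable sets with indecomposable complement), proves that the essential boundary of a simple set is a single rectifiable Jordan curve, and then recovers the inner curves $C_k^-$ as boundaries of the holes of each component $Y_j$; your single-component statement ``one outer curve plus countably many inner curves'' silently presupposes this machinery. Second, the theorem asserts \emph{uniqueness} of the decomposition of $\partial_\star E$ into the family $\{C_i^+,C_k^-\}$, and your proposal never addresses uniqueness at all. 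Given that the paper treats this result as an external citation, the appropriate course is to do the same rather than to gesture at its proof.
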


We are now able to prove:

\begin{lemma}\label{perimetrefini}
Let $\theta$ be such that \eqref{ImP} holds, $\theta \in BV(B_R)$. Let $B_R^+$ as before. There exist (possibly infinitely many) disjoint rectifiable Jordan curves $\gamma_i$ such that
$$\partial_\star B_R^+ =\bigcup _{i=1}^{+\infty} \gamma_i.$$
\end{lemma}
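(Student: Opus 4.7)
The plan is to deduce the lemma directly from Theorem \ref{ACMM} once we verify that $B_R^+$ is a set of finite perimeter in $\R^2$ (and not merely in the open set $B_R$). Indeed Theorem \ref{ACMM} is stated for subsets of $\R^2$ of finite perimeter, so this is the only preparation needed.

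To carry out this preparation, I would invoke Lemma \ref{lemma1} which gives $\theta\in BV(B_R)$, hence $\chi_{B_R^+}=\tfrac{1}{2}(1+\theta)\in BV(B_R)$, i.e.\ $B_R^+$ has finite perimeter in $B_R$. Extending $\chi_{B_R^+}$ by $0$ outside $B_R$ produces a $BV$ function on $\R^2$: the extra contribution to the distributional gradient supported on $\partial B_R$ is controlled by $\mathcal{H}^1(\partial B_R)=2\pi R<+\infty$ since $B_R$ has smooth boundary. Thus $B_R^+$ is a set of finite perimeter in $\R^2$.

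Applying Theorem \ref{ACMM} to $B_R^+\subset\R^2$ would then yield at most countable families of rectifiable Jordan curves $\{C_i^+\}_i$ and $\{C_k^-\}_k$ satisfying the four properties (i)--(iv) recalled there. In particular, property (ii) gives
\[
P(B_R^+)=\sum_i \mathcal{H}^1(C_i^+)+\sum_k \mathcal{H}^1(C_k^-)=\mathcal{H}^1(\partial_\star B_R^+),
\]
so the decomposition of $\partial_\star B_R^+$ into these Jordan curves is exhaustive and pairwise essentially disjoint (the nesting/disjointness of interiors in (i) forbids any overlap of positive $\mathcal{H}^1$-measure). Re-indexing the combined family $\{C_i^+\}\cup\{C_k^-\}$ as a single sequence $\{\gamma_i\}_i$ then produces the desired family of disjoint rectifiable Jordan curves whose union is $\partial_\star B_R^+$.

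There is really no substantive obstacle here: once $B_R^+$ is recognized as a set of finite perimeter in $\R^2$, the whole structural statement is Theorem \ref{ACMM}. The only small point requiring care is that the essential boundary has to be taken in the $\R^2$-sense, so that pieces of $\partial B_R$ may legitimately appear among the $\gamma_i$; this is the same convention already adopted in Lemma \ref{support}, where the support of $\mu_{\lfloor B_R}$ is identified with $\partial_\star B_R^+\setminus\partial B_R$.
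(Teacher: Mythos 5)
Your proof is correct and follows essentially the same route as the paper: verify that $B_R^+$ has finite perimeter in all of $\R^2$ (the paper does this by hand, extending $\theta$ by $-1$ outside $B_R$ and estimating via the Gauss--Green formula, where you invoke the standard BV extension-by-zero result across the smooth boundary $\partial B_R$), and then apply Theorem \ref{ACMM}. The two justifications of the extension step are interchangeable, so there is nothing substantive to distinguish the arguments.
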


\begin{proof}

We must check that $B_R^+$ is a set of finite perimeter in $\R^2$ (not just in $B_R$) in order to apply Theorem \ref{ACMM}. To this end we set 

$$
\tilde{\theta}:= \left\{ 
\begin{aligned}
\theta  \ \text{if} \ &  x \in B_R, \\
-1   \ \text{if}  \ & x\in \R^² \setminus B_R .
\end{aligned}
\right.$$
We also set $\chi_{B_R^+}=\frac{1}{2}(1+\tilde{\theta})$, this is the characteristic function of $B_R^+$ in $\R^2$. We must prove that $\chi_{B_R^+} \in BV(\R^2)$. First we note that $\chi_{B_R^+} \in L^1(\R^2)$ because it is bounded in $B_R$ and it is null in $\R^2\setminus B_R$. Second for all $\varphi \in \mathcal{C}_c^\infty (\R^2,\R^2)$ we have

$$\int_{\R^2} \chi_{B_R^+} \dive(\varphi) = \frac{1}{2}\int_{\R^2} \tilde{\theta}\dive(\varphi)$$ 
since $\varphi$ has compact support in $\R^2$. Thus
\begin{eqnarray}
\int_{\R^2} \chi_{B_R^+} \dive(\varphi) &=& \frac{1}{2} \left[\int_{B_R} \theta \dive(\varphi) - \int_{\R^2 \setminus B_R} \dive(\varphi)\right] \nonumber \\
 &=& \frac{1}{2} \int_{B_R^+} \dive(\varphi) -\frac{1}{2}\int_{B_R^-} \dive(\varphi) -\frac{1}{2}\int_{\R^2 \setminus B_R} \dive(\varphi). \nonumber
\end{eqnarray}

Now we claim that $B_R^+$, $B_R^-$ and $\R^2 \setminus B_R$ have locally finite perimeter in $\R^2$. This is obvious for $\R^2 \setminus B_R$ because $B_R$ is a smooth open set with finite perimeter in $\R^2$. For $B_R^+$, $B_R^-$ thanks to a  deep criterion (\textit{cf.} Theorem 1 p.222 of  \cite{EvansGariepy}) we must only check that for all $K$ compact subset of $\R^2$ 
$$\mathcal{H}^1(K \cap \partial_\star B_R^+)  < +\infty.$$
But $\mathcal{H}^1(K \cap \partial_\star B_R^+)\leq \mathcal{H}^1(\overline{B_R} \cap \partial_\star B_R^+) <+\infty$ because  $B_R^+$ has finite perimeter in $B_R$ by definition, and the same is true for $B_R^-$. We can thus apply the Gauss-Green formula \ref{Gauss-Green} to obtain

\begin{eqnarray}
\int_{\R^2} \tilde{\theta} \dive(\varphi)= \int_{\partial_\star B_R^+} \varphi \cdot \nu_{B_R^+} d\mathcal{H}^1 - \int_{\partial_\star B_R^-} \varphi \cdot \nu_{B_R^-} d\mathcal{H}^1 -\int_{\partial B_R} \varphi \cdot \nu_{B_R}d\mathcal{H}^1. \nonumber
\end{eqnarray}

\noindent Hence for all $\varphi \in \mathcal{C}_c^\infty(\R^2,\R^2)$ we have 
$$|\int_{\R^2} \tilde{\theta} \dive(\varphi)|\leq 2\mathcal{H}^1(\partial_\star B_R^+)+\mathcal{H}^1(\partial B_R) < +\infty. $$

This proves that $\chi_{B_R^+}$ is in $BV(\R^2)$. We can thus apply the Theorem \ref{ACMM} to obtain the lemma.
\end{proof}

%The criterion we used to prove that $B_R^+$ and $B_R^-$ are sets of locally finite perimeter in $\R^2$ is the following.

%\begin{theorem}[\cite{EvansGariepy} p.22]\label{criterion}
%Let $E \subset \R^n$ be Lebesgue measurable. Then $E$ has locally finite perimeter if, and only if,
%\begin{equation}\nonumber
%\mathcal{H}^{n-1}(K \cap \partial_\star E) < +\infty
%\end{equation}
%for each compact set $K \subset \R^n$.
%\end{theorem}

In order to pursue the proof of the main result we need the following version of the coarea formula:

\begin{thm}[Theorem 2.93 in \cite{AmbrosioFuscoPallara}  p.101]\label{coarea formula}
 Let $f:\R^2 \rightarrow \R$ be a Lipschitz function and let $E$ be a countably $\mathcal{H}^1$-rectifiable subset of $\R^2$. Then the function $t \mapsto \mathcal{H}^0(E \cap f^{-1}(t))$ is Lebesgue measurable in $\R$, $E \cap f^{-1}(t)$ is countably $\mathcal{H}^0$-rectifiable for $dt$-a.e. $t \in \R$ and 
 \begin{equation}
 \int_E C_k d^E f_x d\mathcal{H}^1(x)=\int_\R \mathcal{H}^0(E \cap f^{-1}(t))dt.
 \end{equation}

\noindent where $d^Ef_x$ is the tangential differential of $f$ at $x \in E$, $C_kd^E f_x$ is the $k$-dimensional coarea factor, $\mathcal{H}^0$ is the $0$-dimensional Hausdorff measure (this is the counting measure)  and for the definitions of these notions we refer to \cite{AmbrosioFuscoPallara} Chapter 2.
\end{thm}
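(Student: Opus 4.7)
The plan is to reduce the rectifiable coarea formula to the classical one-dimensional coarea formula by parametrizing $E$ as a countable disjoint union of Lipschitz curves. By the very definition of countable $\mathcal{H}^1$-rectifiability (see e.g.\ Theorem 2.83 in \cite{AmbrosioFuscoPallara}), one may write
$$E = N \cup \bigsqcup_{i=1}^{+\infty} \gamma_i(A_i),$$
where $\mathcal{H}^1(N)=0$, each $A_i$ is a Borel subset of $\R$, and each $\gamma_i : A_i \to \R^2$ is injective and Lipschitz. Countable additivity reduces the statement to proving the identity on every piece $E_i := \gamma_i(A_i)$ separately and then summing over $i$, discarding $N$.

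On each $E_i$, I would first use the area formula for injective Lipschitz maps to pull the left-hand integral back to $A_i$: for any Borel $u \geq 0$ on $E_i$,
$$\int_{E_i} u(x)\, d\mathcal{H}^1(x) = \int_{A_i} u(\gamma_i(s))\, |\gamma_i'(s)|\, ds.$$
Applying this with $u(x) = C_1 d^E f_x$ and invoking Rademacher's theorem together with the chain rule gives
$$C_1 d^E f_{\gamma_i(s)} \cdot |\gamma_i'(s)| = |(f\circ \gamma_i)'(s)| \quad \text{for } \mathcal{L}^1\text{-a.e.\ } s \in A_i,$$
so that the contribution of $E_i$ to the left-hand side becomes $\int_{A_i} |g_i'(s)|\, ds$ with $g_i := f \circ \gamma_i$. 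The classical one-dimensional coarea formula (equivalently the Banach indicatrix theorem) applied to the scalar Lipschitz function $g_i$ on $A_i$ yields
$$\int_{A_i} |g_i'(s)|\, ds = \int_\R \mathcal{H}^0(A_i \cap g_i^{-1}(t))\, dt = \int_\R \mathcal{H}^0(E_i \cap f^{-1}(t))\, dt,$$
the last equality using the injectivity of $\gamma_i$. Summing over $i$ produces the stated identity; the Lebesgue measurability of $t \mapsto \mathcal{H}^0(E \cap f^{-1}(t))$ and the countable $\mathcal{H}^0$-rectifiability of the slice $E \cap f^{-1}(t)$ for $dt$-a.e.\ $t$ follow by monotone convergence from the corresponding one-dimensional facts for each $g_i$.

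The main obstacle is the careful handling of the coarea factor $C_1 d^E f_x$: one must verify, using the approximate-tangent characterization of $d^E f$ on the rectifiable set, that the chain-rule identity above holds $\mathcal{L}^1$-almost everywhere on $A_i$ despite $f$ and the parametrizations $\gamma_i$ being only Rademacher-differentiable, and then keep track of the countably many negligible exceptional sets arising from these a.e.\ statements so that their union remains $\mathcal{H}^1$-negligible after summing over the decomposition. Once this bookkeeping is done, the remainder of the argument is the routine passage from the Euclidean coarea formula to its rectifiable counterpart.
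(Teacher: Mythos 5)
This theorem is not proved in the paper at all: it is imported verbatim as Theorem 2.93 of Ambrosio--Fusco--Pallara, so there is no in-paper argument to compare against. Your outline is the standard textbook proof of the rectifiable coarea formula (decompose $E$ into countably many disjoint injective Lipschitz images plus an $\mathcal{H}^1$-null set, pull back by the area formula, apply the one-dimensional Banach indicatrix theorem to each $f\circ\gamma_i$, and sum), and it is essentially the route taken in the cited reference. Two points deserve slightly more care than you give them. First, when you discard $N$ you must also discard it on the right-hand side: the identity $\mathcal{H}^0(E\cap f^{-1}(t))=\sum_i \mathcal{H}^0(E_i\cap f^{-1}(t))$ for a.e.\ $t$ requires knowing that $\mathcal{H}^0(N\cap f^{-1}(t))=0$ for a.e.\ $t$, which follows from Eilenberg's coarea inequality $\int_{\R}^{*}\mathcal{H}^0(N\cap f^{-1}(t))\,dt\leq \mathrm{Lip}(f)\,\mathcal{H}^1(N)=0$, not from $\mathcal{H}^1(N)=0$ alone. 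Second, the chain-rule identity $C_1 d^E f_{\gamma_i(s)}\,|\gamma_i'(s)|=|(f\circ\gamma_i)'(s)|$ cannot be obtained from Rademacher's theorem for $f$ on $\R^2$, since $\gamma_i(A_i)$ is Lebesgue-null and may lie entirely inside the non-differentiability set of $f$; one genuinely needs the theory of tangential differentiability of Lipschitz functions on rectifiable sets (the analogue of AFP Theorem 2.90), which defines $d^E f$ intrinsically on the approximate tangent line. You correctly flag this as the main obstacle, so the proposal is a sound sketch, with those two items constituting the actual technical content to be filled in.
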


We can apply the previous theorem with the function $f:\R^2 \rightarrow \R$, $x \mapsto |x|$ (we have that $|d^Ef_x| \leq 1$ for this $f$ and all $E$ countably $\mathcal{H}^1$-rectifiable subset of $\R^2$). We then find that 
for all rectifiable Jordan curves $\gamma$ we have, for $R> \rho >0$

\begin{equation}\label{coarea2}
\mathcal{H}^1(\gamma \cap (B_R \setminus B_{R-\rho})) \geq \int_{R-\rho}^R \mathcal{H}^0(\gamma_i \cap C_t)dt
\end{equation}

\noindent where $C_t=\{z \in \R^2 ; |z|=t \}$. We then obtain: 

\begin{lemma}\label{simplecurves1}
Under the same assumptions as in Lemma \ref{perimetrefini}. There exist $0<R'<R$ and (possibly infinitely many) connected rectifiable simple curves $\Gamma_j$ such that 
\begin{equation}
\partial_\star B_{R'}^+ \setminus \partial B_{R'} =\bigcup _{j=1}^{+\infty} \Gamma_j.
\end{equation}
\end{lemma}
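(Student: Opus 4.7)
The plan is to apply the coarea estimate \eqref{coarea2} simultaneously to all Jordan curves $\gamma_i$ produced by Lemma \ref{perimetrefini}, so as to find a radius $R'<R$ whose bounding circle $C_{R'}=\partial B_{R'}$ meets each $\gamma_i$ in only finitely many points. By Theorem \ref{ACMM}\,ii), these curves satisfy $\sum_i \mathcal{H}^1(\gamma_i) = P(B_R^+) < +\infty$. Summing \eqref{coarea2} over $i$ and applying Fubini-Tonelli gives
$$\int_{R-\rho}^R \Bigl( \sum_i \mathcal{H}^0(\gamma_i \cap C_t) \Bigr) dt \;\leq\; \sum_i \mathcal{H}^1(\gamma_i \cap (B_R \setminus B_{R-\rho})) \;\leq\; P(B_R^+) \;<\;+\infty,$$
so the integrand is finite for $dt$-a.e. $t \in (R-\rho, R)$. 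I would fix such a value $R' \in (R-\rho, R)$; for this choice each $\gamma_i$ meets $C_{R'}$ in at most finitely many points, and moreover only finitely many indices $i$ can contribute a non-empty intersection.

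Next, I would observe that $B_{R'}^+ = B_R^+ \cap B_{R'}$, so $B_R^+$ and $B_{R'}^+$ agree inside the open disc $B_{R'}$; since the essential boundary is a local notion, this forces
$$\partial_\star B_{R'}^+ \setminus \partial B_{R'} \;=\; \partial_\star B_R^+ \cap B_{R'} \;=\; \bigcup_i (\gamma_i \cap B_{R'}).$$
For each $i$, the Jordan curve $\gamma_i$ is homeomorphic to a circle, and $\gamma_i \cap B_{R'}$ is obtained by removing from $\gamma_i$ the finite set $\gamma_i \cap C_{R'}$. Hence this intersection is either $\gamma_i$ itself (a rectifiable Jordan curve, which already qualifies as a connected rectifiable simple curve) or a finite disjoint union of relatively open arcs, each homeomorphic to an open interval, connected, and of finite $\mathcal{H}^1$-length. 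Re-indexing the countable family of all connected components arising this way as $\{\Gamma_j\}_j$ yields the desired decomposition.

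The only non-routine point is the joint choice of $R'$ that has to work for the countably many curves $\gamma_i$ at the same time; this is precisely what the Fubini-Tonelli exchange above provides, by converting countably many $dt$-a.e.\ statements into a single one. Everything else reduces to the elementary topology of Jordan curves combined with the local character of the measure-theoretic boundary.
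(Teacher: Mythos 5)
Your proof is correct and follows essentially the same route as the paper: the coarea estimate \eqref{coarea2} is used to select a radius $R'$ at which every $\gamma_i$ meets $C_{R'}$ in finitely many points, and each $\gamma_i\cap B_{R'}$ is then cut into finitely many connected rectifiable simple arcs. Your Fubini--Tonelli summation (using $\sum_i\mathcal{H}^1(\gamma_i)=P(B_R^+)<+\infty$ from Theorem \ref{ACMM}) makes the simultaneous choice of $R'$ for countably many curves explicit, and you also treat the case of a $\gamma_i$ lying entirely inside $B_{R'}$ --- two points the paper's own proof glosses over --- but the underlying argument is the same.
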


\begin{figure}[ht!]
\begin{center}
\scalebox{0.5}{\input{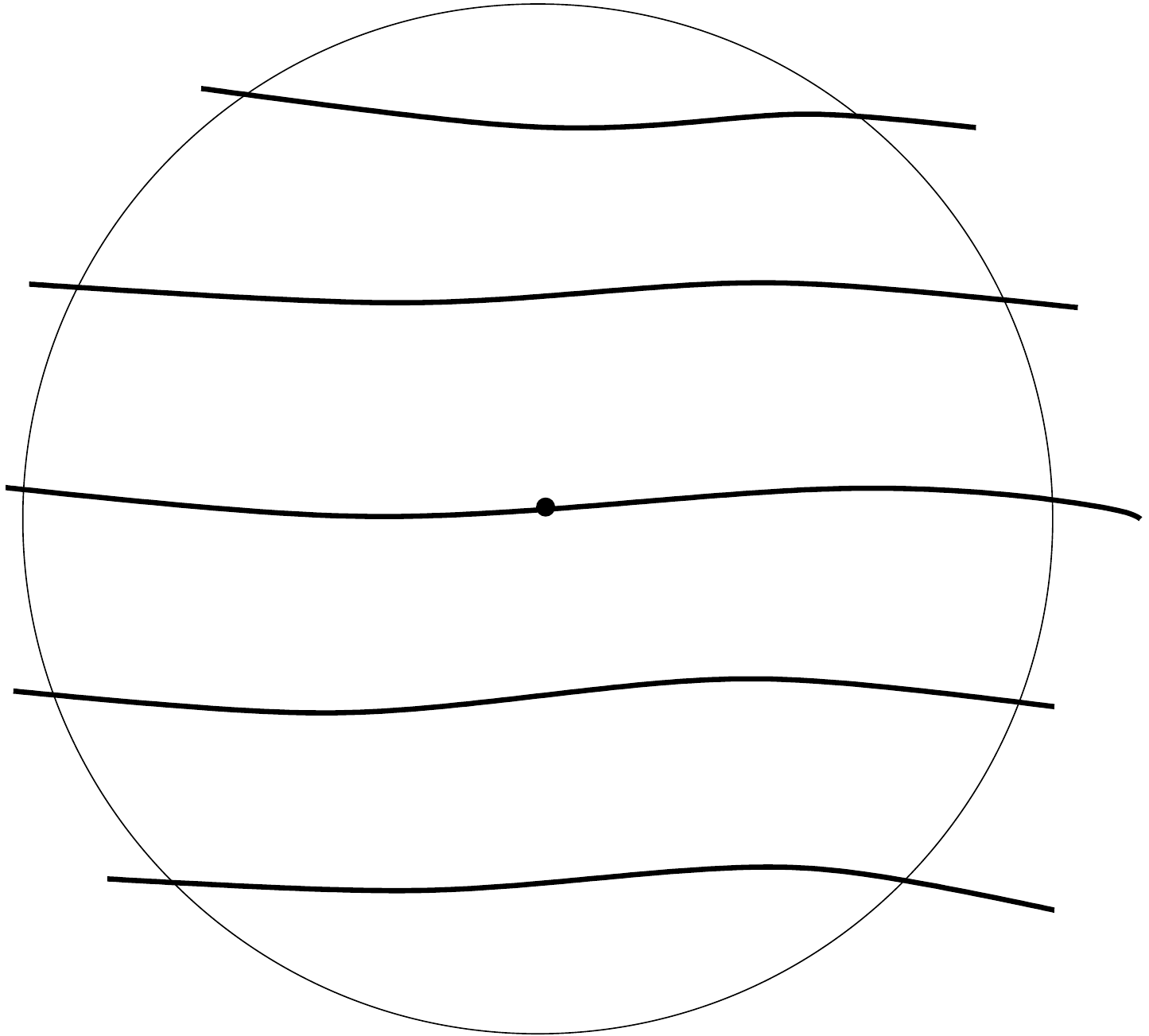_t}}
\end{center}
\caption{Illustration of Lemma \ref{simplecurves1}}
\end{figure}

\begin{proof}
We use the formula \eqref{coarea2}, and the fact that $\mathcal{H}^0(\gamma_i \cap C_t)$ is finite for almost every $t \in [R-\rho,R]$ . We choose $R'$ such that $\mathcal{H}^0(\gamma_i \cap C_{R'}) < +\infty$ and we have
that for all $i \in \mathbb{N}$ there exists $k_i \in \mathbb{N}$ and $k_i$ intervals of $\R$ such that 
$$\gamma_i \cap B_{R'} =\gamma_i(I_1) \cup ... \cup \gamma_i(I_{k_i}) $$
with $I_j=]a_j,b_j[$ and $\gamma_i(a_j), \gamma_i(b_j) \in \partial B_{R'}$ for $j=1,...,k_i$. Hence $\gamma_i \cap  B_{R'}$ is a finite union of connected rectifiable simple curves.
We define $B_{R'}^+ =B_R^+ \cap B_{R'}$ and we find that 

\begin{eqnarray}
\supp \mu_ {\lfloor{B_{R'}}} &=& \partial_\star B_{R'}^+ \setminus \partial B_{R'}
= \supp \mu _ {\lfloor {B_R}} \cap B_{R'} \nonumber \\
&=& \bigcup _{i=1}^{+\infty} \gamma_i \cap B_{R'}  = \bigcup_{j=1}^{+\infty} \Gamma_j \nonumber
\end{eqnarray}
with $\Gamma_j$ connected rectifiable simple curves.
\end{proof}

We are now in position to prove Theorem \ref{theorem1}.

\begin{proof}[Proof of Theorem \ref{theorem1}] 
Let $h$ which satisfies \eqref{C1}, \eqref{C2},\eqref{C3}. Let $\theta$, $H$ be defined by \eqref{ImP}. Let $R'>0$ be as in Lemma \ref{simplecurves1}. From now on we denote by $B$ the ball $B_{R'}$. We also denote by $B^+=\{z\in B ; \theta(z)=+1\}$. Let $\{\Gamma_j \}_{j\in \mathbb{N}}$ simple connected rectifiable given by Lemma \ref{simplecurves1}.
The next claim states that each connected component of $\partial_\star B^+ \setminus \partial B$ is a connected component  of some level curve of the function $H$ in $B$.

\begin{claim}\label{cc}
For all $i \in \mathbb{N}$, there exists $c_i \in \R$ such that 
$$\Gamma_i =\{z \in \R^2 ; H(z)=c_i \} \cap B.$$
\end{claim}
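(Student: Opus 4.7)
The plan is to show that $H$ is constant along each $\Gamma_i$ by exploiting the orthogonality of $\nabla H$ to the tangent direction of $\Gamma_i$, which is a direct consequence of Lemma~\ref{colinear}. Since Lemma~\ref{simplecurves1} tells us $\Gamma_i$ is a connected rectifiable simple curve, Lemma~\ref{Lipschitz} provides, for any two distinct points $x,y\in\Gamma_i$, a Lipschitz one-to-one parametrization $\gamma:[0,1]\to\Gamma_i$ with $\gamma(0)=x$, $\gamma(1)=y$. Because $H$ is $C^\infty$ on $B$ (it is harmonic), the composition $H\circ\gamma$ is Lipschitz, hence absolutely continuous, and
\[
(H\circ\gamma)'(t) \;=\; \nabla H(\gamma(t))\cdot\gamma'(t)\qquad\text{for a.e. } t\in[0,1].
\]

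The main step is then to show $\gamma'(t)\perp\nabla H(\gamma(t))$ for a.e.~$t$. By the Evans--Gariepy structure theorem, $\partial^\star B^+$ is, up to a $\mathcal{H}^1$-null set, contained in a countable union of $C^1$-curves, and at $\mathcal{H}^1$-almost every point $z$ the generalized normal $\nu_{B^+}(z)$ is orthogonal to the (approximate) tangent line of $\Gamma_i$ at $z$. A standard rectifiability argument ensures that for a.e.~$t$ the derivative $\gamma'(t)$ lies in this approximate tangent line at $\gamma(t)$. Combining with Lemma~\ref{colinear}, which asserts $\nu_{B^+}\parallel\nabla H$ $\mathcal{H}^1$-a.e.\ on $\partial_\star B^+\setminus\partial B$, yields $\gamma'(t)\perp\nabla H(\gamma(t))$ for a.e.~$t$. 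Hence $(H\circ\gamma)'\equiv 0$ a.e., so absolute continuity forces $H(x)=H(y)$, and since $x,y$ were arbitrary points of $\Gamma_i$, the function $H$ takes a single value $c_i$ on $\Gamma_i$.

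It remains to upgrade the inclusion $\Gamma_i\subset\{H=c_i\}\cap B$ to equality. Here I would use the hypothesis $\omega_h(z_0)\neq 0$: since $|\nabla H|^2=|\omega_h|$ on $B$, shrinking $R'$ once more ensures $\nabla H\neq 0$ throughout $B$, so that $\{H=c_i\}\cap B$ is a smooth embedded $1$-submanifold of $B$ whose connected components are simple smooth curves joining two points of $\partial B$. The connected set $\Gamma_i$ is contained in a single such component, and the construction in Lemma~\ref{simplecurves1} places the endpoints of $\Gamma_i$ on $\partial B$; thus $\Gamma_i$ exhausts that component, and the claimed equality follows. The main obstacle is the measure-theoretic identification of $\gamma'(t)$ with the approximate tangent of $\partial^\star B^+$ at $\gamma(t)$: once this rectifiability fact is set up, the rest of the argument reduces to the absolute continuity of $H\circ\gamma$ and to elementary properties of level sets of a harmonic function with non-vanishing gradient.
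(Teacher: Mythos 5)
Your argument follows essentially the same route as the paper's: first the Lipschitz parametrization from Lemma \ref{Lipschitz}, the absolute continuity of $H\circ\gamma$, and the collinearity statement of Lemma \ref{colinear} to conclude that $H$ is constant on $\Gamma_i$; then the non-vanishing of $\nabla H$ on $B$ together with the fact that both $\Gamma_i$ and the level set meet $\partial B$ at the same endpoints to upgrade the inclusion to an equality. The only cosmetic differences are that the paper isolates your final ``$\Gamma_i$ exhausts that component'' step as a separate topological statement (Lemma \ref{connectedcurve}) and notes explicitly that for $R'$ small the level set $\{H=c_i\}\cap B$ is connected, so that the equality holds with the whole level set rather than with just one of its components.
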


1) We first show that for all $i \in \mathbb{N}$ there exists $c_i \in \R$ such that $\Gamma_i \subset \{H=c_i\} \cap B $, where $\{H=c_i\}$ is a short for $\{z \in \R^2 ; H(z)=c_i \}$. Indeed let $x,y \in \Gamma_i$, $x\neq y$, thanks to Lemma \ref{Lipschitz} we can find a bijective lipschitz map $f:[0,1] \rightarrow \Gamma_i$ such that $f(0)=x$ and $f(1)=y$. We then have 

$$H(y)-H(x)=\int_0^1 \frac{d}{dt}(H\circ f)(t)dt $$

\noindent because $H\circ f \in W^{1,1}([0,1],\R)$, (that is $H\circ f$ is absolutely continuous). To prove the absolute continuity we use that $H\in \mathcal{C}^\infty(\overline{B})$ and $f\in W^{1,\infty}([0,1],B)$. Hence ( we obtain that $H\circ f \in W^{1,\infty}([0,1]) \subset W^{1,1}([0,1])$  (see \textit{e.g.}  Proposition 9.5 p. 270 of \cite{Brezis}). Thus
$$H(y)-H(x)=\int_0^1 \nabla H(f(t)\cdot f'(t) dt$$
where $f'(t)$ denotes the derivative of $f$ which exists for $\mathcal{L}^1$-almost every $t \in [0,1]$ (because Lipschitz functions are differentiable almost everywhere). But $f'(t)$ is tangent to $\Gamma_i$ and  $\nabla H(f(t))$ is orthogonal to $f'(t)$ for almost every $t\in [0,1]$. Indeed thanks to Lemma \ref{colinear}, we have that $\nabla H $ parallel to $\nu_{B^+}$ $\mathcal{H}^1$-a.e. Hence we obtain that $\nabla H(f(t)) \cdot f'(t)=0$ a.e. and $H(y)=H(x)$. This shows that $\Gamma_i \subset \{H=c_i\} \cap B $.\\

2) We show that $\Gamma_i=\{H=c_i\} \cap B$ using the following Lemma \ref{connectedcurve}.
 We use the fact that since $\nabla H$ does not vanish in $B$ the level curves $\{H=c_i\} \cap B$ are diffeomorphic to straight line (this is a consequence of the implicit function theorem or can be seen in Theorem \ref{courbes harmoniques}) if $R'$ is small enough. Hence they are connected. We then apply Lemma \ref{connectedcurve} to $\Gamma_i$ and $\{H=c_i\} \cap B$. These two curves are rectifiable, connected and simple, and we have 
$\Gamma_i \subset \{H=c_i\} \cap B$ and $\Gamma_i \cap \partial B= \{H=c_i\} \cap \partial B$ by continuity of $H$.

\begin{lemma}\label{connectedcurve}
Let $B$ be a ball of radius $R$. Let $\gamma$ and $\tilde{\gamma}$ be two connected rectifiable simple curves. We also denote by  $\gamma, \tilde{\gamma} :[0,1] \rightarrow \R^2$ some  Lipschitz parametrization of these curves. We suppose that $\gamma$, $\tilde{\gamma}$ are homeomorphism from $[0,1]$ onto their image. Assume that  
\begin{itemize}
\item[i)] $\gamma (]0,1[) \subset B$ and $\gamma(0),\gamma(1) \in \partial B,$
\item[ii)] $\tilde{\gamma} (]0,1[) \subset B$ and $\tilde{\gamma}(0),\tilde{\gamma}(1) \in \partial B,$
\item[iii)] $\tilde{\gamma}([0,1]) \subset \gamma ([0,1]).$ 
\end{itemize}

Then $\gamma = \tilde{\gamma}$.
\end{lemma}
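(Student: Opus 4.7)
The plan is to reduce the problem to studying the continuous injection $\phi := \gamma^{-1} \circ \tilde{\gamma} : [0,1] \to [0,1]$ and showing that this map is surjective; this will give $\tilde{\gamma}([0,1]) = \gamma(\phi([0,1])) = \gamma([0,1])$, which is the content of the lemma (the conclusion $\gamma = \tilde\gamma$ should be read as equality of images, since no compatibility of parametrizations is assumed in the hypotheses).

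First, I would verify that $\phi$ is well-defined and continuous. Since $\tilde{\gamma}([0,1]) \subset \gamma([0,1])$ by hypothesis (iii), the composition $\gamma^{-1} \circ \tilde{\gamma}$ makes sense, and because $\gamma$ is a homeomorphism from $[0,1]$ onto its image and $\tilde{\gamma}$ is continuous, $\phi$ is continuous. Since both $\gamma^{-1}$ and $\tilde{\gamma}$ are injective, $\phi$ is a continuous injection from $[0,1]$ into $[0,1]$. A standard elementary result then gives that $\phi$ is strictly monotone.

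The key step is to locate $\phi(0)$ and $\phi(1)$. By hypothesis (ii), $\tilde{\gamma}(0), \tilde{\gamma}(1) \in \partial B$, hence $\gamma(\phi(0)), \gamma(\phi(1)) \in \partial B$. Because hypothesis (i) ensures $\gamma(]0,1[) \subset B$ with only $\gamma(0), \gamma(1)$ lying on $\partial B$, we must have $\{\phi(0), \phi(1)\} \subset \{0,1\}$. Combined with injectivity of $\phi$, this forces $\{\phi(0), \phi(1)\} = \{0,1\}$.

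Finally, since $\phi$ is a continuous strictly monotone function on $[0,1]$ with $\phi(\{0,1\}) = \{0,1\}$, the intermediate value theorem gives $\phi([0,1]) = [0,1]$. Therefore
\[
\tilde{\gamma}([0,1]) = \gamma(\phi([0,1])) = \gamma([0,1]),
\]
which concludes the proof. I do not expect a real obstacle here; the only subtle point is the passage from "$\phi$ continuous injective on $[0,1]$" to "$\phi$ strictly monotone", which is standard, together with the crucial observation that the endpoint condition on $\partial B$ forces the boundary values of $\phi$ into $\{0,1\}$.
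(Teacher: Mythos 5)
Your proof is correct, but it is organized differently from the paper's. The paper argues by contradiction: it supposes some point $p=\gamma(t_0)$, $t_0\in\,]0,1[$, is missed by $\tilde\gamma$, writes $]0,1[$ as the union of the two disjoint open sets $\tilde{\gamma}^{-1}\bigl(\gamma(]0,t_0[)\bigr)$ and $\tilde{\gamma}^{-1}\bigl(\gamma(]t_0,1[)\bigr)$, and uses connectedness of $]0,1[$ to conclude that $\tilde\gamma$ lands entirely in one ``half'' of $\gamma$, which is then ruled out. You instead work directly with the transition map $\phi=\gamma^{-1}\circ\tilde\gamma$, use the boundary conditions (i)--(ii) to pin $\phi(0),\phi(1)$ down to $\{0,1\}$ (injectivity of $\phi$ then forcing $\{\phi(0),\phi(1)\}=\{0,1\}$), and conclude surjectivity of $\phi$ by the intermediate value theorem. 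Both arguments ultimately rest on the connectedness of the interval, but yours is direct rather than by contradiction, makes the use of the hypothesis that the endpoints lie on $\partial B$ completely explicit (in the paper this enters only implicitly, when passing from $\tilde\gamma([0,1])\subset\gamma([0,1])$ to $\tilde\gamma(]0,1[)\subset\gamma(]0,1[)$), and avoids the somewhat delicate final step of the paper's contradiction. Your reading of the conclusion as equality of images is the intended one, and your remark that strict monotonicity of $\phi$ is standard is fine, though in fact you never need monotonicity: continuity of $\phi$ together with $\{\phi(0),\phi(1)\}=\{0,1\}$ already gives surjectivity. Note also that neither proof uses rectifiability or the Lipschitz character of the parametrizations; the statement is purely topological.
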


We postpone the proof of this lemma at the end of the section. Now that we know the geometry of the curves $\Gamma_i$ we can prove that there exists only a finite number of such curves in a sufficiently small ball.   

\begin{claim}\label{finite}
Let $\rho>0$ small enough such that $\Gamma_i \cap B(z_0,\rho) =\{H=c_i\} \cap B(z_0,\rho)$ is diffeomorphic to an open segment for all $i \in \mathbb{N}$ such that $\Gamma_i \neq \emptyset$. Then there exists a finite number of curves $\Gamma_i$ such that $ \Gamma_i \cap B(z_0,\rho) \neq \emptyset$.
\end{claim}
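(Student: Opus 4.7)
The plan is to combine the formula from Lemma~\ref{support}, which expresses $|\mu|_{\lfloor B}$ as $2|\nabla H|\,\mathcal{H}^1_{\lfloor \cup_i\Gamma_i}$, with the Radon property of $\mu$ to get a finite upper bound on $\sum_i\mathcal{H}^1(\Gamma_i)$, and then show that every $\Gamma_i$ meeting $B(z_0,\rho)$ must contribute a length that is uniformly bounded away from $0$. A pigeonhole argument will then finish the proof.

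Concretely, I would first shrink $R'$ if necessary so that $|\nabla H|\geq c_0>0$ on $\overline{B_{R'}}$; this is possible because $|\nabla H(z_0)|^2=|\omega_{\h}(z_0)|\neq 0$ and $\nabla H$ is continuous. Since $\mu$ is a Radon measure and $\overline{B}\subset\Omega$, we have $|\mu|(B)<+\infty$, and Lemma~\ref{support} then yields
$$\sum_i \mathcal{H}^1(\Gamma_i)\leq \frac{|\mu|(B)}{2c_0}<+\infty. \qquad (\star)$$
The key geometric input is that every $\Gamma_i$ meeting $B(z_0,\rho)$ has length at least $R'-\rho$: by Claim~\ref{cc}, $\Gamma_i=\{H=c_i\}\cap B$ is a connected smooth curve, so choosing $p_i\in\Gamma_i\cap B(z_0,\rho)$ and following $\Gamma_i$ until it meets $\partial B$ at some $q_i$, the arclength from $p_i$ to $q_i$ is at least the Euclidean distance, and the triangle inequality gives $|p_i-q_i|\geq R'-\rho$. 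Combined with $(\star)$, the number of such indices is at most $|\mu|(B)/(2c_0(R'-\rho))$, which is finite.

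The only subtle point I anticipate is justifying that $\Gamma_i$ genuinely reaches $\partial B$ rather than closing up inside $B$: this follows from the maximum principle, since a closed level curve of a nonconstant harmonic function would force an interior extremum inside the enclosed region, contradicting $\nabla H\neq 0$ on $B$. Once this is in place, the rest of the argument is the elementary series comparison outlined above.
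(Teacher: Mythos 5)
Your proof is correct, but it implements the counting differently from the paper. The paper's own argument also starts from the finiteness of $\mathcal{H}^1(\partial_\star B_\rho^+\setminus\partial B_\rho)$, obtained there from $\theta\in BV(B_\rho)$ rather than, as you do, from $|\mu|(B)<+\infty$ together with the lower bound $|\nabla H|\geq c_0$ applied to the formula of Lemma~\ref{support}; the two routes are essentially equivalent, since the $BV$ estimate is itself proved from $\mu(B_R)<+\infty$ and $\|1/g\|_{L^\infty}<+\infty$. The real divergence is in how each curve is made to contribute a definite quantum: you give every $\Gamma_i$ meeting $B(z_0,\rho)$ a minimum length $R'-\rho$ and conclude by comparison with the total length $(\star)$, whereas the paper applies the coarea formula to $x\mapsto|x|$ to deduce that almost every circle $C_t=\{|z-z_0|=t\}$ meets $\bigcup_i\Gamma_i$ in finitely many points, and then invokes the maximum principle to say that each curve inside $B_t$ must cross $C_t$ at least twice. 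Both arguments rest on the same maximum-principle input (a level curve of $H$ cannot close up inside $B$ because $\nabla H\neq 0$ there), and both are sound. Yours is more quantitative, giving the explicit bound $N\leq|\mu|(B)/\bigl(2c_0(R'-\rho)\bigr)$, and dispenses with the coarea formula; the mild price is that you must note the $\Gamma_i$ are pairwise disjoint so that $\sum_i\mathcal{H}^1(\Gamma_i)=\mathcal{H}^1\bigl(\bigcup_i\Gamma_i\bigr)$, which holds here because, by Claim~\ref{cc}, distinct $\Gamma_i$ are full level sets of $H$ for distinct values $c_i$.
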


With $\rho$ as in the statement of the claim we let $B_\rho=B(z_0,\rho)$. Since $\theta \in BV(B_R)$ we also have $\theta \in BV(B_\rho)$.Thus using the same notations as before we have 

\begin{eqnarray}
 +\infty & > & \mathcal{H}^1(\partial_\star B_\rho^+ \setminus \partial B_\rho) \nonumber \\
  & = & \mathcal{H}^1(\supp(\mu_{\lfloor B_\rho})) \nonumber \\
  & =& \mathcal{H}^1 ( \bigcup_{i=1}^{+\infty} \Gamma_i \cap B_\rho) \nonumber \\
  &\geq & \int_0^\rho \mathcal{H}^0 (\bigcup_{i=1}^{+\infty} \Gamma_i \cap C_t) dt \nonumber
\end{eqnarray}
where in the last equality we used the coarea formula (Theorem \ref{coarea formula}), and we let $C_t=\{z\in \C ; |z|=t\}$.
The coarea formula also tells us that for almost every $t \in [0,\rho]$ we have 
$\mathcal{H}^0 (\bigcup_{i=1}^{+\infty} \Gamma_i \cap C_t) < +\infty $. But if $\rho$ is small 
enough then every level curves of the harmonic function $H$ meet the boundary of the ball $B_\rho$. This is a 
consequence of the maximum principle.  
As a consequence we have that $\mathcal{H}^0 (\bigcup_{i=1}^{+\infty} \Gamma_i \cap C_t)$ is 
exactly two times the number of curves $\Gamma_i$ inside $B_t$. Then the number of curves $\Gamma_i$ is finite 
inside $B_\rho$. \\

We can now conclude the proof of Theorem \ref{theorem1}. The last claim proved that there exists a 
finite number of $\Gamma_i$ near $Z_0:=\{z \in B ; H(z)=0 \}$. Thus there exists $\eta >0$ such that $\dist (Z_0,
\Gamma_i)> \eta$ for all $i\in \mathbb{N}$ such that $\Gamma_i$ is not included in $Z_0$. We then set $V:= B(z_0,
\frac{\eta}{2})$. Because of the definition of $\eta$ we obtain that
$$\supp (\mu_{\lfloor V})=\{z \in ; H(z)=0 \}.$$
Note that $Z_0$ is a smooth connected rectifiable curve near $z_0$ (since $\nabla H(z_0) \neq 0$). 
We also set as usual $V^+ =\{z\in V ;\theta(z)=+1\}$, $V^{-}=\{z\in V ;\theta(z)=-1\}$. We have that 
$$\nabla h =+\nabla H, \ \ \text{on} \ V^+, \ \ \ \nabla h =-\nabla H, \ \ \text{on} \ V^- .$$
We thus deduce that 
$h=H$ on $V^+$ and $h=-H$ on $V^-$ because $h=H=0$ on $\partial_\star V^+ \setminus \partial V= Z_0$. We know that $H$ does not vanish in $V^+$ and $V^-$, because $H$ vanishes only on $Z_0$. Hence $H$ has constant sign on $V^+$ and on $V^-$ thanks to the maximum principle. These two signs are opposite, because if they were the same then the minimum (or maximum) of $H$ would be $0$ and would be inside the domain $V$, this contradicts the maximum principle. We can assume for example that $H$ is non negative in $V^+$ and then  $h=|H|$ in $V$.  

\end{proof}

\begin{proof}[Proof of Lemma \ref{connectedcurve}]
By contradiction, assume that there exists $ p \in \gamma \setminus \tilde{\gamma}$. Let $ t_0 \in ]0,1[$ such that $\gamma(t_0)=p$. Then we have 
$$]0,1[=\tilde{\gamma}^{-1} \big( \gamma (]0,t_0[) \cup \gamma (]t_0,1[) \big) $$
since $\tilde{\gamma}(]0,1[) \subset \gamma (]0,1[)$ and since $\gamma(t_0) \notin \tilde{\gamma}$. We then deduce that
$$]0,1[=\tilde{\gamma}^{-1}\left(\gamma (]0,t_0[)\right) \cup \tilde{\gamma}^{-1}\left(\gamma (]t_0,1[)\right).$$
But since $\tilde{\gamma}$ and $\gamma$ are homeomorphism onto their image we have that $\tilde{\gamma}^{-1}(\gamma (]0,t_0[))$ and $\tilde{\gamma}^{-1}(\gamma (]t_0,1[))$ are two disjoint open sets. Thanks to the connectedness of $]0,1[$ we can deduce that 

\begin{itemize}
\item[1)] $\tilde{\gamma}^{-1}(\gamma(]0,t_0[))=]0,1[$ and $\tilde{\gamma}^{-1}(\gamma(]t_0,1[))= \emptyset$ or
\item[2)] $\tilde{\gamma}^{-1}(\gamma(]0,t_0[))= \emptyset$ and $\tilde{\gamma}^{-1}(\gamma(]t_0,1[))= ]0,1[$.
\end{itemize}
These two cases are similar. Let us assume that we are in case 1). We can then obtain that 
$$\gamma (]t_0,1[) \cap \tilde{\gamma}(]0,1[)= \emptyset .$$
This implies that $\gamma (]t_0,1[)=\emptyset$ or $\tilde{\gamma} \nsubseteq \gamma$. The first assertion is impossible because $\gamma$ is assumed to be a homeomorphism from $[0,1]$ onto its image and the second possibility is in contradiction with the hypothesis iii). Thus it holds that $\tilde{\gamma}=\gamma$.
\end{proof}

\section{Second case: local behavior near a zero of even order of $\omega_{\h}(z)=(\partial_x\h-i\partial_y\h)^2(z)$}

This section is devoted to the proof of Theorem \ref{theorem2}. It is very similar to the proof of Theorem \ref{theorem1}. Here $\omega_{\h} (z_0)=0$, but since we assume that $z_0$ is a zero of even order of $\omega_{\h}$ there is no difficulty to find a holomorphic function $g$ such that $(\partial_x\h-i\partial_y\h)^2= g(z)^2.$
Then the proof of Theorem \ref{theorem2} is a rather direct adaptation of the proof of Theorem \ref{theorem1} 
except that here because the function $g$ vanishes at $z_0$ we can only show that the function $\theta$ defined 
as in the previous section is in $BV_{loc}(B_R \setminus\{z_0\})$ for $R$ sufficiently enough. This introduce a 
new technical difficulty. We drop the subscript $\mu$ in the rest of this section. \\

\begin{lemma}\label{starting point2}
Let $h$ which satisfies \eqref{C1}, \eqref{C2}, \eqref{C3}. Let $z_0\in \Omega$ be a zero of even order of $\omega_h(z)=(\partial_xh-i\partial_yh)^2(z)$ . Then there exist $R>0$, a function $\theta: B_R \rightarrow \{ \pm 1\}$ and a harmonic function $H:B_R \rightarrow \R$ such that 
\begin{equation}\label{ImP2}
\partial_xh(z) -i \partial_y h(z)= \theta(z)\left(\partial_xH(z)-i\partial_yH(z)\right), \ \ \forall \ z \in B_R
\end{equation}
\end{lemma}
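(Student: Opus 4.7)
The plan is to essentially repeat the argument of Lemma \ref{starting point}, exploiting the evenness of the order of the zero to ensure that $\omega_h$ admits a holomorphic square root in a neighborhood of $z_0$, even though $\omega_h(z_0)=0$.

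First I would note that, since $\omega_h=(\partial_xh-i\partial_yh)^2$ is holomorphic on $\Omega$ and $z_0$ is a zero of even order $2k$, we can write
\begin{equation*}
\omega_h(z)=(z-z_0)^{2k}\widetilde{f}(z),
\end{equation*}
where $\widetilde{f}$ is holomorphic in a neighborhood of $z_0$ with $\widetilde{f}(z_0)\neq 0$. Choosing $R>0$ small enough so that $\widetilde{f}$ does not vanish on the simply connected set $B_R=B(z_0,R)$, there exists a holomorphic function $\widetilde{g}:B_R\to\mathbb{C}$ with $\widetilde{g}^2=\widetilde{f}$ on $B_R$. Setting
\begin{equation*}
g(z):=(z-z_0)^{k}\widetilde{g}(z),
\end{equation*}
we obtain a holomorphic function on $B_R$ with $g^2=\omega_h$ there.

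Next I would use this square root to define $\theta$. Since $(\partial_xh-i\partial_yh)^2=g(z)^2$ pointwise on $B_R$, at every point $z\in B_R\setminus\{z_0\}$ the complex number $g(z)$ is nonzero, so we may set
\begin{equation*}
\theta(z):=\frac{\partial_xh(z)-i\partial_yh(z)}{g(z)}\in\{\pm 1\},
\end{equation*}
and assign $\theta(z_0)$ arbitrarily (say $+1$); the value at the single point $z_0$ is irrelevant for later purposes. By construction,
\begin{equation*}
\partial_xh(z)-i\partial_yh(z)=\theta(z)\,g(z)\quad\text{for all }z\in B_R.
\end{equation*}
Finally, I would define
\begin{equation*}
H(z):=\mathrm{Re}\int_{z_0}^{z}g(s)\,ds,
\end{equation*}
which is well-defined because $B_R$ is simply connected and $g$ is holomorphic. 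Then $H$ is the real part of a holomorphic function, hence harmonic on $B_R$, and satisfies $2\partial_zH=g$, i.e. $\partial_xH-i\partial_yH=g$. Substituting this into the previous identity yields \eqref{ImP2}.

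The main structural obstacle (compared with Lemma \ref{starting point}) is that here $g(z_0)=0$, so $\nabla H(z_0)=0$ and $\theta$ is not determined pointwise at $z_0$ by the algebraic relation; however, because $z_0$ is a single point this causes no issue in the statement of the lemma. The real technical difficulty, that $1/g$ is no longer bounded on $B_R$, will only surface in the subsequent analysis (when proving $\theta\in BV_{\mathrm{loc}}(B_R\setminus\{z_0\})$), as the author already warns. For the present lemma the argument is a direct transcription of the non-degenerate case with the square root now taken after extracting the factor $(z-z_0)^{2k}$.
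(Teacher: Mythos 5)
Your proposal is correct and follows essentially the same route as the paper: factor out $(z-z_0)^{2k}$, take a holomorphic square root of the non-vanishing remainder to build $g$ with $g^2=\omega_h$, integrate to get the harmonic $H$ with $\partial_xH-i\partial_yH=g$, and read off $\theta$. Your explicit remark that $\theta(z_0)$ can be assigned arbitrarily (since both sides of \eqref{ImP2} vanish at $z_0$) is a small point the paper leaves implicit, but it does not change the argument.
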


\begin{proof}

\noindent Since $z_0$ is a zero of even order of $\omega_h$, we can find a neighborhood $U$ of $z_0$, $n\in \mathbb{N}$ and a holomorphic function $f_1: U \rightarrow \C$ such that $f_1(z_0)\neq 0$ and

\begin{equation}\label{equality1}
(\partial_xh-i\partial_yh)^2=(z-z_0)^{2n}f_1(z).
\end{equation}

\noindent Since $f_1(z_0)\neq 0$, we can choose a smaller neighborhood of $z_0$ still denoted by $U$ such that in $U$ there exists a holomorphic function denoted by $\varphi_1$ which satisfies $\varphi_1^2(z)=f_1(z)$, and furthermore we can choose $U=B(z_0,R)$ for $R$ small enough. We then have 
\begin{equation}\label{defg}
(\partial_xh-i\partial_yh)^2=[(z-z_0)^n\varphi(z)]^2=:g(z)^2.
\end{equation}

\noindent We set $F(z):= \int_{z_0}^z g(s)ds $ and 
\begin{equation}\label{defH}
H(z):=\text{Re} F(z)= \text{Re} \left(\int_{z_0}^z g(s)ds\right).
\end{equation}
\noindent The function $H$ is harmonic in $B_R$ and satisfies $$2\partial_z H =F'(z)=g(z)=(z-z_0)^n\varphi_1(z). $$

\noindent Hence, thanks to \eqref{equality1} we deduce that there exists $\theta: U \rightarrow \{ \pm1\}$ such that 

\begin{equation}\label{deftheta}
\partial_xh-i\partial_yh = \theta(z)(\partial_xH-i\partial_yH)
\end{equation}

\end{proof}
%\begin{eqnarray}
%\partial_xh-i\partial_yh &= &\theta(z)(\partial_xH-i\partial_yH) \nonumber \\ 
%&=& \theta(z)(z-z_0)^n\varphi_1(z) \nonumber \\
%&=& \theta(z)g(z) \nonumber
%\end{eqnarray}

As before we set 

$$B_R^+ := \{z \in B_R ; \theta(z)=+1 \},\ \ 
B_R^-:=\{z \in B_R ; \theta(z)=-1 \} .$$

\noindent We thus obtain that 

$$\nabla h = +\nabla H, \ \text{on} \ B_R^+, \ \ \nabla h= -\nabla H, \ \text{on} \ B_R^{-}.$$

\begin{lemma}\label{lemma2.1}
Let $h$ which satisfies \eqref{C1}, \eqref{C2}, \eqref{C3}. Let $R >0$ be small enough and $\theta:B_R \rightarrow \{\pm 1\}$ such that \eqref{deftheta} holds with $H$ defined by \eqref{defH}. Then the function $\theta$ is in $BV_{loc}(B_R \setminus \{z_0\}) $.
\end{lemma}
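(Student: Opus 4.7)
The strategy is essentially the same as in the proof of Lemma \ref{lemma1}, the only difference being that the holomorphic function $g(z)=(z-z_0)^n\varphi_1(z)$ appearing in \eqref{defg} now vanishes (at $z_0$) instead of being everywhere non-zero. Because $\varphi_1(z_0)\neq 0$, by shrinking $R$ if necessary we may assume $\varphi_1$ does not vanish on $B_R$, so that $g$ vanishes on $B_R$ only at $z_0$. On any compact set $K \subset B_R \setminus\{z_0\}$ we then have $|g|\geq c_K>0$, and consequently $1/g \in \mathcal C^\infty(B_R\setminus\{z_0\})$ with $\|1/g\|_{L^\infty(K)}<+\infty$.

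The key identity \eqref{deftheta} rewrites as
$$\theta(z)=\frac{\partial_x h(z)-i\partial_y h(z)}{g(z)}, \quad z\in B_R\setminus\{z_0\},$$
so that $\theta$ is a locally $L^1$ function on $B_R\setminus\{z_0\}$ (in fact $|\theta|\equiv 1$). I would next repeat verbatim the distributional computation of Lemma \ref{lemma1} on any open set $U$ with $\overline{U}\subset B_R\setminus\{z_0\}$: using the Leibniz rule (licit because $1/g$ is smooth on $U$) together with the holomorphy of $g$ (which gives $\partial_{\bar z} g =\tfrac12(\partial_x g+i\partial_y g)=0$), one obtains
$$\partial_x\theta+i\partial_y\theta=\frac{\Delta h}{g}=\frac{\mu}{g} \quad\text{in }\mathcal D'(U),$$
hence $\partial_x\theta=\operatorname{Re}(1/g)\,\mu$ and $\partial_y\theta=-\operatorname{Im}(1/g)\,\mu$ as distributions on $U$.

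For every $\varphi\in\mathcal C^1_c(U,\R^2)$ with $|\varphi|\leq 1$ this yields
$$\Bigl|\int_U \theta\,\dive\varphi\Bigr|\leq \|1/g\|_{L^\infty(U)}\,\mu(U)<+\infty,$$
since $\mu$ is a Radon measure and $U$ is relatively compact in $B_R\setminus\{z_0\}$. Therefore $\theta\in BV(U)$, and by the arbitrariness of $U\Subset B_R\setminus\{z_0\}$ this shows $\theta\in BV_{\mathrm{loc}}(B_R\setminus\{z_0\})$, as claimed.

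The only point that differs genuinely from Lemma \ref{lemma1} is the loss of the global bound on $1/g$ caused by the zero of $g$ at $z_0$; this is what forces the statement to be local away from $z_0$ and is the reason subsequent arguments in this section will require extra care near $z_0$. Otherwise no new idea is needed.
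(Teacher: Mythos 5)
Your proof is correct and follows exactly the paper's route: the paper's own proof of this lemma is a one-line reduction to Lemma \ref{lemma1} applied on any open subset of $B_R$ on which $g=\partial_xH-i\partial_yH$ does not vanish, which is precisely what you carry out in detail. (Minor slip: since $\theta$ is real-valued and $\partial_x\theta+i\partial_y\theta=\mu/g$, one gets $\partial_y\theta=+\operatorname{Im}(1/g)\,\mu$ rather than $-\operatorname{Im}(1/g)\,\mu$; this has no effect on the $BV$ estimate.)
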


\begin{proof}
In order to prove this result we can apply Lemma \ref{lemma1} of the previous section in any open subset $W\subset B_R$ such that $g=\partial_xH-i\partial_yH$ does not vanish in $W$.  
\end{proof}

\noindent Thus $B_R^+$ and $B_R^-$ are sets of locally finite perimeter in $B_R\setminus\{z_0\}$.

\begin{lemma}\label{collinear2}
Let $h$ as in Lemma \ref{starting point2}. Let $\theta$, $H$ given by Lemma \ref{starting point2}. Let $B_R^+=\{z \in B_R; \theta(z)=+1\}$, thanks to Lemma \ref{lemma2.1} $B_R^+$ is a set of locally finite perimeter in $B_R\setminus (\{z_0\})$. Furthermore the generalized normal $\nu_{B_R^+}$ is collinear to $\nabla h$ and $\nabla H$, $\mathcal{H}^{1}_ {\lfloor{\partial_\star B^+_R \setminus \partial B_R}}$ almost everywhere in $B_R$.
\end{lemma}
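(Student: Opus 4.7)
The plan is to reduce to the setting of Lemma \ref{colinear} by removing the isolated bad point $z_0$ and working on a countable exhaustion. The statement of the previous Lemma \ref{colinear} was proved under the assumption that $g = \partial_x H - i\partial_y H$ does not vanish on the ball, which provided the bound $\theta = (\partial_x h - i\partial_y h)/g$ needed to differentiate $\theta$ in the distributional sense. Here $g$ vanishes at $z_0$ (this is the source of the evenness assumption), but it is nowhere else zero in a sufficiently small $B_R$, so the previous argument applies on every compact subset of $B_R \setminus \{z_0\}$.

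First I would note that $B_R^+$ has locally finite perimeter in $B_R \setminus \{z_0\}$: this follows immediately from Lemma \ref{lemma2.1}, since $\chi_{B_R^+} = \tfrac{1}{2}(1+\theta)$ and $\theta \in BV_{\mathrm{loc}}(B_R \setminus \{z_0\})$. The reduced and essential boundary are therefore well-defined in $B_R \setminus \{z_0\}$, and $\{z_0\}$ itself contributes zero $\mathcal{H}^1$-measure, so establishing collinearity $\mathcal{H}^1$-a.e.\ on $\partial_\star B_R^+ \setminus \partial B_R$ can be done away from $z_0$.

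Next I would fix an arbitrary open set $W$ with $\overline{W} \subset B_R \setminus \{z_0\}$ and run the argument of Lemma \ref{colinear} verbatim on $W$. The three ingredients still hold there: (i) $\nabla h = \theta \nabla H$ on $B_R$ by \eqref{deftheta}, (ii) $\partial_x \partial_y h = \partial_y \partial_x h$ in $\mathcal{D}'(W)$, (iii) $H$ is harmonic on all of $B_R$, hence smooth with $\partial_{xy}^2 H = \partial_{yx}^2 H$. Following the same distributional manipulation yields $\partial_y \theta\, \partial_x H = \partial_x \theta\, \partial_y H$ on $W$, and then testing against the vector field $\psi = (-\partial_y H\, \varphi, \partial_x H\, \varphi)$ for $\varphi \in \mathcal{C}_c^\infty(W)$, the Gauss-Green formula gives
\[
\int_{\partial_\star B_R^+ \cap W}\bigl(-\partial_y H\, \nu_{B_R^+}^1 + \partial_x H\, \nu_{B_R^+}^2\bigr)\varphi\, d\mathcal{H}^1 = 0.
\]
By arbitrariness of $\varphi$ the integrand vanishes $\mathcal{H}^1$-a.e.\ on $\partial_\star B_R^+ \cap W$, so $\nu_{B_R^+}$ is collinear to $\nabla H$ there, and hence to $\nabla h = \theta \nabla H$ as well.

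Finally I would exhaust $B_R \setminus \{z_0\}$ by a sequence of such open sets $W_n$ (e.g.\ $W_n = B_R \setminus \overline{B(z_0, 1/n)}$ intersected with a slightly smaller ball) and take the union to conclude the collinearity $\mathcal{H}^1$-a.e.\ on $\partial_\star B_R^+ \setminus (\partial B_R \cup \{z_0\})$; since $\mathcal{H}^1(\{z_0\}) = 0$, this is the desired conclusion. The only real obstacle compared with the case $\omega_h(z_0) \neq 0$ is that we cannot write $\theta = (\partial_x h - i\partial_y h)/g$ globally, but this is resolved cleanly by the exhaustion argument, and since the technical input (Lemma \ref{lemma2.1}) has already been established, nothing new is required.
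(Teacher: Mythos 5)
Your proposal is correct and follows essentially the same route as the paper, which simply states that the proof is identical to that of Lemma \ref{colinear}; your added care in localizing to sets $W \subset\subset B_R \setminus \{z_0\}$ and exhausting, using that $\mathcal{H}^1(\{z_0\})=0$, is exactly the (unstated) detail needed to justify that assertion.
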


The proof of this lemma is exactly the same as the one of Lemma \ref{colinear}. We can also copy the proof of Lemma \ref{support} to obtain

\begin{lemma}\label{support2}
Let $h$ satisfy the hypothesis \eqref{C1}, \eqref{C2}, \eqref{C3}. Let $\theta$, $B_R$, $B_R^+$ as in the previous lemma \ref{colinear}. Then the support of $\mu_{\lfloor B_R}$ is $\partial _\star B_R ^+ \setminus \partial B_R$ and we have 

$$\mu_{\lfloor B_R}= -2 \nabla H \cdot \nu_{B_R^+} \mathcal{H}^1_{\lfloor \partial_* B_R ^+ \setminus \partial B_R}.$$ 
\end{lemma}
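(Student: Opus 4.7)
The plan is to mirror the argument of Lemma \ref{support}, the sole new technical obstruction being that $B_R^+$ is now only of \emph{locally} finite perimeter in $B_R\setminus\{z_0\}$, since the holomorphic function $g=\partial_xH-i\partial_yH=(z-z_0)^n\varphi_1(z)$ constructed in Lemma \ref{starting point2} vanishes at $z_0$ to order $n\geq 1$. To avoid this difficulty I would perform the Gauss--Green computation on a punctured domain and then let the puncture shrink.

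Concretely, for $\varphi\in\mathcal{C}^\infty_c(B_R)$ I would start from $\langle\mu,\varphi\rangle=-\int_{B_R}\nabla h\cdot\nabla\varphi$ and, using $\nabla h\in L^\infty_{\mathrm{loc}}$, rewrite this as $\lim_{\varepsilon\to 0}\int_{B_R\setminus\overline{B(z_0,\varepsilon)}}\nabla h\cdot\nabla\varphi$. On the annular region the identity $\nabla h=\pm\nabla H$ from \eqref{ImP2} splits the integral into contributions from $B_R^\pm\setminus\overline{B(z_0,\varepsilon)}$, both of which have finite perimeter there. Since $\Delta H=0$, Theorem \ref{Gauss-Green} converts each piece into a boundary integral over $\partial_\star B_R^\pm\setminus(\partial B_R\cup\overline{B(z_0,\varepsilon)})$ plus a correction on $\partial B(z_0,\varepsilon)$. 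The key estimate is that the circular corrections vanish in the limit: because $|\nabla H(z)|=|g(z)|\leq C|z-z_0|^n$ with $n\geq 1$, each such term is bounded in modulus by $C\,\varepsilon^{n+1}\|\varphi\|_\infty$, which tends to $0$. Combining the remaining terms and invoking $\partial_\star B_R^+=\partial_\star B_R^-$ with $\nu_{B_R^+}=-\nu_{B_R^-}$, I obtain
$$\langle\mu,\varphi\rangle \;=\; -2\int_{\partial_\star B_R^+\setminus\partial B_R}\varphi\,\nabla H\cdot\nu_{B_R^+}\,d\mathcal{H}^1,$$
which is precisely the claimed identification of $\mu_{\lfloor B_R}$.

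The support assertion then follows from Lemma \ref{collinear2}: on $\partial_\star B_R^+\setminus\partial B_R$ the normal $\nu_{B_R^+}$ is collinear to $\nabla H$, hence $|\nabla H\cdot\nu_{B_R^+}|=|\nabla H|$, which vanishes only at the single point $z_0$ (an isolated zero of $g$). Since a point has zero $\mathcal{H}^1$-measure, the support of the density is the whole of $\partial_\star B_R^+\setminus\partial B_R$. The only genuine technical hurdle throughout is the control near $z_0$, but the vanishing rate $|\nabla H|=O(|z-z_0|^n)$, $n\geq 1$, renders the circular residual harmless and no new ideas beyond those of Lemma \ref{support} are required.
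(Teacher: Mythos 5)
Your proof is correct and follows the same core route as the paper: the paper's own treatment of Lemma \ref{support2} consists of the single remark that one ``can copy the proof of Lemma \ref{support}'', i.e.\ the splitting $\nabla h=\pm\nabla H$ on $B_R^{\pm}$, the harmonicity of $H$, the Gauss--Green formula, and the identities $\partial_\star B_R^+=\partial_\star B_R^-$, $\nu_{B_R^+}=-\nu_{B_R^-}$, followed by the collinearity Lemma \ref{collinear2} to identify the support. What you add --- excising $\overline{B(z_0,\varepsilon)}$, applying Gauss--Green on the punctured set, and killing the circular correction via $|\nabla H|=|g|=O(|z-z_0|^n)$ so that the boundary term is $O(\varepsilon^{n+1})$ --- is not present in the paper but is a genuinely worthwhile refinement rather than a detour: at this stage of Section 4 the set $B_R^+$ is only known to have \emph{locally} finite perimeter in $B_R\setminus\{z_0\}$ (Lemma \ref{lemma2.1}), the global $BV(B_R)$ statement being established only later, and indeed by an argument that invokes Lemma \ref{support2} itself. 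So a literal copy of the proof of Lemma \ref{support} cannot apply Theorem \ref{Gauss-Green} directly on $B_R^+$, and your excision argument is exactly the patch needed to make the paper's one-line instruction rigorous without circularity. The only (shared) imprecision is that the ``support'' should be understood as the relative closure of $\partial_\star B_R^+\setminus\partial B_R$ in $B_R$, since an essential boundary need not be closed; this is inherited from the statement of Lemma \ref{support} and is not a defect of your argument.
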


We would like to apply Theorem \ref{ACMM} to the set $B_R^+$ and continue the proof as in the previous section but we can not do that because $B_R^+$ have only \textit{locally} finite perimeter in $B_R \setminus \{z_0\}$. In fact we will show that this is just a technical issue and that $B_R^+$ has indeed finite perimeter in $B_R$ but it requires some work. In a first time we work in an annular domain. Let $0<\rho<R$, we set
\begin{eqnarray}
A_{R,\rho} =\{z \in \C ; \rho < |z| < R \} \ \ \
A_{R,\rho}^+ =\{ z \in A_{R,\rho} ; \theta(z)=+1 \}. \nonumber
\end{eqnarray}

We first apply Theorem \ref{ACMM} to the set $A_{R,\rho}^+$.

\begin{lemma}\label{jordancurvesannulus}
Let $\theta$ be such that \eqref{ImP2} holds, $\theta \in BV_{loc}(B_R\setminus\{z_0\})$. Let $A_{R,\rho}^+$ as before. There exist (possibly infinitely many) disjoint rectifiable Jordan curves $\gamma_i^\rho$ such that
$$\partial_\star A_{R,\rho}^+ =\bigcup _{i=1}^{+\infty} \gamma_i^\rho.$$
\end{lemma}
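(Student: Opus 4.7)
The plan is to mimic the proof of Lemma \ref{perimetrefini}, but applied to the annular set $A_{R,\rho}^+$ instead of $B_R^+$, exploiting the fact that on $A_{R,\rho}$ the holomorphic function $g(z)=(z-z_0)^n\varphi_1(z)$ stays bounded away from zero.

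First, I would upgrade the hypothesis $\theta\in BV_{loc}(B_R\setminus\{z_0\})$ to the stronger statement $\theta\in BV(A_{R,\rho})$. Repeating the distributional computation of Lemma \ref{lemma1} in $A_{R,\rho}$, where $g$ is holomorphic and non-vanishing, yields
\begin{equation*}
\partial_x\theta+i\partial_y\theta \;=\; \frac{\mu}{g} \qquad \text{in } \mathcal{D}'(A_{R,\rho}).
\end{equation*}
Since $|g(z)|\geq c\rho^n>0$ on the annulus and $\mu$ is a Radon measure with $|\mu|(\overline{B_R})<\infty$ (after possibly shrinking $R$ so that $\overline{B_R}\subset\Omega$), testing against $\varphi\in\mathcal{C}^1_c(A_{R,\rho},\R^2)$ with $|\varphi|\leq 1$ gives the uniform bound $|D\theta|(A_{R,\rho})\leq \|1/g\|_{L^\infty(A_{R,\rho})}\,|\mu|(A_{R,\rho})<\infty$.

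Next, following the extension argument of Lemma \ref{perimetrefini}, I would show that $A_{R,\rho}^+$ has finite perimeter in all of $\R^2$. Setting $\tilde\theta$ equal to $\theta$ on $A_{R,\rho}$ and to $-1$ on $\R^2\setminus A_{R,\rho}$, so that $\chi_{A_{R,\rho}^+}=\tfrac12(1+\tilde\theta)$ on the annulus and $0$ elsewhere, the quantity $\int_{\R^2}\chi_{A_{R,\rho}^+}\,\dive\varphi$ splits into an interior contribution controlled by $|D\theta|(A_{R,\rho})$ and boundary contributions across the two smooth circles $\partial B_R$ and $\partial B_\rho$, bounded by $\mathcal{H}^1(\partial A_{R,\rho})=2\pi(R+\rho)$. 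Both being finite, $\chi_{A_{R,\rho}^+}\in BV(\R^2)$.

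With $A_{R,\rho}^+$ of finite perimeter in $\R^2$, Theorem \ref{ACMM} directly yields the desired decomposition of $\partial_\star A_{R,\rho}^+$ into countably many disjoint rectifiable Jordan curves $\gamma_i^\rho$. The main technical point is the first step: bootstrapping the purely local information $\theta\in BV_{loc}(B_R\setminus\{z_0\})$ to a uniform $BV$ bound on the full annulus $A_{R,\rho}$. This is possible precisely because $1/g$ remains in $L^\infty$ on the annulus, even though it blows up at the critical point $z_0$ where $g$ vanishes; staying at positive distance $\rho$ from $z_0$ is exactly what saves the argument.
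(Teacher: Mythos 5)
Your proposal is correct and follows essentially the same route as the paper: establish $\theta\in BV(A_{R,\rho})$ (which the paper asserts directly, and which you justify by rerunning the computation of Lemma \ref{lemma1} using that $1/g$ is bounded on the annulus), extend by $-1$ outside the annulus to get finite perimeter in all of $\R^2$ as in Lemma \ref{perimetrefini}, and then invoke Theorem \ref{ACMM}. The only difference is that you spell out the $BV$ bootstrap and the boundary contributions of the two circles, which the paper leaves implicit.
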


\begin{proof}
We have that $\theta \in BV (A_{R,\rho})$. As in the proof of Lemma \ref{perimetrefini} one can show that $A_{R,\rho}^+$ has finite perimeter in $\R^2$. We can then apply Theorem \ref{ACMM} to deduce the result.
\end{proof}

\begin{lemma}\label{simplecurvesannulus}
Under the same assumptions as Lemma \ref{jordancurvesannulus}, there exist $0<\rho<\rho'< R'<R$ and (possibly infinitely many) connected rectifiable simple curves $\Gamma_j^\rho$ such that 
\begin{equation}
\partial_\star(A_{R',\rho'}^+ ) \setminus \partial (A_{R',\rho'}) =\bigcup _{j=1}^\infty \Gamma_j^{\rho'}
\end{equation}
\end{lemma}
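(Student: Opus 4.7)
The plan is to mimic the proof of Lemma \ref{simplecurves1}, but to be careful about the choice of both radii that define the annulus. We already know from Lemma \ref{jordancurvesannulus} that $\partial_\star A_{R,\rho}^+=\bigcup_{i\in \mathbb{N}} \gamma_i^\rho$ where each $\gamma_i^\rho$ is a rectifiable Jordan curve. Since $A_{R',\rho'}\subset A_{R,\rho}$ when $\rho<\rho'<R'<R$, we have $A_{R',\rho'}^+=A_{R,\rho}^+\cap A_{R',\rho'}$, and by the local character of the essential boundary, $\partial_\star A_{R',\rho'}^+\setminus \partial A_{R',\rho'}=(\partial_\star A_{R,\rho}^+)\cap A_{R',\rho'}=\bigcup_i \gamma_i^\rho\cap A_{R',\rho'}$. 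So the question reduces to choosing $\rho'$ and $R'$ for which each $\gamma_i^\rho\cap A_{R',\rho'}$ is a finite disjoint union of connected rectifiable simple arcs.

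To this end, I would apply the coarea formula (Theorem \ref{coarea formula}) with $f(z)=|z|$ to each rectifiable curve $\gamma_i^\rho$. Setting $C_t=\{z\in \C;|z|=t\}$, this yields
\begin{equation*}
+\infty>\mathcal{H}^1(\gamma_i^\rho)\geq \int_\rho^R \mathcal{H}^0(\gamma_i^\rho\cap C_t)\,dt,
\end{equation*}
so that for each $i$ there exists a Lebesgue-null set $N_i\subset (\rho,R)$ outside of which $\mathcal{H}^0(\gamma_i^\rho\cap C_t)<+\infty$. The key observation is that $N:=\bigcup_{i\in\mathbb{N}} N_i$ is still a Lebesgue-null subset of $(\rho,R)$, hence I may pick $\rho'\in (\rho,R)\setminus N$ and $R'\in (\rho',R)\setminus N$ with $\rho<\rho'<R'<R$ simultaneously avoiding every $N_i$.

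With such a choice, for every $i$ the Jordan curve $\gamma_i^\rho$ meets $\partial A_{R',\rho'}=C_{R'}\cup C_{\rho'}$ in only finitely many points. Parametrising $\gamma_i^\rho$ by a Lipschitz map on a circle (Lemma \ref{Lipschitz}), the preimage of the open set $A_{R',\rho'}$ is an open set whose boundary points map into $\gamma_i^\rho\cap \partial A_{R',\rho'}$; since the latter is finite, the preimage decomposes into finitely many open arcs. Consequently each $\gamma_i^\rho\cap A_{R',\rho'}$ is a finite union of connected rectifiable simple curves $\Gamma^{\rho'}_{i,1},\dots,\Gamma^{\rho'}_{i,k_i}$. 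Relabelling the doubly-indexed family as $\{\Gamma_j^{\rho'}\}_{j\in\mathbb{N}}$ delivers the countable union stated in the lemma.

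The only real subtlety is the simultaneous choice of $\rho'$ and $R'$ that works for every Jordan curve at once; this is where the countable-union-of-null-sets argument is essential. Once that radii selection is made, everything else is a direct transcription of the argument in Lemma \ref{simplecurves1}, replaced by the annular version. Note that we have not yet shown that there are only finitely many $\Gamma_j^{\rho'}$, nor that $B_R^+$ has finite perimeter in $B_R$; these stronger statements will be addressed in the subsequent work in this section.
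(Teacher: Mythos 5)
Your proposal is correct and follows essentially the same route as the paper, which simply states that the proof is identical to that of Lemma \ref{simplecurves1} via the coarea formula applied to $f(z)=|z|$. Your explicit treatment of the simultaneous choice of $\rho'$ and $R'$ outside the countable union of null sets $\bigcup_i N_i$ is a legitimate (and welcome) filling-in of a detail the paper leaves implicit.
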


\begin{figure}[ht!]
\begin{center}
\scalebox{0.5}{\input{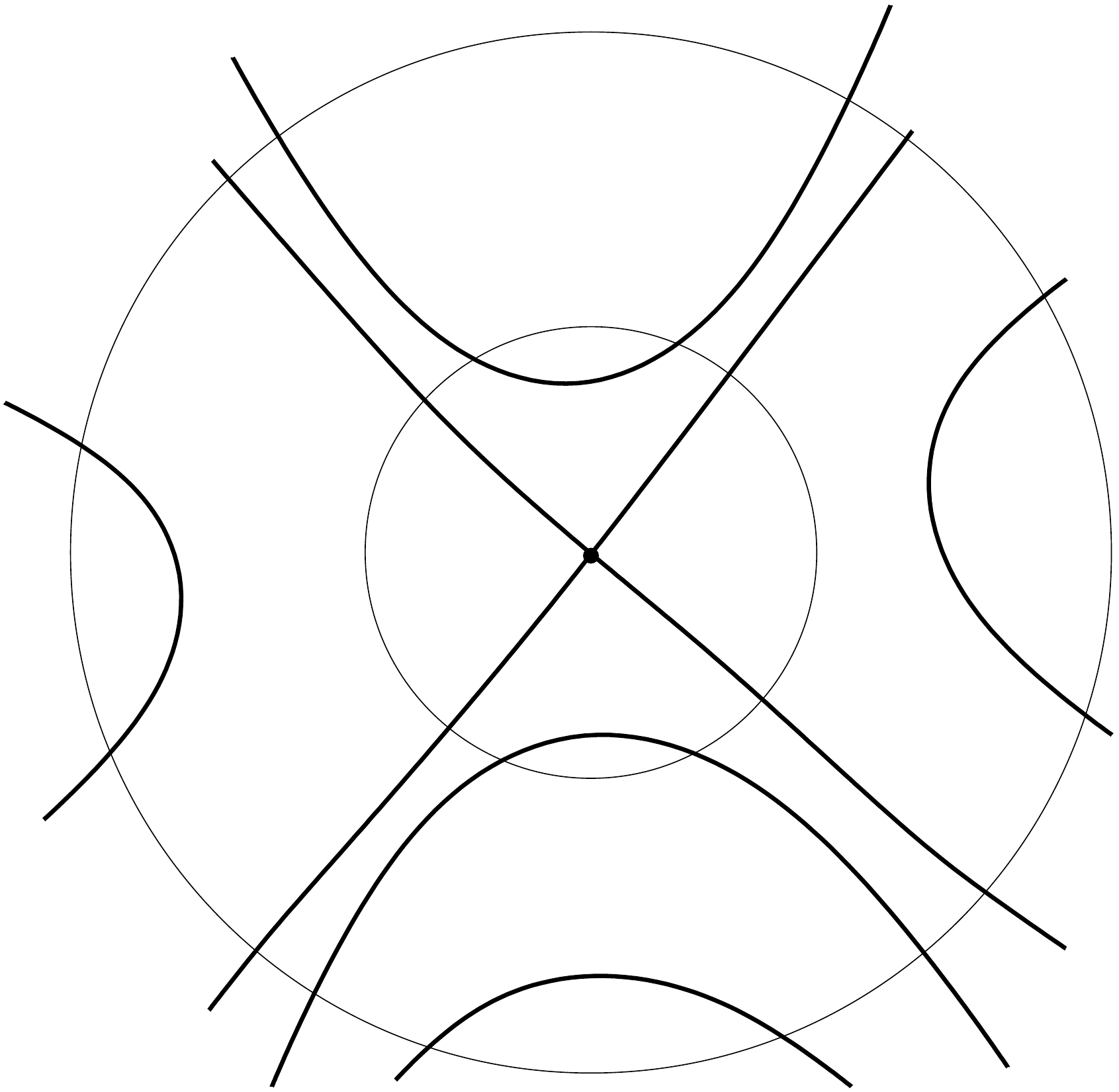_t}}
\end{center}
\caption{Illustration of Lemma \ref{simplecurvesannulus}}
\end{figure}

\begin{proof}
The proof is the same as in Lemma \ref{simplecurves1}, it uses the coarea formula (see \ref{coarea formula}).
\end{proof}

\begin{lemma}
Under the same assumptions as in Lemma \ref{jordancurvesannulus}, for all $j \in \mathbb{N}$, there exists $c_j^{\rho'} \in \R$ such that 
$\Gamma_i^{\rho'}$ is exactly one connected component of  $\{z \in \R^2 ; H(z)=c_i^{\rho'} \} \cap A_{R',\rho'}$.
\end{lemma}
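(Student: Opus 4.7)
The strategy is a direct adaptation of Claim \ref{cc} from the previous section, transposed to the annular setting. The argument splits into two halves: first I would show that $H$ is constant on $\Gamma_j^{\rho'}$, which defines the value $c_j^{\rho'}$; then I would show that $\Gamma_j^{\rho'}$ already exhausts the connected component of the corresponding level set in which it sits.

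For the first half, pick any two points $x,y \in \Gamma_j^{\rho'}$. By Lemma \ref{Lipschitz} applied to the compact connected rectifiable set $\Gamma_j^{\rho'}$, there exists a Lipschitz one-to-one map $f: [0,1] \to \Gamma_j^{\rho'}$ with $f(0)=x$ and $f(1)=y$. Since $H$ is smooth on $\overline{A_{R',\rho'}}$, the composition $H \circ f$ lies in $W^{1,\infty}([0,1]) \subset W^{1,1}([0,1])$ and is therefore absolutely continuous, so
\begin{equation}
H(y) - H(x) = \int_0^1 \nabla H(f(t)) \cdot f'(t)\, dt. \nonumber
\end{equation}
By Lemma \ref{collinear2}, $\nabla H$ is collinear with the generalized normal $\nu_{A_{R',\rho'}^+}$ at $\mathcal{H}^1$-almost every point of $\partial_\star A_{R',\rho'}^+ \setminus \partial A_{R',\rho'}$, hence orthogonal to the tangent $f'(t)$ for a.e. $t$. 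Thus $H(y)=H(x)$, and we may define $c_j^{\rho'} := H|_{\Gamma_j^{\rho'}}$.

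For the second half, I would shrink $\rho'$ (and $R'$) if necessary so that $g(z)=(z-z_0)^n\varphi_1(z)$ does not vanish on $A_{R',\rho'}$; this is possible since the only zero of $g$ in $B_R$ is $z_0$. Then $\nabla H = (\partial_x H, \partial_y H) \neq 0$ throughout $A_{R',\rho'}$, so by the implicit function theorem every connected component of $\{H=c_j^{\rho'}\} \cap A_{R',\rho'}$ is either a smooth embedded circle contained in $A_{R',\rho'}$ or a smooth simple arc with both endpoints on $\partial A_{R',\rho'}$ (meeting it transversally). Let $C_j$ denote the connected component containing $\Gamma_j^{\rho'}$.

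Finally I would show $\Gamma_j^{\rho'} = C_j$ by transposing Lemma \ref{connectedcurve} to the annulus: $\Gamma_j^{\rho'}$ is by construction a maximal connected simple piece of $\partial_\star A_{R',\rho'}^+ \setminus \partial A_{R',\rho'}$ (coming from the decomposition in Lemma \ref{simplecurvesannulus}) and it is contained in the simple curve $C_j$. The same topological dichotomy argument as in Lemma \ref{connectedcurve}, applied to Lipschitz parametrizations of $\Gamma_j^{\rho'}$ and $C_j$, then forces equality: any point of $C_j \setminus \Gamma_j^{\rho'}$ would disconnect a parametrization of $C_j$ into two relatively open pieces, only one of which can meet $\Gamma_j^{\rho'}$, contradicting either the maximality of $\Gamma_j^{\rho'}$ or the fact that both endpoints of $C_j$ land on $\partial A_{R',\rho'}$ (in the arc case) or $C_j$ is closed (in the loop case). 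The main obstacle is this last step, where one must handle both shapes of connected component and carefully use the decomposition property of $\partial_\star A_{R',\rho'}^+$ from Theorem \ref{ACMM} to conclude that $\Gamma_j^{\rho'}$ cannot be strictly contained in $C_j$.
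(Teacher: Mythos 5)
Your proposal is correct and follows essentially the same route as the paper, which simply says to repeat the proof of Claim \ref{cc} line by line, noting only that the level set $\{H=c_i^{\rho'}\}\cap A_{R',\rho'}$ need not be connected so one must work with the connected component containing $\Gamma_j^{\rho'}$ — exactly what you do. Your extra care about the closed-loop case is harmless but unnecessary: since $H$ is harmonic on all of $B_R$, a closed level curve in the annulus would bound a disk in $B_R$ on whose boundary $H$ is constant, forcing $H$ constant by the maximum principle, so only arcs meeting $\partial A_{R',\rho'}$ occur.
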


\begin{proof}
Again in order to prove this lemma we can follow line by line the proof of claim \ref{cc}. The only difference is that here $ \{z \in \R^2 ; H(z)=c_i \} \cap A_{R',\rho'}$ is not necessarily connected if $c_i \neq 0$.
\end{proof}

\begin{lemma}
Under the same assumptions as in Lemma \ref{simplecurvesannulus} with $R'$ sufficiently small there exists a finite number $N_{\rho'}$ of curves $\Gamma_j^{\rho'}$ such that $\Gamma_j^{\rho'} \cap A_{R',\rho'} \neq \emptyset$. We then have 
\begin{eqnarray} 
\supp(\mu_{\lfloor_{A_{R',\rho'}}}) &=& \partial_\star (A_{R',\rho'}^+) \setminus \partial A_{R',\rho'} \nonumber \\
& =& \bigcup _{j=1}^{N_{\rho'}} \Gamma_j^\rho . \nonumber
\end{eqnarray}
\end{lemma}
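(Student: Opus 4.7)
The plan is to mimic the argument of Claim \ref{finite} in the annular setting. First, the coarea formula (Theorem \ref{coarea formula}) applied to $f(z)=|z-z_0|$ on the rectifiable set $\bigcup_j \Gamma_j^{\rho'}$, combined with $\theta\in BV(A_{R',\rho'})$ from Lemma \ref{lemma2.1}, yields
$$\int_{\rho'}^{R'}\mathcal{H}^0\Bigl(\bigcup_j \Gamma_j^{\rho'}\cap C_t\Bigr)\,dt \leq \mathcal{H}^1\Bigl(\bigcup_j\Gamma_j^{\rho'}\Bigr)\leq \mathcal{H}^1(\partial_\star A_{R',\rho'}^+)<+\infty,$$
so the slice $\mathcal{H}^0(\bigcup_j\Gamma_j^{\rho'}\cap C_t)$ is finite for almost every $t\in(\rho',R')$.

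Next I would rule out that any $\Gamma_j^{\rho'}$ is a closed curve inside $A_{R',\rho'}$. Since $g=(z-z_0)^n\varphi_1$ with $\varphi_1(z_0)\neq 0$, $\nabla H$ vanishes only at $z_0$ and $H$ is non-constant harmonic throughout $B_{R'}$. A closed Jordan curve $\Gamma\subset A_{R',\rho'}$ with $H\equiv c$ on it would bound a bounded region $D\subset\R^2$. Either $z_0\notin D$ (forcing $D\subset A_{R',\rho'}$) or $z_0\in D$ (forcing $D\supset B_{\rho'}$); in both cases $H$ is harmonic in $D$ with $H\equiv c$ on $\partial D$, so $H\equiv c$ in $D$ by the maximum principle, contradicting that $H$ is non-constant in $B_{R'}$. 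Hence each $\Gamma_j^{\rho'}$ has two endpoints on $C_{R'}\cup C_{\rho'}$.

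I would then choose $R'$ small enough so that Theorem \ref{courbes harmoniques} applies on $B_{R'}$ and $\{H=0\}\cap B_{R'}$ consists of $n+1$ analytic arcs through $z_0$ dividing $B_{R'}$ into $2(n+1)$ sectors of alternating sign. Each $\Gamma_j^{\rho'}$ is a component of some $\{H=c_j\}\cap A_{R',\rho'}$: if $c_j=0$ it connects $C_{\rho'}$ to $C_{R'}$ along one of the $2(n+1)$ radial arcs, and if $c_j\neq 0$ it is trapped in a single sector. The main obstacle is to show that in either case $f|_{\Gamma_j^{\rho'}}$ spans an interval of length uniformly bounded below, so that each $\Gamma_j^{\rho'}$ contributes at least a fixed positive amount to the coarea integral above and there can therefore be only finitely many of them. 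For curves with $c_j\neq 0$ and both endpoints on $C_{R'}$ this is not automatic; I expect to exploit the continuity constraint $H|_{\Gamma_j^{\rho'}}=(c_-^j-c_+^j)/2$, which comes from matching the local representations $h=\theta H+c_\pm$ across $\Gamma_j^{\rho'}$, together with the normalization $h(z_0)=0$ and uniform smallness of $H$ on $B_{R'}$, to restrict the admissible level values $c_j$ to a finite list and exclude accumulation near $C_{R'}$. Once finiteness is established, the chain of equalities $\supp(\mu_{\lfloor A_{R',\rho'}})=\partial_\star A_{R',\rho'}^+\setminus\partial A_{R',\rho'}=\bigcup_{j=1}^{N_{\rho'}}\Gamma_j^{\rho'}$ follows from Lemmas \ref{support2} and \ref{simplecurvesannulus}.
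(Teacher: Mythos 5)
Your first two steps (the coarea estimate and the exclusion of closed level curves by the maximum principle) coincide with the paper's argument, and your diagnosis of the remaining difficulty is accurate: a slice $C_t$ with $t\in(\rho',R')$ only counts those curves whose radial range contains $t$, so components of $\{H=c_j\}\cap A_{R',\rho'}$ hugging $C_{R'}$ (or $C_{\rho'}$) escape every such slice, and finiteness of the coarea integral alone does not exclude infinitely many of them. The problem is that your proposed repair does not close this gap: the matching relation $H|_{\Gamma_j^{\rho'}}=(c_-^j-c_+^j)/2$ merely expresses $c_j$ in terms of the locally constant function $h-\theta H$, and nothing bounds the number of values that function takes on the (a priori infinitely many) components of $A_{R',\rho'}\setminus\bigcup_j\Gamma_j^{\rho'}$; there is no finite list of admissible levels to be extracted this way. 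As written, the finiteness claim is therefore not established.

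The missing idea is to slice \emph{outside} the annulus $A_{R',\rho'}$, in the collars $(\rho,\rho')$ and $(R',R)$. Each $\Gamma_j^{\rho'}$ is a piece of an arc of $\supp\mu\cap A_{R,\rho}=\partial_\star A_{R,\rho}^+\setminus\partial A_{R,\rho}$; by Lemma \ref{jordancurvesannulus} these arcs come from closed Jordan curves, so each arc has both of its endpoints on $C_R\cup C_\rho$, and by the argument of Claim \ref{cc} (which only uses Lemma \ref{colinear}) each arc lies on a single level set of $H$. Consequently an arc that meets $A_{R',\rho'}$ has radial range containing either $[R',R]$ or $[\rho,\rho']$, hence it crosses $C_{t_1}$ for every $t_1\in(R',R)$ or $C_{t_2}$ for every $t_2\in(\rho,\rho')$. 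Choosing $t_1,t_2$ so that the coarea slice counts in the larger annulus $A_{R,\rho}$ are finite bounds the number of such arcs, and the generic choice of $R',\rho'$ made in Lemma \ref{simplecurvesannulus} guarantees that each arc is cut by $C_{R'}\cup C_{\rho'}$ into finitely many pieces; together this gives $N_{\rho'}<+\infty$. This is what the paper's one-line proof is pointing at when it invokes Claim \ref{finite} together with the fact that the curves are level curves of the harmonic function $H$: the maximum principle forces those level curves to propagate all the way to the boundary of the \emph{larger} domain, which supplies exactly the uniform radial extent whose absence you correctly identified as the obstacle.
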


\begin{proof}
As in Lemma \ref{finite} this is due to the coarea formula and the fact that the curves $\Gamma_i^{\rho'}$ are level curves of the harmonic function $H$.
\end{proof}

The next result shows that, with $R'$ fixed if we take a larger annulus, then the number of curves in the decomposition of the support of $\mu$ is the same. This is due to the geometry of these curves since they are level curves of the harmonic function $H$.
\begin{lemma}\label{indN}
Under the same assumptions as in Lemma \ref{simplecurvesannulus} let $\rho'_1 < \rho'_2$ and $R'$ as before small enough. Then using the previous notations we have 
$N_{\rho'_2} = N_{\rho'_1}$ and, up to re-order it holds $\Gamma_j^{\rho'_1} \subset \Gamma_j^{\rho'_2}$ for $j=1,..., N_{\rho'_1}$.
\end{lemma}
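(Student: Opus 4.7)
The plan is to identify each curve $\Gamma_j^{\rho'}$ with the truncation to the annulus of a fixed analytic arc of $\{H=0\}$; once this is done, the radial monotonicity of these arcs near $z_0$ will yield both the equal count and the inclusion. First I would apply Theorem \ref{courbes harmoniques} to $H$ at $z_0$: since $g(z) = (z-z_0)^n \varphi_1(z)$ with $\varphi_1(z_0) \ne 0$, the function $H$ has a zero of order $n+1$ at $z_0$, so for $R'$ small enough $\{H=0\}\cap B_{R'}$ is a union of $n+1$ analytic curves through $z_0$, or equivalently of $2(n+1)$ arcs $\tilde\gamma_1, \ldots, \tilde\gamma_{2(n+1)}$ each running from $z_0$ to $\partial B_{R'}$. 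These arcs cut $B_{R'}\setminus\{z_0\}$ into $2(n+1)$ open sectors, all adjacent to $z_0$, on each of which $H$ has a constant sign.

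The key step is to show that $\supp \mu \cap B_{R'} \subset \{H = 0\}$, i.e.\ that $\theta$ is constant on each sector. On any connected component $U$ of $\{\theta=+1\}$ or $\{\theta=-1\}$, the relation $\nabla h = \theta \nabla H$ gives $h = \theta H + c_U$ on $U$; when $z_0 \in \overline{U}$, continuity of $h$ together with $h(z_0) = H(z_0) = 0$ forces $c_U = 0$, so $h = \theta H$ on every $z_0$-adjacent component. Suppose now $\theta$ jumps along a level curve $\{H = c\}$ strictly inside a sector $S$, with $c \neq 0$ (of the sign of $H$ on $S$): the outer, non-$z_0$-adjacent component then satisfies $h = -\theta H + 2c$, which on an arc $\tilde\gamma_k$ bounding $S$ (where $H=0$) gives $h = 2c$; matching this with the value $h = 0$ coming from the $z_0$-adjacent component on the other side of $\tilde\gamma_k$ yields $2c = 0$, a contradiction.

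Finally, from the expansion $H(z) = \text{Re}\bigl(\tfrac{\varphi_1(z_0)}{n+1}(z-z_0)^{n+1}\bigr) + O(|z-z_0|^{n+2})$ each $\tilde\gamma_k$ is tangent to one of the $2(n+1)$ rays at $z_0$, and for $R'$ small enough $|z-z_0|$ is strictly monotonic along $\tilde\gamma_k$, so $\tilde\gamma_k \cap A_{R',\rho'}$ is one connected simple arc from $\{|z|=\rho'\}$ to $\{|z|=R'\}$ for every $\rho' \in (0,R')$. Combined with the previous step, $\supp(\mu_{\lfloor A_{R',\rho'}}) = \bigcup_{k \in S} \tilde\gamma_k \cap A_{R',\rho'}$ for a fixed subset $S \subseteq \{1,\ldots,2(n+1)\}$ consisting of those arcs across which $\theta$ actually jumps. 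Hence $N_{\rho'} = |S|$ independently of $\rho'$, and after re-indexing by $k \in S$ one obtains $\Gamma_j^{\rho'_2} \subset \Gamma_j^{\rho'_1}$ whenever $\rho'_1 < \rho'_2$. The main technical obstacle is the no-nesting step: the single-level nested configuration above produces a clean contradiction, but ruling out deeper configurations (where the component of the neighboring sector touching $\tilde\gamma_k$ is itself nested and not the $z_0$-adjacent one) will require an inductive argument on the depth of nesting, using that $\mu$ is a Radon measure to bound the depth.
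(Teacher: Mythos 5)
Your plan rests on an intermediate claim that is both unproved and, at this stage of the argument, false in general: that $\supp\mu\cap B_{R'}\subset\{H=0\}$, i.e.\ that $\theta$ is constant on each of the $2(n+1)$ sectors cut out by the zero set of $H$. For the radius $R'$ of Lemma \ref{simplecurvesannulus} (chosen only so that the coarea and maximum-principle arguments apply), the support may perfectly well contain components of \emph{nonzero} level sets $\{H=c_j\}$; this is exactly why the paper's Theorems \ref{theorem1} and \ref{theorem2} only obtain $\supp(\mu_{\lfloor V})\subset\{H=0\}$ after a \emph{further} shrinking of the neighborhood, a shrinking that requires first knowing the total number of curves near $z_0$ is finite. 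Lemma \ref{indN} is precisely the tool that rules out accumulation of such nonzero-level curves at $z_0$ (by showing the count in the annulus does not grow as the inner radius shrinks); you cannot assume its conclusion, in strengthened form, as your starting point. Moreover your concrete contradiction in the ``no-nesting'' step does not work even in the single-level case: if $\theta$ jumps across $\{H=c\}$, $c\neq 0$, inside a sector $S$ where $H$ has the sign of $c$, the far component is the region $\{|H|>|c|\}\cap S$, and this region does not touch the bounding arcs $\tilde\gamma_k\subset\{H=0\}$ of the sector (near $z_0$ the level curve $\{H=c\}$ separates $z_0$ and both arcs from the rest of the sector). So there is no point at which to match $h=2c$ against $h=0$, and the inductive repair you allude to is not carried out.

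The paper's own proof is much softer and avoids all of this. It uses only two facts already established: (i) $\supp(\mu_{\lfloor A_{R,\rho_2}})=\supp(\mu_{\lfloor A_{R,\rho_1}})\cap\overline{A_{R,\rho_2}}$, and (ii) each $\Gamma_j^{\rho}$ is a connected component of a level set $\{H=c_j^{\rho}\}$, and by the maximum principle every such component inside $B_{R'}$ (for $R'$ small) must reach $\partial B_{R'}$, hence meets the smaller annulus $A_{R',\rho_2'}$. Consequently every curve of the larger annulus already appears among the curves of the smaller one and vice versa, so the collections of levels and of components coincide and $N_{\rho_1'}=N_{\rho_2'}$ with the stated nesting. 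I would redo your argument along these lines: forget about locating the support inside $\{H=0\}$, and instead show directly that no level-curve component can appear in $A_{R',\rho_1'}$ without already being visible in $A_{R',\rho_2'}$.
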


\begin{proof}
Using the previous notations we have:
$$\supp (\mu_{\lfloor A_{R,\rho_1}})=\bigcup _{j=1}^{N_{\rho_1}} \Gamma_j^{\rho_1} $$
$$\supp (\mu_{\lfloor A_{R,\rho_2}})=\bigcup _{j=1}^{N_{\rho_2}} \Gamma_j^{\rho_2}. $$
Besides it holds that $\supp(\mu_{\lfloor A_{R,\rho_2}})=\supp(\mu_{\lfloor A_{R,\rho_1}}) \cap \overline{A_{R,\rho_2}}$. We thus deduce that 
$$\bigcup _{j=1}^{N_{\rho_1}} \Gamma_j^{\rho_1} \subset \bigcup _{j=1}^{N_{\rho_2}} \Gamma_j^{\rho_2}.$$

\noindent We also recall that we have the existence of real numbers $(c_j^{\rho_1}), j=1,..., N_{\rho_1}$ and $(c_j^{\rho_2}), j=1,..., N_{\rho_2}$ such that $\Gamma_j^{\rho_1}$ is exactly one connected component of $\{H=c_j^{\rho_1} \} \cap A_{R,\rho_1}$ and  $\Gamma_j^{\rho_2}$ is exactly one connected component of $\{H=c_j^{\rho_2} \} \cap A_{R,\rho_2}$. \\

\noindent Assume that there exists $c_{j_0}^{\rho_1}$ which is different from all the $c_i^{\rho_2}$ for $i=1,...,N_{\rho_2}$. Thanks to the maximum principle every connected component of level curves of the harmonic function $H$ which lies in the ball $B_R$ meets the boundary of this ball if $R$ is small enough. We thus obtain that $\Gamma_{j_0}^{\rho_1} \cap A_{R,\rho_2} \neq \emptyset$ and then 
$$\Gamma_{j_0}^{\rho_1} \cap A_{R,\rho_2} \subset \supp(\mu_{\lfloor A_{R,\rho_2}}).$$

\noindent As a consequence we obtain that $\Gamma_{j_0}^{\rho_1} \cap A_{R,\rho_2}= \Gamma_{i_0}^{\rho_2}$ for some $1\leq i_0 \leq N_{\rho_2}$. This is a contradiction with our hypothesis on $c_{j_0}^{\rho_1}$. We then have 
$$ \{c_j^{\rho_1} \}_j =\{c_i^{\rho_2} \}_i .$$
With the same justification we prove that $N_{\rho_1}=N_{\rho_2}$. And then up to reorder we have 
$$ \Gamma_j^{\rho'_1} \subset \Gamma_j^{\rho'_2} \ \text{for} \ j=1,..., N_{\rho'_1}.$$
\end{proof}

We are now in position to prove:

\begin{lemma}
Let $\theta$, $H$ be such that \eqref{ImP2} holds with $h$ which satisfies \eqref{C1}, \eqref{C2}, \eqref{C3}. As before we set $B_R^+=\{z\in B_R; \theta(z)=+1\}$. Then there exists $R>0$ small enough such that $\mathcal{H}^1(\partial_\star B_R^+ \setminus \partial B_R) <+ \infty$ and consequently
$\theta$ is in $BV(B_R)$.
\end{lemma}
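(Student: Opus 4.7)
The plan is to pass to the limit $\rho'\to 0$ in the annular description provided by Lemma \ref{indN} and to bound the total length of the level curves that comprise the support of $\mu$ in $B_R\setminus\{z_0\}$. By Lemma \ref{indN}, there exist an integer $N$ and real numbers $c_1,\ldots,c_N$ (all independent of $\rho'$, for $R'$ fixed small enough) such that for every $\rho'\in (0,R')$,
\[
\supp\mu_{\lfloor A_{R',\rho'}} \;=\;\bigcup_{j=1}^N \Gamma_j^{\rho'},
\]
where each $\Gamma_j^{\rho'}$ is a connected component of $\{H=c_j\}\cap A_{R',\rho'}$, with the monotonicity $\Gamma_j^{\rho_1}\subset \Gamma_j^{\rho_2}$ whenever $\rho_1<\rho_2$. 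Setting $\Gamma_j:=\bigcup_{\rho'>0}\Gamma_j^{\rho'}\subset\{H=c_j\}\cap (B_{R'}\setminus\{z_0\})$, we obtain $\partial_\star B_{R'}^+\setminus \partial B_{R'} \subset \bigcup_{j=1}^N \overline{\Gamma_j}$, so it suffices to establish $\mathcal{H}^1(\Gamma_j)<+\infty$ for each $j$.

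For this I would use the local normal form for a harmonic function at an isolated critical point. Writing $H=\text{Re}(F)$ with $F$ holomorphic in $B_{R'}$ and factoring its zero at $z_0$, one has $F(z)=(z-z_0)^m u(z)$ with $u(z_0)\neq 0$. On a small simply connected neighborhood a holomorphic $m$-th root of $u$ yields a biholomorphism $\Phi$ with $\Phi(z_0)=0$ and $H(z)=\text{Re}(\Phi(z)^m)$; the integer $m$ coincides with $n$ in Theorem \ref{courbes harmoniques}. In the coordinate $w=\Phi(z)$ the level set $\{H=c_j\}$ becomes $\{\text{Re}(w^m)=c_j\}$, i.e., $r^m\cos(m\varphi)=c_j$ in polar coordinates, which is a real-analytic curve whose length in any bounded region is easily estimated and finite. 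Pulling back by the smooth diffeomorphism $\Phi^{-1}$ preserves finite length, so $\mathcal{H}^1(\{H=c_j\}\cap B_{R'})<+\infty$, and hence summing over $j=1,\ldots,N$ gives
\[
\mathcal{H}^1(\partial_\star B_{R'}^+\setminus\partial B_{R'})\;\leq\;\sum_{j=1}^N \mathcal{H}^1(\Gamma_j)\;<\;+\infty.
\]

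With this bound in hand, the BV conclusion follows by a standard cut-off at the point $z_0$. Lemma \ref{lemma2.1} already provides $\theta\in BV_{loc}(B_{R'}\setminus\{z_0\})$, and the total variation $|D\theta|(B_{R'}\setminus\{z_0\})=2\mathcal{H}^1(\partial_\star B_{R'}^+\setminus \partial B_{R'})$ is now finite. For any test vector field $\varphi\in\mathcal{C}^1_c(B_{R'},\mathbb{R}^2)$ with $|\varphi|\leq 1$, I will introduce a radial cut-off $\eta_\e\in\mathcal{C}^\infty(\mathbb{R}^2)$ vanishing on $B(z_0,\e/2)$, equal to $1$ outside $B(z_0,\e)$, with $|\nabla\eta_\e|\leq C/\e$, then estimate $\int_{B_{R'}}\theta\,\dive(\eta_\e\varphi)$ using the uniform $BV_{loc}$ bound, observing that the error term $\int \theta\,\nabla\eta_\e\cdot \varphi$ is $O(\e)$ because $\|\theta\|_\infty=1$ and $\int_{B(z_0,\e)}|\nabla\eta_\e|\,dx=O(\e)$. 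Passing to the limit $\e\to 0$ yields $\theta\in BV(B_{R'})$, and we set $R:=R'$.

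The main obstacle will be the length bound on $\Gamma_j$ when $c_j\neq 0$: a priori such a level curve of the harmonic function $H$ could accumulate at the critical point $z_0$ with unbounded length, so the BV regularity cannot be concluded purely from the Radon-measure hypothesis on $\mu$ (since $|\nabla H|$ vanishes at $z_0$). The local normal form $H\circ\Phi^{-1}(w)=\text{Re}(w^m)$ is exactly what rules out this pathology, reducing every level curve near $z_0$ to the pull-back of a real-algebraic curve of manifestly finite length. Establishing this normal form carefully is the key technical step.
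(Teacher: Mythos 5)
Your proof is correct, and it splits naturally into two halves that compare differently with the paper's. The first half --- establishing $\mathcal{H}^1(\partial_\star B_R^+ \setminus \partial B_R)<+\infty$ --- is essentially the paper's argument: both rely on Lemma \ref{indN} to obtain a number $N$ of level curves $\{H=c_j\}$ independent of the inner radius of the annulus, and on the finiteness of the length of level curves of $H$ near the critical point. You take the increasing union over $\rho'\to 0$ directly where the paper argues by contradiction (a cosmetic difference), and you supply a justification of the finite-length claim via the normal form $H\circ\Phi^{-1}(w)=\mathrm{Re}(w^m)$, which the paper merely asserts (``for $R$ small enough the level curves of $H$ have a finite Hausdorff measure''); that added detail is welcome. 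The second half is genuinely different. The paper returns to the distributional identity $\partial_x\theta-i\partial_y\theta=\mu/g$ from the proof of Lemma \ref{lemma2.1}, combines it with the representation $\mu_{\lfloor B_R}=-2\nabla H\cdot\nu_{B_R^+}\,\mathcal{H}^1_{\lfloor \partial_\star B_R^+\setminus\partial B_R}$ of Lemma \ref{support2}, and bounds $|\int_{B_R}\theta\dive\varphi|$ by $4\,\mathcal{H}^1(\partial_\star B_R^+\setminus\partial B_R)$ using $|\partial_xH|/|\nabla H|\le 1$. You instead invoke the removability of an $\mathcal{H}^1$-null set (the single point $z_0$) for a bounded function that is $BV$ with finite total variation off that set, implemented by a radial cut-off with $O(\e)$ error, together with the De Giorgi--Federer identification $|D\theta|=2\,\mathcal{H}^1_{\lfloor\partial^\star B_R^+}$. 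Your route is arguably more robust: it sidesteps the question of whether the identity $\nabla\theta=\frac{\nabla H}{|\nabla H|^2}\mu$, which was derived only on $B_R\setminus\{z_0\}$, extends across the point where $\nabla H$ vanishes --- a point the paper glosses over when it tests against arbitrary $\varphi\in\mathcal{C}^1_c(B_R)$. What the paper's computation buys in exchange is an explicit constant and a self-contained derivation that does not appeal to the structure theorem for the variation measure.
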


\begin{proof}
By contradiction if $\mathcal{H}^1(\partial_\star B_{R}^+ \setminus \partial B_{R} )= +\infty$ then for all sequence $(\rho_n)$ of real numbers such that $\rho_n \searrow 0$ we have 
\begin{equation}\label{limit2}
\lim_{n\rightarrow +\infty} \mathcal{H}^1(\partial_\star A_{R,\rho_n}^+ \setminus \partial A_{R,\rho_n}) =+\infty.
\end{equation}
\noindent with $A_{R,\rho}=\{z \in \C ; \rho< |z| < R\}$ and $A_{R,\rho^+}= A_{R,\rho} \cap B_R^+$.
This is due to the fact that 

\begin{eqnarray}
\partial_\star B_{R}^+ \setminus \partial B_{R}& =&\bigcup _{n \in \mathbb{N}} \partial_\star B_{R}^+ \cap A_{R,\rho_n } \nonumber \\
&=& \partial_\star A_{R,\rho_n}^+ \setminus \partial A_{R,\rho_n} \nonumber
\end{eqnarray}
and the union of these sets is increasing. We now use the previous Lemmas \ref{simplecurvesannulus} and \ref{indN} to obtain that for $R$ small enough there exists an integer $N$ such that for all $n \in \mathbb{N}$ there are $N$ simple connected rectifiable curves  $\Gamma_j^{\rho_n}$ and $N$ real numbers $c_j$ such that 
$$\partial_\star A_{R,\rho_n}^+ \setminus \partial A_{R,\rho_n}=\bigcup_{j=1}^N \Gamma_j^{\rho_n}$$
and $\Gamma_j^{\rho_n}=\{H=c_j\} \cap A_{R,\rho_n}$. Furthermore we also have $\Gamma_j^{\rho_n} \subset \Gamma_j^{\rho_m}$ if $n>m$. We then obtain

\begin{eqnarray}
\H^1\left(\partial_\star B_{R}^+ \cap A_{R,\rho_n}\right) & \leq & \H^1\left(\bigcup_{i=1}^N \{H=c_i\} \cap B_{R}\right) \nonumber \\
& \leq & \sum_{i=1}^N \H^1\left(\{H=c_i\} \cap B_{R}\right). \nonumber
\end{eqnarray} 
But for $R$ small enough the level curves of $H$ have a finite Hausdorff measure. Thus there exists $M>0$ such that for all $n\in \mathbb{N}$,
$$\H^1(\partial_\star B_{R}^+ \cap A_{R,\rho_n} ) \leq M.$$
This is a contradiction with \eqref{limit2} and then $\mathcal{H}^1(\partial_\star B_R^+ \setminus \partial B_R) <+ \infty$. \\

Now we prove that $\theta \in BV(B_{R})$. We recall that in the proof of Lemma \ref{lemma2.1} we found that
$$\partial_x\theta -i\partial_y\theta =\frac{\Delta h}{g}=\frac{1}{g}\mu $$
in the sense of distributions where $g=\partial_xH-i\partial_yH$. We then have
$$\partial_x\theta -i\partial_y \theta =\frac{1}{|g|^2}\overline{g}\mu =\frac{(\partial_xH+i\partial_yH)\mu}{|\nabla H|^2}$$
thus $\partial_x\theta =\frac{\partial_xH}{|\nabla H|^2}\mu$ and $\partial_y\theta =\frac{\partial_yH}{|\nabla H|^2}\mu$. Now for all $\varphi \in \mathcal{C}^1_c(B_{R'},\R^2)$ with $|\varphi| \leq 1$
\begin{eqnarray}
\int_{B_{R}}\theta \dive  \varphi &=&-\langle \partial_x\theta,\varphi_1 \rangle -\langle \partial_y\theta,\varphi_2\rangle \nonumber \\
&=& -\int_{B_{R}} \frac{\partial_xH}{|\nabla H|^2}\varphi_1 d\mu +\int_{B_{R}} \frac{\partial_yH}{|\nabla H|^2}\varphi_2 d\mu. \nonumber
\end{eqnarray}
We now use Lemma \ref{support2} to say that 
$$\mu_{\lfloor B_{R}} =-2\nabla H\cdot\nu_{B_{R}}^+ \H^1_{\lfloor \partial _\star B_{R}^+ \setminus \partial B_{R}}$$
hence 
\begin{eqnarray}
\int_{B_{{R}}}\theta \dive  \varphi &=& 2 \int_{\partial _\star B_{R}^+ \setminus \partial B_{R}} \nabla H\cdot \nu_{B_{R}^+} \frac{\partial_xH \varphi_1}{|\nabla H|^2}d\H^1 
-2 \int_{\partial _\star B_{R}^+ \setminus \partial B_{R}} \nabla H\cdot \nu_{B_{R}^+} \frac{\partial_yH \varphi_2}{|\nabla H|^2}d\H^1 .\nonumber
\end{eqnarray}
We thus deduce, using the fact that $|\frac{\partial_xH}{|\nabla H|^2} |\leq 1$ and $|\frac{\partial_yH}{|\nabla H|^2} |\leq 1$, that 
$$|\int_{B_{{R}}}\theta \dive  \varphi | \leq 4\H^1(\partial _\star B_{R}^+ \setminus \partial B_{R}) <+\infty $$
for all $\varphi \in \mathcal{C}^1_c(B_{R},\R^2)$ ; $|\varphi|\leq 1$. This proves the claim. 
\end{proof}

From this point we have all the ingredients to pursue the proof of Theorem \ref{theorem2}  as in the previous section. 
\begin{proof}[Proof of Theorem \ref{theorem2}]

\begin{claim}
There exist $R>0$ small enough, a finite number $N$ and $N$ simple, connected, rectifiable curves $\Gamma_j$ such that 
\begin{eqnarray}
\supp(\mu_{\lfloor _{B_{R}}})=  \partial_\star B_{R}^+ \setminus \partial B_{R} 
= \bigcup_{j=1}^N \Gamma_j .\nonumber 
\end{eqnarray}

Furthermore for all $1 \leq j \leq N$ there exists $c_j$ such that $\Gamma_j$ is exactly a connected component of the level set $\{z \in \C , H(z)=c_j \} \cap B_{R}$.\\
\end{claim}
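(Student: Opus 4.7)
The plan is to piece together the preceding lemmas exactly as we did in Section 3, handling the single extra difficulty that $z_0$ is now a critical point of $H$. By the lemma immediately preceding the claim, $\theta \in BV(B_R)$ for $R$ small enough, so $B_R^+$ has finite perimeter in $B_R$. Extending $\theta$ by $-1$ outside $B_R$ exactly as in Lemma \ref{perimetrefini} upgrades this to $\chi_{B_R^+} \in BV(\R^2)$, which lets me invoke Theorem \ref{ACMM} to obtain a decomposition $\partial_\star B_R^+ = \bigcup_i \gamma_i$ into (possibly infinitely many) pairwise disjoint rectifiable Jordan curves.

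Next, as in Lemma \ref{simplecurves1}, I apply the coarea formula (Theorem \ref{coarea formula}) with $f(z)=|z|$ on the $\mathcal{H}^1$-rectifiable set $\bigcup_i \gamma_i$ to select $R'<R$ such that $\mathcal{H}^0(\gamma_i \cap C_{R'})<+\infty$ for every $i$. Each intersection $\gamma_i \cap B_{R'}$ is then a finite disjoint union of connected rectifiable simple arcs, which together provide the family $\{\Gamma_j\}_j$ with $\partial_\star B_{R'}^+ \setminus \partial B_{R'}=\bigcup_j \Gamma_j$. The identification of each $\Gamma_j$ with a connected component of a level set of $H$ is now just Claim \ref{cc}: a Lipschitz parametrization $\gamma:[0,1]\to \Gamma_j$ combined with Lemma \ref{collinear2} yields $\nabla H(\gamma(t))\cdot \gamma'(t)=0$ for almost every $t$, hence $H\equiv c_j$ on $\Gamma_j$; Lemma \ref{connectedcurve} applied to the connected component of $\{H=c_j\}\cap B_{R'}$ containing $\Gamma_j$ promotes this inclusion to an equality.

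Finally, for the finiteness $N<+\infty$, I would argue as in Claim \ref{finite}. Since the preceding lemma gives $\mathcal{H}^1(\partial_\star B_{R'}^+\setminus \partial B_{R'})<+\infty$, the coarea formula yields
\[
\int_0^{R'} \mathcal{H}^0\Big(\bigcup_j \Gamma_j \cap C_t\Big)\,dt \leq \mathcal{H}^1\Big(\bigcup_j \Gamma_j\Big) <+\infty,
\]
so $\mathcal{H}^0(\bigcup_j \Gamma_j \cap C_t)$ is finite for almost every $t$. By shrinking $R'$ so that $z_0$ is the only critical point of $H$ in $B_{R'}$, the maximum principle forces every connected component of $\{H=c_j\}\cap B_{R'}$ with $c_j\neq 0$ to meet $\partial B_{R'}$ and therefore every circle $C_t$, contributing at least two points to the intersection; for $c_j=0$, Theorem \ref{courbes harmoniques} gives exactly finitely many branches through $z_0$. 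If $N$ were infinite, each of these curves would contribute at least two crossings with $C_t$ for small $t$, forcing $\mathcal{H}^0(\bigcup_j \Gamma_j \cap C_t)=+\infty$ for almost every small $t$, in contradiction with the coarea bound.

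The main obstacle, relative to Section~3, is precisely that $\nabla H(z_0)=0$: the zero level set of $H$ is no longer a smooth arc through $z_0$ but a bouquet of several analytic branches, and the crucial input that rescues the argument is Theorem \ref{courbes harmoniques}, which bounds the number of such branches. Everything else is a direct transcription of the arguments developed in Section~3 combined with the annular analysis (Lemmas \ref{jordancurvesannulus}--\ref{indN}) that was already needed to establish $\theta \in BV(B_R)$.
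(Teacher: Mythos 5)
Your proof is correct and follows essentially the same route as the paper, which at this point simply asserts that the claim ``results from an adaptation of'' Lemmas \ref{support}, \ref{perimetrefini}, \ref{simplecurves1} and Claims \ref{cc}, \ref{finite} once $\theta\in BV(B_R)$ is known; you carry out exactly that adaptation. The only place you go beyond the paper's one-sentence argument is in making explicit how the finiteness count survives the fact that $\nabla H(z_0)=0$, namely that the zero level set is a finite bouquet of analytic branches controlled by Theorem \ref{courbes harmoniques} --- a detail the paper leaves implicit but which is consistent with its intent.
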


Once we know that the function $\theta$ defined by \eqref{deftheta} is in $BV(B_R)$ for $R$ small enough we can apply the same arguments as in the previous section to prove this claim, it results from an adaptation of Lemmas \ref{support}, \ref{perimetrefini}, \ref{simplecurves1}, and Claims \ref{cc}, \ref{finite}.\\

We can now conclude the proof of Theorem \ref{theorem2}. The last claim proves that there exist a finite number of $\Gamma_j$ near $Z_0:=\{z \in B ; H(z)=0 \}$. Thus there exists $\eta >0$ such that $\dist (Z_0,\Gamma_i)> \eta$ for all $j\in \mathbb{N}$ such that $\Gamma_j$ is not included in $Z_0$. We then set $V:= B(z_0,\frac{\eta}{2})$. Because of the definition of $\eta$ we can say that 
$$\supp (\mu_{\lfloor V})\subset \{z \in ; H(z)=0 \}. $$

\noindent We also set as usual $V^+ =\{z\in V ;\theta(z)=+1\}$ ,$V^{-}=\{z\in V ;\theta(z)=-1\}$. We have that 
$$\nabla h =+\nabla H, \ \ \text{on} \ V^+, \ \ \nabla h =-\nabla H, \ \ \text{on} \ V^-.$$

\noindent Note that in $V$ the function $\theta H$ is continuous since $H=0$ at the discontinuity points of $\theta$. Then $\theta H$ is in $H^1(V)$ since $H$ is in $H^1$. Computing $\nabla (\theta H)$ in the sense of distributions we  obtain that $\nabla (\theta H)=\theta \nabla H$. Besides it comes 
$$\nabla (h-\theta H )=0 \ \text{in} \ V.$$
This proves that $h-\theta H$ is constant in $V$, but evaluating this constant in $z_0$ we find that 
$$h=\theta H, \ \text{in} \ V.$$
This concludes the proof of Theorem \ref{theorem2}.

\end{proof}
\section{Third case: local behavior near a zero of odd order of $\omega_{\h}(z)=(\partial_x\h-i\partial_y\h)^2(z)$}

In this section we deal with the case where $z_0$ is a point in the support of $\mu$ and 
$z_0$ is a zero of odd order of  $\omega_{\h}$. This case is the most difficult. Indeed unlike the previous 
cases we can not find a holomorphic function $g$ such that $(\partial _x \h -i 
\partial_y \h)^2 =g^2$. We must use \textit{multivalued} holomorphic function to 
overcome this difficulty. We do not want to discuss here the notion of multivalued 
function. For us the prototype of multivalued function is $z \mapsto z^{\frac{1}{2}}$. 
Such a multivalued function is single-valued up to a sign. Indeed given any complex 
number $z$ different from $0$ there exist exactly two complex numbers $z_1$ and $z_2$ such 
that $z_i^2=z$ for $i=1,2$ and $z_1=-z_2$. In particular $|z^{\frac{1}{2}}|=|z|
^{\frac{1}{2}}$ is well defined. We drop the subscript $\mu$ during the rest of this section. \\

\begin{lemma}\label{starting point3}
Let $h$ which satisfies \eqref{C1}, \eqref{C2}, \eqref{C3}. Let $z_0\in \Omega$ be a zero of odd order of $\omega_h(z)=(\partial_xh-i\partial_yh)^2(z)$. Then there exist $W$ a neighborhood of $z_0$, a function $\theta: W \rightarrow \{ \pm 1\}$ and a function $H:W\rightarrow \R$ which satisfies $H=|H_1|$, where $H_1$ is a multivalued function $W$ such that 
\begin{equation}\label{ImP3}
\partial_xh(z) -i \partial_y h(z)= \theta(z)(\partial_xH(z)-i\partial_yH(z)) \ \text{in} \ W.
\end{equation}
Furthermore the function $H_1$ is such that: there exist an unique integer $n \geq 1$, a small number $r >0$ and a biholomorphism $\Phi :B(0,r) \rightarrow W$ such that $\Phi(0)=z_0$ and 
\begin{equation}
H_1 \circ \Phi (z)= \text{Re} (z^{n+\frac{1}{2}}), \ \ \text{for} \ z \in B(0,r)
\end{equation}
\end{lemma}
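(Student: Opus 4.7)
The strategy parallels Lemma \ref{starting point2} but with the critical twist that $\omega_h$, having a zero of odd order, does not admit a single-valued holomorphic square root near $z_0$. Write the order of $\omega_h$ at $z_0$ as $2n-1$ with a unique $n \geq 1$. On a small ball $B(z_0, R)$, factor
$$\omega_h(z) = (z - z_0)^{2n-1} f(z),$$
with $f$ holomorphic and $f(z_0) \neq 0$. Since $f$ does not vanish, after shrinking $R$ I pick a holomorphic square root $\psi$ of $f$ and introduce the multivalued holomorphic function
$$g(z) := (z-z_0)^{n-1/2} \psi(z),$$
whose square is $\omega_h$. On any simply connected subset of $B(z_0,R)\setminus\{z_0\}$ the two branches of $g$ differ only by sign, and the monodromy around $z_0$ is multiplication by $-1$.

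Next I introduce the multivalued primitive $F_1(z) := \int_{z_0}^z g(s)\,ds$. Expanding $\psi$ in a Taylor series and integrating termwise gives
$$F_1(z) = (z-z_0)^{n+1/2}\, G(z),$$
where $G$ is single-valued holomorphic with $G(z_0) = \psi(z_0)/(n+\tfrac12) \neq 0$. Its square
$$F_1^2(z) = (z-z_0)^{2n+1} G(z)^2$$
is therefore a genuine single-valued holomorphic function with a zero of exact order $2n+1$. To construct $\Phi$, choose a single-valued holomorphic $(2n+1)$-th root $K$ of $G^2$ (which exists after shrinking, since $G^2$ does not vanish on a simply connected neighborhood) and set
$$\Psi(z) := (z-z_0)\, K(z).$$
Then $\Psi(z_0)=0$ and $\Psi'(z_0) = K(z_0) \neq 0$, so by the inverse function theorem $\Psi$ is a biholomorphism from a neighborhood $W$ of $z_0$ onto some ball $B(0,r)$; set $\Phi := \Psi^{-1}$. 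Since $\Psi^{2n+1} = F_1^2$, one has $F_1 = \pm\Psi^{n+1/2}$ as multivalued functions, and after selecting the appropriate branch of the square root, $F_1 \circ \Phi(z) = z^{n+1/2}$, so $H_1 := \operatorname{Re} F_1$ satisfies $H_1 \circ \Phi(z) = \operatorname{Re}(z^{n+1/2})$.

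To finish, define $H(z) := |H_1(z)|$. Because the monodromy of $H_1$ around $z_0$ is a sign flip, $H$ is single-valued and continuous on $W$, vanishing at $z_0$. Away from $\{H_1 = 0\}\cup\{z_0\}$, $H$ is smooth and a branch-by-branch computation yields
$$\partial_x H - i\partial_y H = \operatorname{sgn}(H_1)\, g,$$
which is single-valued since the sign flips in $\operatorname{sgn}(H_1)$ exactly cancel those in $g$. The identity $(\partial_x h - i\partial_y h)^2 = \omega_h = g^2 = (\partial_x H - i\partial_y H)^2$ then forces the ratio $(\partial_x h - i\partial_y h)/(\partial_x H - i\partial_y H)$ to take values in $\{\pm 1\}$ pointwise. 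Defining $\theta$ as this ratio (extended arbitrarily on the exceptional set of measure zero) yields \eqref{ImP3}.

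The main obstacle is the careful bookkeeping of multivaluedness: I must ensure that the formally multivalued objects $g$, $F_1$, $H_1$ descend to well-defined single-valued ones upon squaring ($\omega_h$, $F_1^2$, $\Psi^{2n+1}$) or taking absolute values ($H = |H_1|$), and that the sign function $\theta$ is correctly defined despite the branch ambiguity. The essential collapse is that every monodromy appearing in the construction is multiplication by $\pm 1$, hence trivially disappears under any operation of degree two; this is what permits an \emph{intrinsic} biholomorphism $\Phi$ to extract the model form $\operatorname{Re}(z^{n+1/2})$ from the a priori unwieldy multivalued primitive $F_1$.
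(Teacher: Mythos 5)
Your proof is correct and follows essentially the same route as the paper: factor $\omega_h=(z-z_0)^{2n-1}f$ with $f(z_0)\neq 0$, take the multivalued square root $g$, integrate it termwise to obtain a primitive of the form $(z-z_0)^{n+\frac{1}{2}}G$ with $G$ nonvanishing, straighten by a biholomorphism built from a root of the nonvanishing factor, set $H=|H_1|$, and define $\theta$ as the sign of the ratio of the two gradients. (Incidentally, your indexing of the odd order as $2n-1$ reproduces the exponent $n+\frac{1}{2}$ of the statement exactly, whereas the paper's own proof writes the order as $2n+1$ and arrives at $\mathrm{Re}(z^{n+\frac{3}{2}})$, leaving the re-indexing implicit; your extraction of a single-valued $(2n+1)$-th root of $G^2$ is also a slightly cleaner way to avoid the paper's fractional root of $\varphi_2$.)
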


\begin{proof}

Let $z_0$ be a zero of odd order of $(\partial_x h-i\partial_y h)^2$. We can find a neighborhood $U$ of $z_0$, an integer $n$ and a holomorphic function $f_1:U \rightarrow \C$ with $f_1(z_0) \neq 0$ such that 
$$ (\partial_x h-i\partial_y h)^2=(z-z_0)^{2n+1}f_1(z) .$$

\noindent Since $f_1(z_0) \neq 0$, there exists a smaller neighborhood of $z_0$, still denoted by $U$ and a holomorphic function $\varphi_1:U \rightarrow \C$ such that $ \varphi_1 ^2(z)= f_1(z), \ \ \text{for} \ z\in U.$

\noindent We then set 
\begin{equation}\label{definitiong3}
g(z) =(z-z_0)^{n+\frac{1}{2}} \varphi_1(z), \ \ \text{for} \ z \in U.
\end{equation}
  
Like $z \mapsto z^{\frac{1}{2}}$, $g$ is a multi-valued function which is single-valued 
up to a sign. As in the previous sections we want to take a primitive of the function $g$. However the 
fact that $g$ is multivalued introduces a difficulty in this process. But we can show 
that we can choose a special form of a primitive of $g$.

\begin{claim}\label{defphi2}
There exist a neighborhood $U$ of $z_0$ and a single-valued holomorphic function $\varphi_2: U \rightarrow \C$ such that $\varphi_2(z)\neq 0$ for all $z\in  U$  and
$$G(z):= (z-z_0)^{n+\frac{3}{2}}\varphi_2(z), $$
satisfies $G'(z)=g(z)$, for all $z\in U$ (where $g$ is defined by \eqref{definitiong3}) .
\end{claim}

\begin{proof}
Let us assume that such a function $\varphi_2$ exists. We then have:
\begin{eqnarray}
G'(z)&=&[(z-z_0)^{n+\frac{3}{2}}\varphi_2(z)]' \nonumber \\
&=& (n+\frac{3}{2})(z-z_0)^{n+\frac{1}{2}}\varphi_2(z) + (z-z_0)^{n+\frac{3}{2}}\varphi_2'(z) \nonumber
\end{eqnarray}
Since we want $G'(z)=g(z)=(z-z_0)^{n+\frac{1}{2}}\varphi_1(z)$, the function $\varphi_2$ must satisfies the following complex ordinary differential equation:

\begin{equation}\label{equadiff}
(n+\frac{3}{2})(z-z_0)^{n+\frac{1}{2}}\varphi_2(z) + (z-z_0)^{n+\frac{3}{2}}\varphi_2'(z)=(z-z_0)^{n+\frac{1}{2}}\varphi_1(z).
\end{equation}
In a neighborhood of $z_0$ we can expand $\varphi_1$ in power series 
$$\varphi_1(z)=\sum_{k=0}^{+\infty}a_k(z-z_0)^k.$$
Thanks to an expansion in power series we have: $\varphi_2(z)=\sum_{k=0}^{+\infty} b_k(z-z_0)^k$. Using \eqref{equadiff} we find that the coefficient $b_k$ must satisfy

\begin{equation}\label{equadiffcoeff}
(n+\frac{3}{2})\sum_{k=0}^{+\infty} b_k(z-z_0)^k + \sum_{k=1}^{+\infty}kb_k(z-z_0)^k = \sum_{k=0}^{+\infty} a_k (z-z_0)^k.
\end{equation}

\noindent Thus we must have 
\begin{itemize}
\item[*] for $k=0$: $b_0=\frac{a_0}{n+\frac{3}{2}}$
\item[*] for $k\geq 1$: $b_k=\frac{a_k}{n+\frac{3}{2}+k}$.
\end{itemize}

\noindent We can check that if we set $\varphi_2(z)= \displaystyle{\sum_{k=0}^{+\infty} \frac{a_k}{n+3/2+k}(z-z_0)^k}$ then $G(z)=(z-z_0)^{n+\frac{3}{2}} \varphi_2(z)$ is a primitive of $g$. furthermore because $\varphi_(z_0)\neq 0$ we have $a_0\neq 0$ and hence $\varphi_2(z_0)\neq 0$. Thus $\varphi_2(z) \neq 0$ in a neighborhood of $z_0$ denoted by $U$.
\end{proof}

We then set $$H_1(z)= \text{Re}(G(z))$$ and 
\begin{equation}\label{defH3}
H(z)=|H_1(z)| \ \text{for all} \ z \ \text{in} \ U.
\end{equation}
 Note that $H_1$ is a multi-valued function which is single-valued up to a sign. The proof of the next claim is very similar to an analogous result for harmonic function (see \textit{e.g.} \cite{fonctionharmonique}).

\begin{claim}\label{levelcurvesmulti}
There exist a neighborhood $W$ of $z_0$, a number $r>0$ and an analytic diffeomorphism $\Phi:B(0,r) \rightarrow W$ such that $\Phi(0)=z_0$ and 
$$H_1 \circ \Phi(z) =\text{Re} (z^{n+\frac{3}{2}}),$$
for all $z\in B(0,r)$.
\end{claim}

\begin{proof}
We have $H_1(z)=\text{Re}(G(z))=\text{Re} [(z-z_0)^{n+\frac{3}{2}}\varphi_2(z)]$, for all $z \in U$ and $\varphi_2(z_0)\neq 0$ (where $U$ and $\varphi_2$ are given by Lemma \ref{defphi2}). This last property allow us to find a neighborhood of $z_0$, denoted by $W$, and a (single-valued) function $\varphi_3:U \rightarrow \C$ such that 
$\varphi_3(z)^{n+\frac{3}{2}}=\varphi_2(z)$ for all $z\in W$. We thus obtain that 
$$G(z)= [(z-z_0)\varphi_3(z)]^{n+\frac{3}{2}}. $$
Note also that $\varphi_3(z_0)\neq 0$. We let $k(z)=(z-z_0)\varphi_3(z)$. We have that $k$ is holomorphic, $k(z_0)=0$ and $k'(z_0)\neq 0$. We can thus apply an analytic version of the local inverse theorem to obtain that there exists a neighborhood of $z_0$, still denoted by $W$ and a number $r>0$ such that $k: W \rightarrow B(0,r)$ is an analytic diffeomorphism (or biholomorphism). Now we set $\Phi=k^{-1}$, we have that 
$k\circ \Phi(z)=z \in B(0,r)$, $\Phi(0)=z_0$ and $$G\circ \Phi (z)=[k(\Phi (z))]^{n+\frac{3}{2}}=z^{n+\frac{3}{2}}.$$
We then deduce that 
$$H_1\circ \Phi (z)=\text{Re}(G \circ \Phi(z))=\text{Re}(z^{n+\frac{3}{2}})$$
for all $z\in B(0,r)$ and the claim is proved. 
\end{proof}

One can check that 
\begin{eqnarray}
(\partial_xH -i\partial_yH)^2 =&(\partial_x {H_1}-i\partial_y {H_1})^ 2
= g(z)^2 
= f(z) \nonumber.
\end{eqnarray}
We thus deduce that there exists a function $\theta : W \rightarrow \{ \pm 1 \}$ such that 
\begin{equation}\label{important22}
(\partial_xh-i\partial_y h)=\theta (\partial_xH -i\partial_yH) \ \text{in} \ W.
\end{equation}
\end{proof}
\noindent We then set 
$$
W^+ = \{z \in W ; \theta (z) =+1 \}, \ \ 
W^- = \{z \in W ; \theta (z)=-1 \}. 
$$
Note that the function $\theta$ does not play the same role as in the previous section. 
This is because the function $H$ is not harmonic here. Furthermore $H$ is only lipschitz and not smooth thus we can not use the same argument as in the previous section to prove that $\theta$ is in $BV_{loc}(W \setminus \{z_0\})$. Indeed to prove this we used the fact that 
$$\theta(z)=\frac{\partial_xh-i\partial_yh}{\partial_xH-i\partial_yH}, \ \forall \ z \in W \setminus \{z_0\}$$
and we differentiated this expression in the sense of distributions, using the Leibniz rule. We can not do the same here since $\partial_xH-i\partial_yH$ is not a smooth function. \\

For this reason we work in $W \setminus \{ z \in U ; H(z)=0\}$. Thanks to Proposition \ref{levelcurvesmulti} we 
know that this set is an union of $2n+3$ connected disjoint open sets (where $n$ is defined in 
\ref{levelcurvesmulti}). We have 
\begin{equation}
W \setminus \{z \in W ; H(z)=0\} =\bigcup _{k=1}^{2n+3} W_k
\end{equation}
\noindent with $W_k$ connected and open and such that $W_k \cap W_j = \emptyset$ if $k\neq j$.
In each $W_k$, $H$ does not vanish and $(\partial_xH-i\partial_yH)$ does not vanish either. We can then find a harmonic single valued function $\tilde{H}_k$ such that 
$$|H|=\tilde{H}_k, \ in \ W_k.$$

\begin{figure}[ht!]
\begin{center}
\scalebox{0.5}{\input{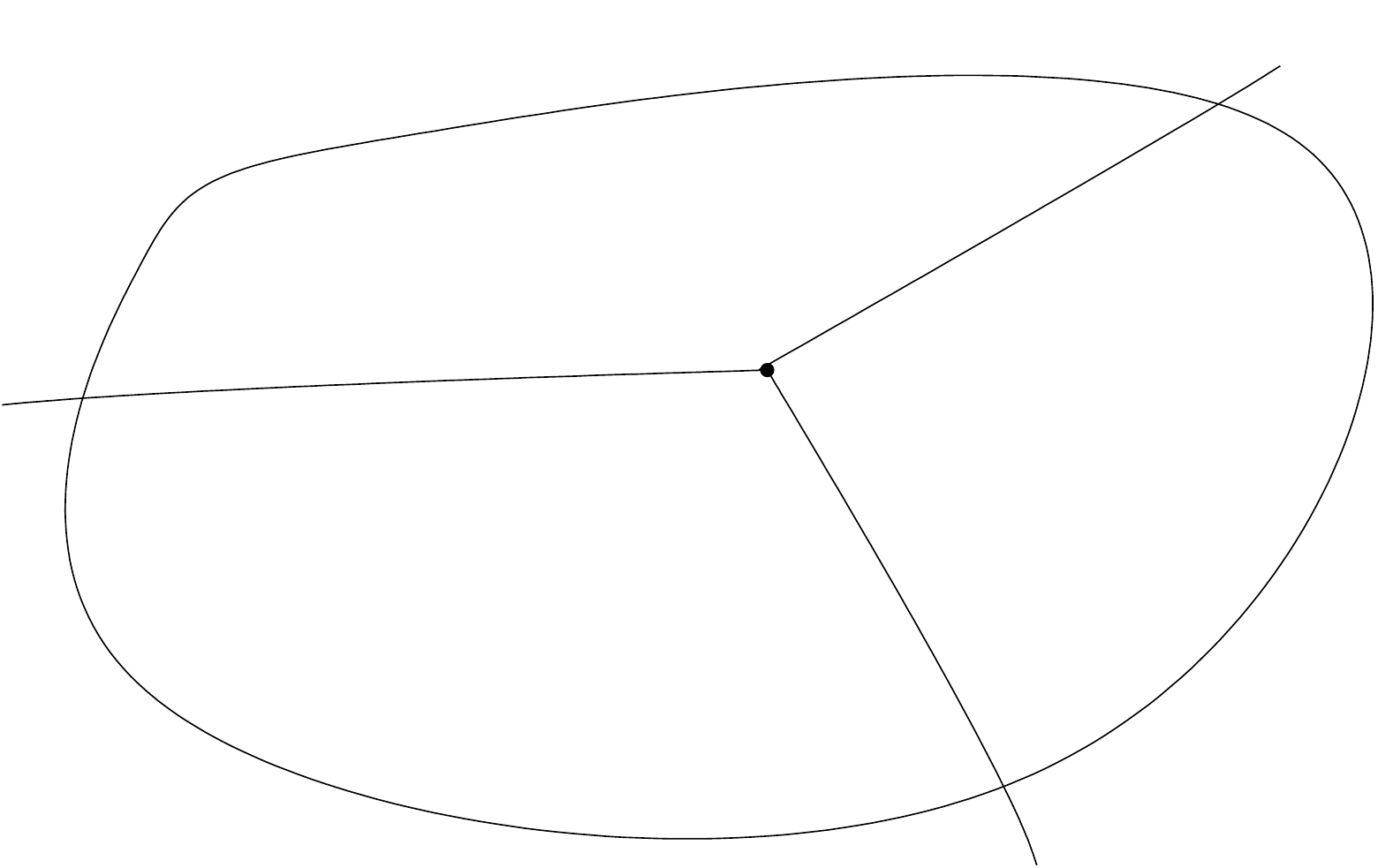_t}}
\end{center}
\caption{Partition of $W$ in disjoint open connected subsets.}
\end{figure}
\noindent In particular $|H| \in \mathcal{C}^\infty(W_k)$ for all $1\leq k \leq 2n+3$. We are now in position to state that

\begin{claim}
The function $\theta$ is in $BV_{loc}(W_k \setminus \{z_0\})$ for all $1\leq k \leq 2n+3$.
\end{claim}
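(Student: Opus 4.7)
The plan is to reduce the situation on each $W_k$ to the one already treated in Lemma \ref{lemma1} / Lemma \ref{lemma2.1}, by replacing the multivalued $H_1$ (resp.\ the merely Lipschitz function $H=|H_1|$) by a single\nobreakdash-valued \emph{harmonic} branch on $W_k$. Concretely, since $W_k$ is a connected component of $W\setminus\{H=0\}$, the multivalued function $H_1$ can be single\nobreakdash-valued on $W_k$: picking either branch of $H_1$ we obtain a harmonic function $\tilde H_k:W_k\to\R$ which is continuous and does not vanish, hence has constant sign; choosing the branch with the correct sign we arrange $\tilde H_k=H$ on $W_k$. The derivative $g_k:=\partial_x\tilde H_k-i\partial_y\tilde H_k$ is then a single\nobreakdash-valued holomorphic function on $W_k$. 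Moreover, by the explicit formula from Lemma \ref{starting point3} and Claim \ref{defphi2}, $g_k$ extends across $z_0$ as $(z-z_0)^{n+\frac12}\varphi_1(z)$ (on the branch chosen on $W_k$), so the only possible zero of $g_k$ is $z_0$ itself; in particular $g_k$ is smooth and bounded away from $0$ on every compact subset $K\subset\subset W_k\setminus\{z_0\}$.

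Once this branch is set, relation \eqref{important22} reads, on $W_k$,
\begin{equation}
\partial_x h-i\partial_y h \;=\; \theta\,g_k,\qquad\text{so}\qquad \theta \;=\; \frac{\partial_x h-i\partial_y h}{g_k}\quad\text{in } W_k\setminus\{z_0\}.\nonumber
\end{equation}
From here one copies verbatim the computation of Lemma \ref{lemma1}: $g_k$ being smooth and nonvanishing on $W_k\setminus\{z_0\}$, the Leibniz rule applies distributionally, and the holomorphy of $g_k$ (i.e.\ $\partial_x g_k+i\partial_y g_k=0$) causes the term involving derivatives of $g_k$ to collapse, yielding
\begin{equation}
\partial_x\theta+i\partial_y\theta \;=\; \frac{\Delta h}{g_k} \;=\; \frac{\mu}{g_k}\qquad\text{in }\mathcal D'(W_k\setminus\{z_0\}).\nonumber
\end{equation}
Taking real and imaginary parts gives $\partial_x\theta=\operatorname{Re}(1/g_k)\,\mu$ and $\partial_y\theta=\operatorname{Im}(1/g_k)\,\mu$.

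It only remains to estimate the total variation locally. Fix a compact $K\subset\subset W_k\setminus\{z_0\}$; then $M_K:=\|1/g_k\|_{L^\infty(K)}<\infty$. For any $\varphi\in\mathcal C^1_c(\operatorname{int}K,\R^2)$ with $|\varphi|\le 1$,
\begin{equation}
\Bigl|\int_{K}\theta\,\operatorname{div}\varphi\,\Bigr|
\;=\;\Bigl|\int_K \operatorname{Re}(1/g_k)\,\varphi_1\,d\mu+\int_K \operatorname{Im}(1/g_k)\,\varphi_2\,d\mu\Bigr|
\;\le\; 2\,M_K\,|\mu|(K),\nonumber
\end{equation}
which is finite because $\mu$ is a Radon measure and $K$ is compact. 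Combined with $\theta\in L^\infty\subset L^1_{loc}$, this is exactly the definition of $\theta\in BV(\operatorname{int}K)$, and as $K$ is arbitrary we conclude $\theta\in BV_{loc}(W_k\setminus\{z_0\})$.

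The main conceptual obstacle is the first paragraph: since $H=|H_1|$ is not harmonic (only Lipschitz) and $H_1$ is multivalued, the direct Leibniz argument of Lemma \ref{lemma1} fails globally in $W$; everything hinges on restricting to a component $W_k$ where $H_1$ can be realized as a single\nobreakdash-valued harmonic function coinciding with $H$, and on knowing via the explicit form of $g$ from Claim \ref{defphi2} that the resulting holomorphic denominator $g_k$ is nonvanishing away from $z_0$. Once that branch selection is made, the remaining estimates are the same distributional computation as in the first two sections.
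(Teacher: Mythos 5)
Your proof is correct and takes essentially the same route as the paper: the paper also reduces to a single-valued harmonic branch $\tilde H_k$ with $\tilde H_k=H$ on each $W_k$ (this is set up in the paragraph preceding the claim) and then invokes the computation of Lemmas \ref{lemma1} and \ref{lemma2.1} verbatim, which is exactly the distributional Leibniz argument and $BV$ estimate you carry out.
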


The proof of this fact is the same as the proofs of Lemma \ref{lemma1} and \ref{lemma2.1} in the previous sections. 
For $1 \leq k \leq 2n+3$ we set 
$$W_k^+ =\{ z \in W_k ; \theta(z)=+1\} .$$  
These sets are sets of locally finite perimeter in $W_k$. As in the previous sections (see Lemma \ref{colinear}) we can obtain 

\begin{claim}
The generalized outer normal $\nu_{U_k^+}$ is collinear to $\nabla h$ and $\nabla H$, $\|\partial U_k^+\|$- almost everywhere in $U_k$.
\end{claim}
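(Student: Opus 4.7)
The plan is to adapt the proof of Lemma \ref{colinear} verbatim to the present local setting. The key observation is that on each connected component $W_k$ of $W \setminus \{H = 0\}$, the function $H = |H_1|$ coincides (by choosing the appropriate branch of the multi-valued $H_1$, since $H_1$ is single-valued up to a sign and does not vanish on $W_k$) with a single-valued harmonic function $\tilde H_k$. In particular $H \in C^\infty(W_k)$, $H$ is harmonic in $W_k$, and $\nabla H$ does not vanish on $W_k$. This brings us back precisely to the hypotheses used in Lemma \ref{colinear}.

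Starting from \eqref{important22}, i.e.
\begin{equation*}
\partial_x h - i \partial_y h = \theta\,(\partial_x H - i \partial_y H) \quad \text{in } W,
\end{equation*}
I first use the distributional Schwarz identity $\partial_{xy} h = \partial_{yx} h$ in $W_k$, which becomes $\partial_y(\theta \partial_x H) = \partial_x(\theta \partial_y H)$. Since $H \in C^\infty(W_k)$, the Leibniz rule applies to $\theta \, \partial_i H$, and $\partial_{xy}H = \partial_{yx}H$ by smoothness; these two facts combine to give
\begin{equation*}
\partial_y \theta \cdot \partial_x H \;=\; \partial_x \theta \cdot \partial_y H \qquad \text{in } \mathcal{D}'(W_k).
\end{equation*}

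Next, for an arbitrary test function $\varphi \in C_c^\infty(W_k, \mathbb{R})$, set $\psi := (-\partial_y H \cdot \varphi,\; \partial_x H \cdot \varphi)$. Since $H$ is smooth, $\psi$ is a legitimate test vector field, and the previous identity rewrites as $\int_{W_k} \theta \,\operatorname{div} \psi = 0$, i.e.
\begin{equation*}
\int_{W_k^+} \operatorname{div} \psi \;-\; \int_{W_k^-} \operatorname{div} \psi \;=\; 0.
\end{equation*}
The sets $W_k^\pm$ have locally finite perimeter in $W_k$ by the preceding claim, so I can apply the Gauss--Green formula (Theorem \ref{Gauss-Green}); using $\partial_\star W_k^+ = \partial_\star W_k^-$ and $\nu_{W_k^+} = -\nu_{W_k^-}$ (since $W_k^- = W_k \setminus W_k^+$), the boundary terms sum to
\begin{equation*}
2 \int_{\partial_\star W_k^+ \cap W_k} \bigl( -\partial_y H\, \nu_{W_k^+}^1 + \partial_x H\, \nu_{W_k^+}^2 \bigr)\, \varphi \, d\mathcal{H}^1 \;=\; 0.
\end{equation*}

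As $\varphi$ is arbitrary, the integrand vanishes $\mathcal{H}^1$-almost everywhere on $\partial_\star W_k^+ \cap W_k$, which says that $\nu_{W_k^+}$ is orthogonal to $(-\partial_y H, \partial_x H)$ and therefore collinear to $\nabla H = (\partial_x H, \partial_y H)$. Collinearity with $\nabla h$ then follows from \eqref{important22}, which reads $\nabla h = \theta \,\nabla H$ with $\theta \in \{\pm 1\}$. I expect no real obstacle here: once $H$ is identified with a smooth non-vanishing harmonic function on each $W_k$, the argument of Lemma \ref{colinear} transfers directly; the only care needed is to stay away from $z_0$ and from $\{H=0\}$, which is exactly the domain $W_k$ we are working in.
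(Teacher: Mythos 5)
Your proof is correct and follows essentially the same route as the paper, which simply invokes Lemma \ref{colinear} after noting (as you do) that on each $W_k$ the function $H=|H_1|$ agrees with a single-valued harmonic function $\tilde H_k$ with non-vanishing gradient, so that the smooth-harmonic argument applies verbatim. The identification of $\mathcal{H}^1$-a.e.\ on $\partial_\star W_k^+$ with $\|\partial W_k^+\|$-a.e.\ is justified by the structure theorem ($\|\partial E\|=\mathcal{H}^1_{\lfloor \partial^\star E}$ and $\mathcal{H}^1(\partial_\star E\setminus\partial^\star E)=0$), so nothing is missing.
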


We have all the ingredients to repeat the arguments of Sections 3 and 4 in each sub-domains $W_k$ and obtain

\begin{claim}\label{previousprop}
For all $1\leq k \leq 2n+3$, there exist $N_k \in \mathbb{N}$, and $N_k$ simple connected rectifiable curves $\Gamma_j^k$, $1 \leq j \leq 2n+3$ such that 
$$\partial_\star W_k^+ \setminus \partial W =\bigcup_{j=1}^{N_k} \Gamma_j^k.$$
Furthermore there exist $c_j^k$ real numbers such that $\Gamma_j^k $ is exactly a connected component of $\{ z \in U ; H(z)= c_j^k \}$.
\end{claim}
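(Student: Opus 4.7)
The plan is to transfer the arguments of Sections 3 and 4 into each subdomain $W_k$, exploiting the fact that on $W_k$ the multivalued function $H$ coincides with a single-valued harmonic function $\tilde{H}_k$ whose gradient does not vanish: indeed, $g(z) = (z-z_0)^{n+1/2}\varphi_1(z)$ has its only zero at $z_0 \in \partial W_k$, not inside $W_k$. Relation \eqref{important22} then specializes on $W_k$ to $\partial_x h - i\partial_y h = \theta(\partial_x \tilde{H}_k - i\partial_y \tilde{H}_k)$, putting us exactly in the situation of Section 3 inside $W_k$, modulo one complication: $z_0$ lies on $\partial W_k$ rather than in its interior.

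To handle that boundary singularity I would follow the annular scheme of Section 4. Fix small $R>0$ and work on truncated regions $W_k^\rho := W_k \cap B_R \setminus \overline{B(z_0,\rho)}$ for $0 < \rho < R$. On such a region $\tilde{H}_k$ is smooth with $\nabla \tilde{H}_k \neq 0$, and $\theta \in BV(W_k^\rho)$ by the preceding claim. The analogs of Lemma \ref{support2} and of the collinearity Lemma \ref{colinear} apply verbatim, giving
\[
\supp(\mu_{\lfloor W_k^\rho}) = \partial_\star (W_k^+ \cap W_k^\rho) \setminus \partial W_k^\rho,
\]
with the generalized normal collinear to $\nabla \tilde{H}_k$ along this essential boundary. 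Applying Theorem \ref{ACMM} decomposes $\partial_\star (W_k^+ \cap W_k^\rho)$ into countably many rectifiable Jordan curves, and the coarea argument of Lemma \ref{simplecurvesannulus} cuts these into finitely many connected rectifiable simple curves $\Gamma_j^{k,\rho}$ with endpoints on $\partial W_k^\rho$. The integration-along-the-curve argument of Claim \ref{cc}, combined with the collinearity, forces each $\Gamma_j^{k,\rho}$ to be a connected component of a level set $\{\tilde{H}_k = c_j^{k,\rho}\} \cap W_k^\rho$.

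It remains to pass $\rho \to 0$, and this is where the stability Lemma \ref{indN} takes over: for $\rho_1 < \rho_2$ small enough, the level values $\{c_j^{k,\rho_1}\}$ coincide with $\{c_j^{k,\rho_2}\}$ and the curves are nested, so that the number $N_k$ of curves is eventually independent of $\rho$. One then sets $c_j^k := c_j^{k,\rho}$ and defines $\Gamma_j^k := \bigcup_\rho \Gamma_j^{k,\rho}$, which is the connected component of $\{\tilde{H}_k = c_j^k\} \cap W_k$ that accumulates at $z_0$. Finiteness of $\mathcal{H}^1(\partial_\star W_k^+ \setminus \partial W_k)$ follows from the bound $\sum_{j=1}^{N_k} \mathcal{H}^1(\{\tilde{H}_k = c_j^k\} \cap B_R) < \infty$, valid for $R$ small since $\tilde{H}_k$ is smooth with non-vanishing gradient on $W_k$. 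The principal obstacle I foresee is justifying the maximum-principle step that underlies Lemma \ref{indN} in this setting: one must verify that every level curve of $\tilde{H}_k$ inside $W_k \cap B_R$ meets the portion of $\partial W_k$ lying outside $\overline{B(z_0,\rho)}$, which is delicate because $\partial W_k$ consists precisely of arcs of $\{H=0\}$ meeting at $z_0$, so the "outer'' piece of this boundary must be shown to remain substantial as $\rho \to 0$. This is ensured by the explicit model $H_1 \circ \Phi(z) = \mathrm{Re}(z^{n+3/2})$ from Claim \ref{levelcurvesmulti}, which makes the sector geometry of each $W_k$ fully explicit.
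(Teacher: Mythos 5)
Your proposal matches the paper's treatment: the paper disposes of this claim in a single line by asserting that the arguments of Sections 3 and 4 can be repeated in each sub-domain $W_k$, and your write-up is precisely that transfer --- the single-valued harmonic $\tilde H_k$ on each sector, the annular truncation around $z_0$ borrowed from Section 4, the ACMM decomposition plus coarea slicing, the level-set identification, and the stability argument to pass $\rho \to 0$. Your closing remark about verifying the maximum-principle step on the sector geometry is a point the paper glosses over entirely, and your resolution via the explicit model $H_1\circ\Phi(z)=\mathrm{Re}(z^{n+3/2})$ is the right way to justify it.
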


\begin{proof}[Proof of Theorem \ref{theorem3}]

Let $h$ which satisfies \eqref{C1}, \eqref{C2}, \eqref{C3}. Let $W$, $\theta$, $H$ be defined by \eqref{ImP3} in Lemma \ref{starting point3}. As before we set 
$$W \setminus \{z\in W ;H(z)= 0\} =\bigcup_{k=1}^{2n+3} W_k.$$
with $W_k$ open and connected and $W_k \cap W_j = \emptyset$ if $k\neq j$. We also set $W_k^+=\{z \in W_k; \theta(z)=+1\}$.  We use the previous Claim \ref{previousprop}
and we obtain that for all $1\leq k \leq 2n+3$, since there are only a finite number of curves $\Gamma_j^k$ such that $\partial_\star W_k^+ \setminus \partial W =\bigcup_{j=1}^{N_k} \Gamma_j^k$, with the $\Gamma_j^k$ which are connected component of  level curves of $H$ then we can find $\eta_k$ such that $$B(z_0,\eta_k) \cap \partial_\star W_k^+ \setminus \partial W \subset \{z \in W ; H(z)=0 \}.$$ We then set $\eta:= \min_{1\leq k \leq 2n+3} \eta_k$ and $V:= B(z_0,\eta)$. We have that $\theta$ is constant in each $V \cap W_k$ since $\theta$ is constant in each $B(z_0,\eta_k) \cap W_k$ from the definition of $\eta_k$. \\

We claim that $V^+=\{z\in V ; \theta (z)=+1\}$ is a set of finite perimeter in $V$. Indeed we have that   $\partial V^+ \setminus \partial V \subset \{z\in V; H(z)=0\}$ (here we use the topological boundary $\partial V^+$) and $\mathcal{H}^1 (\{z\in V; H(z)=0\}) <+\infty$ from the last point of Lemma \ref{starting point3}. Then applying Proposition 3.62 of \cite{AmbrosioFuscoPallara} we deduce that $V^+$ is a set of finite perimeter. \\

Note that in $V$ the function $\theta H$ is continuous since $H=0$ at the discontinuity points of $\theta$. Then $\theta H$ is in $H^1(V)$ since $H$ is in $H^1$. Computing $\nabla (\theta H)$ in the sense of distributions we  obtain that $\nabla (\theta H)=\theta \nabla H$ and it comes 
$$\nabla (h-\theta H )=0 \ \text{in} \ V.$$
this proves that $h-\theta H$ is constant in $V$, but evaluating this constant in $z_0$ we find that 
$$h=\theta H \ \text{in} \ V.$$

 %Since there exists a finite number of $\Gamma_j$ in $B_{R'}$, we can choose $\eta >0$ such that $\dist(\Gamma_j,Z_0) >\eta$ for all $j=1,...,N$ such that $\Gamma_j$ is not included in $Z_0:=\{z\in B; H(z)=0\}$. We can set $V:=B(z_0,\frac{\eta}{2})$, $V^+=\{z\in V; \theta(z)=+1\}$, $V^-=\{z\in V; \theta(z)=-1\}$. Because of the definition of $\eta$ we have that 
%$$\supp(D\theta _{\lfloor V}) =\partial_\star V^+ \setminus \partial V \subset \{z \in V ;H(z)=0 \}.$$

%We also have 
%\begin{eqnarray}
%\nabla h &=& +\nabla H, \  \text{in} \ V^+ \nonumber \\
%\nabla h &=& -\nabla H \ \text{in} \ V^- .\nonumber
%\end{eqnarray}

%. We thus find differentiating in the sense of distributions that 
%$$\nabla (h-\theta H )=0, \text{in} \ V. $$
%This proves that $h-\theta H$ is constant in $V$, but evaluating this constant in $z_0$ we find that 
%$$h=\theta H, \ \text{in} \ V.$$

\end{proof}

\section{Appendix: On the stationary harmonic functions}

This appendix is devoted to elementary results on stationary harmonic functions. These results are stated without proof in the introduction of this paper. The original definition of stationary harmonic function is the following:

\begin{definition}
A function $h$ in $H^1(\Omega)$ is \textit{stationary harmonic} if for any family of diffeomorphisms $\phi_t$ of $\Omega$ such that $\phi_0=\text{Id}$ we have 
\begin{equation}\nonumber
\frac{d}{dt}|_{t=0} E(h \circ \phi_t)=0,
\end{equation}
\noindent where $E(h)=\frac{1}{2}\int_\Omega |\nabla h|^2dx$ is the Dirichlet energy.
\end{definition}

\noindent As shown by the following proposition we used an equivalent characterization.
\begin{proposition}
A function $h$ is stationary harmonic if and only if $\dive T_h=0$ in the sense of distributions, where 
$$T_h=\begin{pmatrix}
\frac{1}{2}\left[(\partial_y h) ^2-(\partial_x h)^2\right] & -\partial_x h \partial_y h \\
-\partial_x h \partial_y h & \frac{1}{2}\left[(\partial_x h) ^2-(\partial_y h)^2\right].
\end{pmatrix} $$
\end{proposition}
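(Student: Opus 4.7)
My approach is to perform the classical inner-variation calculation for the Dirichlet energy and to recognize the resulting distributional pairing as $-\langle\dive T_h,X\rangle$ tested against an arbitrary compactly supported smooth vector field $X$.

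First I would reduce the admissible families in the definition of stationary harmonic to flows of compactly supported smooth vector fields. Given $X\in C^\infty_c(\Omega,\R^2)$ with flow $\phi_t$, one has $\phi_t\in C^\infty(\Omega)$, $\phi_t$ is a diffeomorphism of $\Omega$ equal to $\mathrm{Id}$ outside a compact set, $\phi_0=\mathrm{Id}$, and $\partial_t|_{t=0}\phi_t=X$. Since the first variation $\partial_t|_{t=0}E(h\circ\phi_t)$ depends only on the first-order generator $X=\partial_t|_{t=0}\phi_t$, it suffices to check the stationary condition along such flows.

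Next I would carry out the change of variables $y=\phi_t(x)$ in $E(h\circ\phi_t)=\tfrac12\int|D\phi_t^T(\nabla h)\circ\phi_t|^2\,dx$. Setting $B_t(y):=D\phi_t^{-1}(y)$ and using $D\phi_t(\phi_t^{-1}(y))=B_t(y)^{-1}$ one arrives at
$$E(h\circ\phi_t)=\tfrac12\int_\Omega \nabla h(y)^T B_t(y)^{-1} B_t(y)^{-T}\nabla h(y)\,|\det B_t(y)|\,dy.$$
Differentiating $\phi_t\circ\phi_t^{-1}=\mathrm{Id}$ at $t=0$ yields $\partial_t|_{t=0}B_t=-DX$, whence
$$\partial_t|_{t=0}\bigl(B_t^{-1}B_t^{-T}\bigr)=DX+DX^T,\qquad \partial_t|_{t=0}|\det B_t|=-\dive X.$$
Dominated convergence justifies passing the derivative under the integral (only $|\nabla h|^2\in L^1_{\mathrm{loc}}$ is needed, while the $t$-dependent factors are smooth and uniformly controlled on $\mathrm{supp}\,X$). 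Using the symmetry of $\nabla h\otimes\nabla h$ to merge the $DX$ and $DX^T$ contributions, one finds
$$\frac{d}{dt}\Big|_{t=0}E(h\circ\phi_t)=\int_\Omega\sum_{i,j}\Big[\partial_i h\,\partial_j h-\tfrac12|\nabla h|^2\delta_{ij}\Big]\partial_j X_i\,dy.$$

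Finally, direct inspection of \eqref{stress-energy} shows that the bracket above is exactly $-(T_h)_{ij}$, so the first variation equals $-\int_\Omega\sum_{i,j}(T_h)_{ij}\partial_j X_i\,dy$. This pairing vanishes for every $X\in C^\infty_c(\Omega,\R^2)$ if and only if the rows of $T_h$ are divergence-free in $\mathcal{D}'(\Omega)$, which is the definition of $\dive T_h=0$ in the sense of distributions; this is the claimed equivalence. No step is genuinely an obstacle: the two points that deserve care are the reduction to compactly supported $X$ (so that the behavior of general $\phi_t$ at $\partial\Omega$ drops out) and the justification of differentiation under the integral when $h$ is merely in $H^1$, both of which are routine since $\phi_t$ is smooth and coincides with the identity off a fixed compact subset of $\Omega$.
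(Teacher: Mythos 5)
Your proof is correct and follows essentially the same route as the paper's: an inner-variation computation with a change of variables and a first-order expansion of the Jacobian determinant, ending by identifying the resulting integrand with $-\langle T_h, DX\rangle$ so that stationarity is equivalent to $\dive T_h=0$ in $\mathcal{D}'(\Omega)$. The only cosmetic difference is that you use the flow of a compactly supported vector field $X$ where the paper uses the affine perturbation $\phi_t(x)=x+t\eta(x)$; both yield the same first-variation formula.
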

\begin{proof}
We first note that $\dive(T_h)=0$ in the sense of distributions if and only if 
$$\int_\Omega \langle T_h,D\eta \rangle =0, \ \ \ \forall \eta \in \mathcal{C}^{\infty}_c(\Omega,\R^2), $$
where $D\eta$ denotes the differential of $\eta$ (which is a $2\times 2$ matrix) and $\langle A,B \rangle=\tra(^tAB)$ denotes the inner product of two matrices. Let $\phi_t(x)=x+t\eta(x)$ with $\eta \in \mathcal{C}^\infty_c(\Omega,\R^2)$, if $t$ is small enough $\phi_t$ is a diffeomorphism. Let $h_t:=h\circ \phi_t$, we have 
$$\nabla h_t(x)= (I+tD\eta(x)).\nabla h(x+t\eta(x))$$
$$|\nabla h_t(x)|^2=|\nabla h(x+t\eta(x))|^2+2t\langle \nabla h(x+t\eta(x)), [D\eta(x).\nabla h(x+t\eta(x))]\rangle +o(t) $$
Then 
\begin{eqnarray}
\frac{1}{2} \int_{\Omega} |\nabla h_t(x)|^2dx& =&\frac{1}{2}\int_\Omega |\nabla h(x+t\eta(x))|^2dx + \nonumber \\ & +&t\int_\Omega \langle \nabla h(x+t\eta(x)), \left[D\eta(x).\nabla h(x+t\eta(x))\right]\rangle dx+o(t). \nonumber
\end{eqnarray}
We can make the following change of variables $y=x+t\eta(x) \Leftrightarrow x=y-t\eta(x) \Rightarrow x=y-t\eta(y)+o(t)$ (the last implication holds because $\eta(x)=\eta(y)+o(1)$ when $t$ goes to $0$). 
We also have 
$$\det\left[D(y-t\eta(y)+o(t))\right]=\det(I-D\eta +o(t))=1-t \tra D\eta +o(t)$$ because 
$\det(I+tA)=1+ t\tra(A)+o(t)$. Then 
\begin{eqnarray}
E(h_t)&=&\frac{1}{2}\int_\Omega |\nabla h(y)|^2dy-\frac{t}{2}\int_\Omega |\nabla h(y)|^2\tra D\eta(y)dy + \nonumber \\ &+&t\int_\Omega \langle \nabla h(y), \left[D\eta(y)\nabla h(y)\right] \rangle dy +o(t). \nonumber
\end{eqnarray}
Hence 
\begin{equation}
\frac{d}{dt}|_{|t=0}E(h_t)=0 \nonumber
\end{equation}
is equivalent to 
\begin{equation}
\int_\Omega[-\frac{1}{2}|\nabla h(y)|^2 \tra D\eta(y)+\langle \nabla h(y),[ D\eta (y).\nabla h(y)]\rangle dy=0. \nonumber
\end{equation}

\noindent But $$\frac{1}{2}|\nabla h(y)|^2\tra D\eta(y)=\langle \frac{1}{2}|\nabla h(y)|^2I,D\eta(y)\rangle$$ and \\
$$\langle \nabla h(y) , D\eta(y)\nabla h(y)\rangle =\langle \nabla h(y)^t\nabla h(y),D\eta(y)\rangle$$
with $\nabla h(y)^t\nabla h(y)= \begin{pmatrix}
(\partial_x h)^2 & \partial_xh \partial_yh \\
\partial_yh\partial_xh & (\partial_y h) ^2
\end{pmatrix}$.
We can then conclude that $$\frac{d}{dt}|_{|t=0}E(h_t)=0 \Leftrightarrow \int_\Omega <T_h,D\eta>=0$$ which is equivalent to $\dive(T_h)=0$ with $T_h=\nabla h ^t\nabla h-\frac{1}{2}|\nabla h|^2I$.
\end{proof}
\noindent The equation \eqref{2} can also be interpreted in terms of holomorphic functions

\begin{proposition}\label{Hopf}
The condition $\dive(T_h)=0$ is equivalent to $\omega_h:= |\partial_xh|^2-|\partial_y h|^2-2i\partial_xh\partial_y h$ is holomorphic in $\Omega$. 
\end{proposition}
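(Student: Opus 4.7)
The plan is to show that both conditions unwind to the exact same pair of distributional identities. Since $h\in H^1(\Omega)$, the products $(\partial_x h)^2$, $(\partial_y h)^2$, $\partial_x h\,\partial_y h$ all belong to $L^1_{\mathrm{loc}}(\Omega)$, so both the tensor $T_h$ and the complex function $\omega_h$ are well-defined elements of $L^1_{\mathrm{loc}}$, and hence distributions. This allows the comparison to be carried out without any smoothness assumption on $h$.

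First I would unwind $\dive(T_h)=0$ in the sense of distributions. Testing the two rows of $T_h$ against scalar test functions $\eta_1,\eta_2\in\mathcal{C}^\infty_c(\Omega)$ separately, the vanishing of the divergence is equivalent to the pair of identities
\begin{equation*}
\int_\Omega \tfrac{1}{2}\bigl[(\partial_y h)^2-(\partial_x h)^2\bigr]\partial_x\eta_1 - \partial_x h\,\partial_y h\,\partial_y\eta_1 = 0,
\end{equation*}
\begin{equation*}
\int_\Omega -\partial_x h\,\partial_y h\,\partial_x\eta_2 + \tfrac{1}{2}\bigl[(\partial_x h)^2-(\partial_y h)^2\bigr]\partial_y\eta_2 = 0,
\end{equation*}
for all such $\eta_1,\eta_2$.

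Next I would unwind the statement that $\omega_h$ is holomorphic. Writing $\omega_h=U-iV$ with $U=(\partial_x h)^2-(\partial_y h)^2$ and $V=2\partial_x h\,\partial_y h$, holomorphy means $\partial_{\bar z}\omega_h=0$ as a distribution, i.e.\ $\int_\Omega \omega_h\,\partial_{\bar z}\varphi =0$ for every $\varphi\in\mathcal{C}^\infty_c(\Omega,\mathbb{C})$. Since a complex test function is a $\mathbb{C}$-linear combination of two real ones, it suffices to test against real $\varphi$. Using $\partial_{\bar z}\varphi = \tfrac{1}{2}(\partial_x\varphi + i\partial_y\varphi)$ and splitting into real and imaginary parts, this collapses to the two real identities
\begin{equation*}
\int_\Omega U\partial_x\varphi + V\partial_y\varphi = 0, \qquad \int_\Omega U\partial_y\varphi - V\partial_x\varphi = 0.
\end{equation*}

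Finally, a direct inspection shows that the row-1 equation for $T_h$ (with test function $\eta_1=\varphi$) is, up to an overall sign, the first of these identities, and the row-2 equation (with $\eta_2=\varphi$) is the second. Hence $\dive(T_h)=0$ in $\mathcal{D}'(\Omega)$ is equivalent to $\partial_{\bar z}\omega_h=0$ in $\mathcal{D}'(\Omega)$, i.e.\ to $\omega_h$ being holomorphic by Weyl's lemma. The only non-trivial point is tracking signs in the distributional pairing and confirming that restricting complex test functions to real ones loses no information; both are routine. For smooth $h$ one may alternatively check this identity pointwise via $\omega_h=(2\partial_z h)^2$, obtaining $\partial_{\bar z}\omega_h = 2\partial_z h\,\Delta h$ and $\dive(T_h) = -\Delta h\,\nabla h$, which are proportional to the real/imaginary parts of the same vector.
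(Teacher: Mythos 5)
Your proof is correct and follows essentially the same route as the paper: both reduce $\dive(T_h)=0$ to the distributional Cauchy--Riemann equations $\partial_{\bar z}\omega_h=0$ for $\omega_h$ and then invoke elliptic regularity (Weyl's lemma) to upgrade to genuine holomorphy. You merely spell out the test-function bookkeeping more explicitly than the paper does, and the sign and factor checks you flag do work out.
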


\begin{proof}
$$\dive(T_h)=0 \Leftrightarrow \left \{ 
\begin{array}{rcll}
\partial_x (\partial_xh ^2-\partial_y h ^2)&=& \partial_y(-2\partial_x \partial_yh) \\
\partial_y (\partial_xh ^2-\partial_y h ^2)&=& -\partial_x(-2\partial_x \partial_yh).
\end{array}
\right.$$
These are the Cauchy-Riemann equations for $\omega_h$ written in the sense of distributions. We can rewrite them as $\partial_{\bar{z}}\omega_h=0$ where $\partial_{\bar{z}}=\frac{1}{2}(\partial_x+i\partial_y)$.
The operator $\partial_{\bar{z}}$ is elliptic and hence the elliptic regularity theory shows that $\omega_h$ is smooth and then holomorphic because it satisfies the Cauchy-Riemann equations.
\end{proof}

\begin{proposition}
If $h$ is harmonic in $\Omega$ then $h$ is stationary harmonic in $\Omega$.
\end{proposition}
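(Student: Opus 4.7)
The plan is to exploit Proposition \ref{Hopf}, which reduces stationary harmonicity to the holomorphy of $\omega_h = (\partial_x h)^2 - (\partial_y h)^2 - 2i\partial_x h \partial_y h$. Thus, instead of verifying $\dive(T_h)=0$ directly, I would prove that $\omega_h$ is holomorphic whenever $\Delta h = 0$. The first step is to observe that if $h$ is harmonic then by standard elliptic regularity $h \in C^\infty(\Omega)$, so every quantity below makes classical sense.

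The key identity is $\omega_h = 4(\partial_z h)^2$, where $\partial_z = \tfrac{1}{2}(\partial_x - i\partial_y)$. Indeed,
\[
4(\partial_z h)^2 = (\partial_x h - i\partial_y h)^2 = (\partial_x h)^2 - (\partial_y h)^2 - 2i\partial_x h\partial_y h = \omega_h.
\]
Then I would use the factorization $\Delta = 4\partial_{\bar z}\partial_z$, which gives
\[
\partial_{\bar z}(\partial_z h) = \tfrac{1}{4}\Delta h = 0.
\]
Hence $\partial_z h$ is holomorphic in $\Omega$, and therefore so is its square. By Proposition \ref{Hopf}, $h$ is stationary harmonic.

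As an alternative (which avoids invoking Proposition \ref{Hopf}), I could compute $\dive(T_h)$ directly. Since $h$ is smooth, the first row of $T_h$ yields
\[
\partial_x\!\left[\tfrac{1}{2}((\partial_y h)^2 - (\partial_x h)^2)\right] + \partial_y[-\partial_x h\,\partial_y h] = -\partial_x h\,(\partial_{xx}h + \partial_{yy}h) = -\partial_x h\,\Delta h = 0,
\]
and the analogous computation gives $0$ for the second row. So $\dive(T_h) = 0$ pointwise, hence in the sense of distributions.

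There is essentially no obstacle here: the statement is a one-line consequence of the Wirtinger-operator factorization of the Laplacian, once smoothness of harmonic functions is noted. The only thing to be careful about is that the equivalence with the distributional definition of $\dive(T_h)=0$ requires $T_h \in L^1_{\mathrm{loc}}$, which is automatic since $h \in C^\infty$ implies $\nabla h \in L^\infty_{\mathrm{loc}}$.
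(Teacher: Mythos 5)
Your first argument is essentially identical to the paper's proof: both use the identity $\omega_h = 4(\partial_z h)^2$ together with the factorization $\Delta = 4\partial_{\bar z}\partial_z$ to conclude that $\partial_{\bar z}\omega_h = 0$, and then invoke the holomorphy characterization of stationary harmonicity. The proof is correct; the direct computation of $\dive(T_h)$ you offer as an alternative is also valid but not needed.
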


\begin{proof}
Assume that $\Delta h=0$ in $\Omega$. Recall that $\Delta v =4 \partial_{\bar{z}} \partial_z v$ and let us compute 
\begin{equation}\nonumber
\begin{array}{rclll}
\partial_{\bar{z}} \omega _h&=&4 \partial_{\bar{z}}[\partial_z h]^2 &=& 8\partial_zh \partial_{\bar{z}}\partial_z h \\
&=& 8\partial_zh \Delta h &=&0.
\end{array}
\end{equation}
Hence $\partial_{\bar{z}}\omega_h(z)=0$, that is $\omega_h$ is holomorphic.\\
\end{proof}

The converse of the previous proposition is not true. However if $h$ is a stationary harmonic functions which statisfies the hypothesis \eqref{1}, \eqref{3}, \eqref{2} with $\mu \in L^p$, $p>1$, then, using the same methods as in \cite{SandierSerfaty} Chapter 13, one can show that $h$ is harmonic.

\begin{proposition}\label{regularity}
If $\mu$ is in $L^p(\Omega)$ for some $p>1$ and satisfies \eqref{2},\eqref{3} then $\mu=0$.
\end{proposition}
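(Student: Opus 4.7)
The strategy is to combine the extra regularity that $\mu\in L^p_{loc}$ confers on $h$ with the fact that $\omega_h=4(\partial_z h)^2$ is holomorphic in $\Omega$ (hypothesis \eqref{2}). Morally, differentiating the identity $\partial_{\bar z}\omega_h=0$ should produce a pointwise equation of the form $\partial_z h\cdot\mu=0$, from which $\mu=0$ follows because the zeros of the holomorphic function $\omega_h$ are isolated.

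First I would upgrade the regularity of $h$. Since $\Delta h=\mu\in L^p_{loc}(\Omega)$ with $p>1$, Calder\'on--Zygmund elliptic regularity yields $h\in W^{2,p}_{loc}(\Omega)$. In particular $\partial_z h\in W^{1,p}_{loc}(\Omega)$, and it is also in $L^\infty_{loc}$ because $|\nabla h|^2=|\omega_h|$ is continuous (even real-analytic). This double integrability is exactly what is needed to make the subsequent chain-rule computation rigorous.

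Next I would compute $\partial_{\bar z}(\partial_z h)^2$ in two different ways. On the one hand, hypothesis \eqref{2} says that $(\partial_z h)^2=\omega_h/4$ is holomorphic, so $\partial_{\bar z}(\partial_z h)^2=0$ in the sense of distributions. On the other hand, the Sobolev chain rule (legitimate because $\partial_z h\in W^{1,p}_{loc}\cap L^\infty_{loc}$) gives
\begin{equation*}
\partial_{\bar z}(\partial_z h)^2 = 2\,\partial_z h\cdot \partial_{\bar z}\partial_z h = \tfrac{1}{2}\,\partial_z h\cdot \Delta h = \tfrac{1}{2}\,\partial_z h\cdot \mu
\end{equation*}
as an equality in $L^p_{loc}$. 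Comparing the two yields $\partial_z h\cdot \mu=0$ almost everywhere in $\Omega$.

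To conclude, I would separate two cases. If $\omega_h\equiv 0$ on $\Omega$, then $|\nabla h|=0$, so $h$ is locally constant and $\mu=\Delta h=0$. Otherwise, $\omega_h$ is a nonzero holomorphic function, so by the Proposition stated in the introduction its zero set $Z:=\{z\in\Omega:\omega_h(z)=0\}=\{z:\partial_z h(z)=0\}$ is a discrete, hence Lebesgue-null, subset of $\Omega$. The identity $\partial_z h\cdot\mu=0$ a.e.\ then forces $\mu=0$ almost everywhere on $\Omega\setminus Z$, and since $|Z|=0$ and $\mu\in L^p$ is represented by a measurable function, we obtain $\mu=0$ in $L^p(\Omega)$.

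The only nontrivial step is the chain-rule manipulation of $\partial_{\bar z}(\partial_z h)^2$, which is where the hypothesis $p>1$ enters through $W^{2,p}$-regularity; the rest is a clean consequence of the holomorphicity of $\omega_h$ and the isolation of its zeros.
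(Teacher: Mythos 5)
Your argument is correct and arrives at the same pivotal identity as the paper, namely $\mu\,\partial_z h=0$ a.e.\ (equivalently $\mu\nabla h=0$), but it gets there by a genuinely different route. The paper's proof never upgrades $h$ to $W^{2,p}_{loc}$: it mollifies, uses the identity $\dive(T_{h_n})=-\Delta h_n\,\nabla h_n=-\mu_n\nabla h_n$ for the smooth approximants $h_n=h\ast\rho_n$, and passes to the limit; this requires only $\nabla h\in L^\infty_{loc}$ (from $|\nabla h|^2=|\omega_h|$ with $\omega_h$ holomorphic) together with $\mu_n\to\mu$ in $L^p$, and $p>1$ enters through the H\"older pairing of $\mu_n$ against $\nabla h_n$. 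You instead invoke Calder\'on--Zygmund regularity to get $h\in W^{2,p}_{loc}$ and apply the chain rule to $\partial_{\bar z}(\partial_z h)^2$; this is legitimate since $\partial_z h\in W^{1,p}_{loc}\cap L^\infty_{loc}$, and it is really the complex-notation form of the same algebraic identity $\dive(T_h)=-\Delta h\,\nabla h$, with the justification shifted from an explicit limiting argument to a standard Sobolev product rule. The endgames also differ: the paper disposes of the critical set $\{\nabla h=0\}$ via the Sobolev-function property that $\Delta h=0$ a.e.\ where $\nabla h=0$, whereas you observe that, unless $\omega_h\equiv 0$ (a trivial case), this set is the zero set of a nonzero holomorphic function, hence discrete and Lebesgue-null --- a slightly cleaner finish that reuses the isolated-zeros proposition from the introduction and avoids the extra lemma. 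The only blemish is the parenthetical claim that $|\nabla h|^2=|\omega_h|$ is real-analytic: it is continuous, but fails to be real-analytic at the zeros of $\omega_h$; nothing in your argument uses this.
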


\begin{proof}
Let $\mu$ be in $L^p(\Omega)$ for some $p>1$ and such that $\dive(\T)=0$ and $\Delta \h=\mu$. 
Let $\rho_n$ be a regularizing kernel, we set $\mu_n:=\mu \ast \rho_n$, $h_n:= \h \ast \rho_n$ and 
$$T_n:= \frac{1}{2}\begin{pmatrix}
\partial_yh_n^2-\partial_xh_n^2 & -2\partial_xh_n\partial_yh_n \\
-2\partial_xh_n\partial_yh_n & \partial_yh_n^2-\partial_xh_n^2
\end{pmatrix}.$$
One has $\mu_n \rightarrow \mu$ in $L^p(\Omega)$, and because $\nabla \h$ is in $L_{loc}^\infty(\Omega)$ one also has $\nabla h_n \rightarrow \nabla \h$ in $L_{loc}^q(\Omega)$, for all $q \in [1,+\infty[$. Then
$$\mu_n\nabla h_n \rightarrow \mu \nabla \h, \ \text{in} \ L_{loc}^1(\Omega)$$
and
$$T_n \rightarrow T_\mu, \ \text{in} \ L_{loc}^1(\Omega).$$
The last equation implies that $\dive(T_n) \rightarrow \dive(T_\mu)=0$ and $\mu_n\nabla h_n \rightarrow \mu \nabla \h $ in the sense of distributions. However $\dive(T_n)=-\Delta h_n \nabla h_n=\mu_n\nabla h_n$ thus 
$\mu\nabla \h=\lim_{n\rightarrow +\infty} \dive(T_n)=0$ in $L^1_{loc}(\Omega)$ and almost everywhere.
From a  property of Sobolev functions we have $\Delta \h=0$ a.e. on the set $F=\{\nabla\h=0\}$, thus $\mu=0$ a.e. on $F$ and $\mu=0$ on $\Omega \setminus F$ hence $\mu=0$ on $\Omega$.
\end{proof}
\bigskip
\footnotesize
\noindent\textit{Acknowledgments.} I would like to thank my Ph.D. advisor E.Sandier for his support and useful comments on this subject. I would also like to thank J.M. Delort and S.Masnou for interesting discussions on this work, and X.Lamy for valuable comments which helped to improve the presentation of the paper.

\nocite*

\bibliographystyle{plain}
\bibliography{biblioreg}

\end{document}